\newcommand\ZZ{\mathbb{Z}}
\newcommand\NN{\mathbb{N}}
\newcommand\RR{\mathbb{R}}
\newcommand\QQ{\mathbb{Q}}
\newcommand\PP{\mathbb{P}} 
\newcommand{\A}{\mathbb{A}} 
\newcommand\GG{\mathbb{G}} 
\newcommand{\OO}{\mathcal{O}} 
\newcommand{\et}{{\text{\it\'et}}}
\DeclareMathOperator{\sing}{Sing} 
\DeclareMathOperator{\spec}{Spec} 
\DeclareMathOperator{\nr}{nr} 
\DeclareMathOperator{\pic}{Pic} 
\DeclareMathOperator{\Aut}{\bf{Aut}} 
\DeclareMathOperator{\aut}{Aut} 
\DeclareMathOperator{\gal}{Gal} 
\DeclareMathOperator{\Br}{Br} 
\DeclareMathOperator{\PGL}{PGL} 
\DeclareMathOperator{\GL}{GL} 
\DeclareMathOperator{\Gr}{Gr} 
\DeclareMathOperator{\ortgrass}{OGr} 
\DeclareMathOperator{\ortgrassfield}{Y} 
\DeclareMathOperator{\rk}{rk} 
\newtheorem{theorem}{Theorem}
\newtheorem{lemma}[theorem]{Lemma}
\newtheorem{prop}[theorem]{Proposition}
\newtheorem{cor}[theorem]{Corollary}
\theoremstyle{definition}
\newtheorem{defin}[theorem]{Definition}
\newtheorem{remark}[theorem]{Remark}
\newtheorem{notation}[theorem]{Notation}
\newtheorem{conjecture}[theorem]{Conjecture}
\numberwithin{theorem}{section}
\numberwithin{equation}{section}
\keywords{Rationally connected varieties, rational points, Fano varieties, $C_1$ fields}
\subjclass[2010]{14M22 (14G05, 14J45)}
\begin{document}

\setcounter{tocdepth}{1}

\title[On rationally connected varieties over $C_1$ fields of char. 0]{On rationally connected varieties over $C_1$ fields of characteristic 0}

\address{Mathematical Institute,
Utrecht University,
Budapestlaan 6,
3584 CD Utrecht, The Netherlands; EPFL SB MATH CAG, B\^at. MA, Station 8, 1015 Lausanne, Switzerland}
  
\author{Marta Pieropan} \email{m.pieropan@uu.nl}

\date{\today}

\begin{abstract}
We use birational geometry to show that the existence of rational points on proper rationally connected varieties over fields of characteristic 0 is a consequence of the existence of rational points on terminal Fano varieties. We discuss several consequences of this result, especially in relation to the $C_1$-conjecture. We also provide evidence that supports the conjecture in dimension 3 for $C_1$ fields of characteristic 0.
\end{abstract}

\maketitle

\tableofcontents

\section{Introduction}
A field $k$ is called $C_1$ (or quasi algebraically closed) 
if every hypersurface of degree at most $n$ in $\PP^n_k$ has a $k$-rational point.
Quasi algebraically closed fields were introduced by Artin and first extensively studied by Tsen (see \cite{MR1743370}) 
and Lang \cite{MR0046388}.
Smooth hypersurfaces of degree at most $n$ in $\PP^n$ are Fano and rationally chain connected \cite{MR1189503, MR1191735}. In characteristic 0 they are also rationally connected. Hence, it is natural to study rational points on rationally (chain) connected varieties over $C_1$ fields. 
In an unpublished paper of 2000 Lang formulated the following conjecture, also known as $C_1$-conjecture.
\begin{conjecture}[Lang, 2000]\label{conj:C_1}  
Every smooth proper separably rationally connected variety over a $C_1$ field has a rational point.
\end{conjecture}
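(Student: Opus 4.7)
The plan is to prove the conjecture by induction on $\dim X$, using the Minimal Model Program in characteristic zero to reduce from arbitrary smooth proper rationally connected varieties to Fano varieties with terminal singularities. Because rational connectedness of smooth proper varieties is birationally invariant in characteristic zero, we may replace $X$ by any convenient birational model; combined with Nishimura's lemma and the smoothness of the original $X$, a rational point on a suitably chosen birational model will transfer back to $X$ itself.

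Given $X$ smooth, proper, and rationally connected over a $C_1$ field $k$ of characteristic zero, uniruledness together with the theorem of Boucksom--Demailly--Paun--Peternell gives that $K_X$ is not pseudo-effective, so running a $K_X$-MMP terminates with a Mori fiber space $\pi \colon X' \to Y$. Here $X'$ is $\QQ$-factorial with terminal singularities and is birational to $X$, while the base $Y$ is projective, rationally connected, and of strictly smaller dimension. By the inductive hypothesis applied to a smooth birational model of $Y$ (which exists by resolution of singularities and remains rationally connected), one obtains a $k$-point $y \in Y(k)$. The fiber $X'_y$ is then a Fano variety with terminal singularities defined over $k$, and a rational point on $X'_y$ gives a rational point of $X'$, which transports back to a rational point of $X$ via Nishimura applied to a common resolution.

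The main obstacle is that this reduction produces \emph{terminal}, not smooth, Fano varieties, so one is forced to establish a slightly stronger statement than the original conjecture: every terminal Fano variety over a $C_1$ field of characteristic zero admits a rational point. This residual Fano case is itself genuinely open in general dimension. Secondary but substantial difficulties include: controlling the geometry of the fiber $X'_y$ over arbitrary $k$-rational points of $Y$ (the generic fiber is of no direct use, since $C_1$-ness does not descend to the function field $k(Y)$); transferring rational points through flips, which are small modifications along which naive lifting fails; and handling possible singularities of $Y$ that arise from the induction. In low dimensions these obstacles are tractable through the explicit classification of terminal Fano threefolds, which is presumably what makes concrete progress on the $C_1$-conjecture possible in dimension three.
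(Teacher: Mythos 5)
The statement you were asked about is a conjecture; the paper does not prove it, and neither do you. What you have written is an outline of the reduction that constitutes the paper's Theorem \ref{thm:equivalent_formulation}, together with an honest acknowledgement that the residual case --- terminal Fano varieties over $C_1$ fields --- remains open (this is exactly the paper's Conjecture \ref{conj:terminal_Fano_final}, for which the paper then supplies partial evidence in dimension $3$ and in large index). So as a proof of the conjecture your proposal cannot close, but as a reduction it is the same strategy as the paper: run an MMP on a projective model, induct on dimension over the base of the resulting Mori fiber space, and transfer points back through the birational modifications via the Lang--Nishimura lemma.

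Within that reduction there are two concrete gaps that the paper fills with a tool you do not invoke, namely the Hogadi--Xu theorem on degenerations of rationally connected varieties \cite[Theorems 1.2 and 1.3]{MR2491906}. First, your main argument asserts that the fiber $X'_y$ over an arbitrary $k$-point $y\in Y(k)$ is a terminal Fano variety; this is only guaranteed for the generic fiber, and special fibers can degenerate badly (you flag this in your list of obstacles but leave it unresolved). The paper instead uses \cite[Theorem 1.2]{MR2491906} to produce a \emph{projective rationally connected $k$-subvariety} of the fiber over $y$, which has dimension $<\dim X$ and is therefore handled by the induction hypothesis --- no Fano structure on the special fiber is needed. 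Second, ``Nishimura applied to a common resolution'' does not suffice to move a point from $X'$ back to $X$: the point you produce on $X'$ may be singular, and the Lang--Nishimura lemma requires smoothness at the point to lift along the resolution $X''\to X'$. The paper again applies \cite[Theorem 1.3]{MR2491906} to the resolution, finding a rationally connected subvariety of smaller dimension in the fiber over the singular point, getting a point on $X''$ by induction, and only then invoking Lang--Nishimura between the smooth proper birational varieties $X''$ and $X$. The flips you worry about are not the issue; the singular point-lifting is. With these two applications of the degeneration theorem your sketch becomes the paper's proof of Theorem \ref{thm:equivalent_formulation}, but the conjecture itself still rests on the unproven Fano case.
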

At the time it was known that smooth proper rational curves and surfaces have points over $C_1$ fields \cite{MR934267}. 
Soon after it was formulated, the conjecture has been proven to hold for the following $C_1$ fields: finite fields \cite{MR1943746}, function fields of curves over algebraically closed fields \cite{MR1937041, MR1937199, MR1981034}, fields of formal power series over algebraically closed fields \cite{MR2757627}. 
It is still open for the maximal unramified extensions $\QQ_p^{\nr}$ of $p$-adic fields.
Positive evidence in support of the conjecture for the latter class of fields is given by \cite{MR3760308}, which proves that for sufficiently large primes $p$  smooth projective rationally connected varieties over $\QQ_p^{\nr}$ with fixed Hilbert polynomial have a rational point. 
As a consequence of our first main result  we manage to improve \cite{MR3760308}: we replace the dependence on the Hilbert polynomial by dependence only on the dimension, and we remove the smoothness and properness assumptions.

The goal of this paper is to use birational geometry in characteristic 0 to reduce the $C_1$-conjecture to the case of Fano varieties and to provide evidence for the existence of rational points on rationally connected varieties of dimension $3$. Along the way, we also show that
for proper rationally connected varieties of fixed dimension over an arbitrary field of characteristic 0 there is a uniform upper bound for the  degree of the minimal field extensions where the set of rational points becomes nonempty. 

\subsection{Birational geometry}
The first main result of this paper reduces the $C_1$-conjecture for  $C_1$ fields of characteristic 0 to the following conjecture.
\begin{conjecture}\label{conj:terminal_Fano_final}
Every terminal 
$\QQ$-factorial 
Fano variety of Picard rank 1 over a  $C_1$ field of characteristic 0 has a rational point. 
\end{conjecture}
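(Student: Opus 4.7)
The only $C_1$ fields of characteristic $0$ for which Conjecture~\ref{conj:C_1} is not yet established by the works cited in the introduction are the maximal unramified extensions $K = \QQ_p^{\nr}$ of $p$-adic fields. Over function fields of curves over algebraically closed fields and over Laurent series $k((t))$ with $k$ algebraically closed, Conjecture~\ref{conj:C_1} holds; since a terminal $\QQ$-factorial Fano variety of Picard rank $1$ in characteristic $0$ is rationally connected (Koll\'ar--Miyaoka--Mori in the smooth case, Hacon--McKernan and Zhang in the singular case), the statement over those fields follows at once by applying the known $C_1$-conjecture to a resolution of singularities of $X$. I would therefore concentrate the remainder of the proof on $K = \QQ_p^{\nr}$.

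For this case, write $R$ for the valuation ring of $K$ and $k = \overline{\FF_p}$ for its residue field, which is itself $C_1$. The broad strategy is to spread $X$ out to a flat projective model $\mathcal{X}/R$ and then proceed in three steps. First, apply the relative minimal model program in characteristic $0$, together with semistable reduction after a finite base extension, to replace $\mathcal{X}$ by a model whose special fiber $\mathcal{X}_k$ is geometrically integral with at worst terminal singularities. Second, exploit the fact that $\mathcal{X}_k$ is then rationally chain connected, together with an Esnault-type coniveau argument over the $C_1$ field $k$, to produce a $k$-rational point of $\mathcal{X}_k$. Third, show that this $k$-point can be chosen in the smooth locus of $\mathcal{X}_k$, and invoke Hensel's lemma to lift it to a $K$-point of $\mathcal{X}$, hence to an element of $X(K)$.

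The principal obstacle is the combination of the first and third steps. $\QQ$-factoriality and Picard rank $1$ need not be preserved when passing to integral models or making base changes, and controlling the singularities and the number of irreducible components of $\mathcal{X}_k$ after MMP and semistable reduction is delicate. Moreover, Esnault's coniveau argument produces a $k$-point but not necessarily a smooth one, and deforming such a point along a rational curve until it reaches the smooth locus requires a quantitative bend-and-break statement in positive characteristic adapted to singular Fano varieties. The Picard rank $1$ hypothesis on $X$ should play a decisive role here, because numerically it forces every curve on $X$ to meet every ample divisor; turning this numerical fact into a genuine smooth $k$-point that lifts to $X(K)$ is where I expect the bulk of the technical work to lie.
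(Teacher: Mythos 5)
This statement is a \emph{conjecture} in the paper, not a theorem: the paper never proves it. What the paper does prove (Theorem \ref{thm:equivalent_formulation}) is that Conjecture \ref{conj:terminal_Fano_final} is \emph{equivalent} to the $C_1$-conjecture in each dimension over a field of characteristic $0$, together with partial results (large index, Gorenstein terminal threefolds outside two exceptional families, etc.). So there is no proof in the paper to compare yours against, and any complete argument would be a substantial new result. Your proposal does not supply one, for two reasons.

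First, the opening reduction is not valid. There is no classification of $C_1$ fields of characteristic $0$, so you cannot enumerate them as ``function fields of curves, Laurent series fields, and $\QQ_p^{\nr}$'' and handle each case. Algebraic extensions of any $C_1$ field are again $C_1$ (Remark \ref{rem:brauer}), pseudo-algebraically closed fields and many other constructions produce further examples, and the conjecture as stated quantifies over \emph{all} $C_1$ fields of characteristic $0$. The part of your argument that does work --- over function fields of curves and over $k((t))$, combine rational connectedness of terminal Fanos \cite{MR2208131} with resolution of singularities and the Nishimura lemma \cite{MR0095851} --- only covers known examples; it is essentially the content of the known cases listed in the introduction. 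Second, for $K=\QQ_p^{\nr}$ your three-step plan is precisely the Duesler--Knecht strategy that the paper's Corollary \ref{cor:duesler-knecht_enhanced} refines, and it is known to succeed only for $p$ outside a finite set depending on the dimension. The steps you flag as ``the principal obstacle'' --- controlling the special fiber after relative MMP and semistable reduction, producing a point in the smooth locus, and lifting --- are not technical loose ends but the open problem itself; note also that the residue field of $\QQ_p^{\nr}$ is $\overline{\FF}_p$, which is algebraically closed, so an ``Esnault-type coniveau argument over the $C_1$ residue field'' produces points trivially and gives no leverage on smoothness or liftability. As written, the proposal is a research program, not a proof, and the statement remains open.
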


More precisely,  birational geometry and induction on dimension are used to prove the following statement. 

\begin{theorem}\label{thm:equivalent_formulation}
Let $k$ be a field of characteristic 0. For every positive integer $n$ the following statements are equivalent:
\begin{enumerate}[label=(\roman*), ref=(\roman*)]
\item Every smooth proper rationally connected $k$-variety of dimension $\leq n$ has a $k$-point. \label{item:thm:equiv:ratconn}
\item Every terminal  $\QQ$-factorial  Fano variety of dimension $\leq n$ and Picard rank 1 over $k$ has a $k$-point. \label{item:thm:equiv:fano}
\end{enumerate}
\end{theorem}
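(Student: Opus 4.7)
My plan is to prove the two implications separately, relying on the minimal model program in characteristic $0$ and on the fact that Fano varieties in characteristic $0$ are rationally connected (Zhang, Hacon--McKernan).

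For the implication (i)$\Rightarrow$(ii), I would take a terminal $\QQ$-factorial Fano variety $X$ of Picard rank $1$ and dimension at most $n$, apply Hironaka's theorem to get a resolution $\pi\colon\tilde X\to X$ with $\tilde X$ smooth and projective, and observe that $\tilde X$ is smooth, proper, and rationally connected of dimension at most $n$: $X$ is rationally connected because it is Fano in characteristic $0$, and rational connectedness is a birational invariant among smooth proper varieties. Hypothesis (i) then provides $\tilde X(k)\neq\emptyset$, and the image under $\pi$ gives a $k$-point of $X$.

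For the converse (ii)$\Rightarrow$(i), I would induct on $n$. The case $n=1$ is immediate because every smooth proper rationally connected $k$-curve is a Brauer--Severi variety, hence a terminal $\QQ$-factorial Fano of Picard rank $1$ and dimension $1$. For the inductive step, I take $X$ smooth proper rationally connected of dimension $n$ and run the $K_X$-MMP; since $K_X$ is not pseudo-effective, the MMP terminates in a birational map $X\dashrightarrow X'$ with $X'$ terminal $\QQ$-factorial admitting a Mori fiber structure $f\colon X'\to Y$ of relative Picard rank one. If $\dim Y=0$, then $X'$ is itself a terminal $\QQ$-factorial Fano of Picard rank $1$ and dimension $n$, and (ii) gives $X'(k)\neq\emptyset$. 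If $\dim Y\geq 1$, then $Y$ is proper rationally connected of dimension $<n$ (image of a rationally connected variety), a resolution of $Y$ is smooth proper rationally connected, the inductive hypothesis yields $y\in Y(k)$, the fiber $X'_y$ is a proper $k$-variety of dimension $<n$ whose components are rationally chain connected in characteristic $0$, and by choosing a geometrically integral $k$-component, resolving it, and applying the inductive hypothesis again one produces a $k$-point in $X'_y\subset X'$.

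In both sub-cases one is left with a $k$-point of the possibly singular $X'$ and must transfer it to the smooth $X$. A common resolution $Z$ of $X$ and $X'$ reduces this problem, via Nishimura's lemma applied to $Z\to X$, to producing a $k$-point of $Z$; equivalently, one needs a $k$-point of $X'$ that lies in the open locus where $Z$ and $X'$ are isomorphic. This is the step I expect to be the main obstacle: the $k$-point supplied by (ii) may land on a terminal singularity of $X'$ whose exceptional fibre in $Z$ has no a priori $k$-point. I anticipate handling it either by invoking that the exceptional fibres over terminal singularities in characteristic $0$ are themselves rationally connected, so the inductive hypothesis applies to them, or by showing that $X'(k)$ is Zariski dense once non-empty, thereby allowing one to choose the $k$-point in the smooth locus of $X'$, from which the lift to $Z$ is automatic.
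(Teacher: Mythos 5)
Your overall strategy --- resolve and apply Lang--Nishimura for (i)$\Rightarrow$(ii), and for (ii)$\Rightarrow$(i) run the MMP to a Mori fiber space, induct on dimension, and transfer points back through a resolution --- is exactly the paper's. The first of your two proposed fixes for the transfer step is also the one the paper uses: by a theorem of Hogadi and Xu, the fiber of a resolution $X''\to X'$ over a $k$-point of the terminal variety $X'$ contains a geometrically irreducible, proper, rationally connected $k$-subvariety of dimension $<n$, to which the inductive hypothesis applies (after extending it from smooth proper to arbitrary proper rationally connected varieties by resolving an affine chart and invoking Lang--Nishimura, as in the paper's Lemma~\ref{lem:conj_sing}). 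Your alternative fix via Zariski density of $X'(k)$ should be discarded: density of rational points on rationally connected varieties over such fields is open, so that branch cannot be made to work.

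The genuine gap is in your treatment of the fiber $X'_y$ of the Mori fibration over $y\in Y(k)$ when $\dim Y\geq 1$. You assert that its components are rationally chain connected in characteristic $0$ and that one can pick a geometrically integral $k$-component. Neither claim holds in general: the geometric components of a degenerate fiber of a Fano fibration need not be rationally (chain) connected, and the components may be permuted by the Galois action, so no single component need be defined over $k$. What is true --- and this is the deep Hogadi--Xu theorem on degenerations of rationally connected varieties, the same source as for the exceptional fibers of the resolution --- is that $X'_y$ contains a projective, geometrically irreducible, rationally connected subvariety \emph{defined over $k$}; that statement is exactly what closes the induction and cannot be replaced by an elementary argument about components. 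With that substitution your argument coincides with the paper's proof.
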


The proof of Theorem \ref{thm:equivalent_formulation} rests upon the birational invariance of rational connectedness and the fact that the existence of rational points is a birational invariant among proper smooth varieties \cite{MR0095851}. We recall that the use of the Minimal Model Program in dimension $\geq3$ produces singular birational models in general. Results on degenerations of rationally connected varieties \cite{MR2491906} and induction on dimension are used to transfer the existence of rational points among birationally equivalent varieties with mild singularities. The restriction on the characteristic is due to the use of the Minimal Model Program, of resolution of singularities and of \cite{MR2491906}. None of these are known to hold in positive characteristic, except for some results in low dimension. 

Boundedness of terminal Fano varieties \cite{MR4224714} assures that there are only finitely many deformation families of terminal Fano varieties in each fixed dimension over algebraically closed fields of characteristic 0.
As an application of this fact we prove the following corollary of Theorem \ref{thm:equivalent_formulation}, which gives a generalization
of \cite{MR3760308}.
 
\begin{cor}\label{cor:duesler-knecht_enhanced}
For every positive integer $n$ there exists a finite set $S(n)$ of prime numbers such that for all prime numbers $p\notin S(n)$, every 
rationally connected variety of dimension $n$ over $\QQ_p^{\nr}$ has a $\QQ_p^{\nr}$-point.
\end{cor}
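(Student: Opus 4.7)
The strategy combines Theorem \ref{thm:equivalent_formulation} with Birkar's boundedness of terminal Fano varieties and a reduction-modulo-$p$ argument.

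First, I reduce to the case of smooth proper rationally connected varieties. For a rationally connected $X/\QQ_p^{nr}$ of dimension $n$, resolution of singularities and compactification in characteristic $0$ yield a smooth proper birational model $\tilde X$, which remains rationally connected. Since $\QQ_p^{nr}$ is Henselian, it is large in Pop's sense, so rational points on the smooth proper variety $\tilde X$ are Zariski dense whenever the set is non-empty; some point of $\tilde X(\QQ_p^{nr})$ therefore lies in the open locus where $\tilde X \dashrightarrow X$ is a morphism and descends to a point of $X(\QQ_p^{nr})$. By Theorem \ref{thm:equivalent_formulation} applied to $\QQ_p^{nr}$, it then suffices to produce a finite set $S(n)$ of primes such that, for every $p \notin S(n)$, every terminal $\QQ$-factorial Fano variety of Picard rank $1$ and dimension $\leq n$ over $\QQ_p^{nr}$ has a $\QQ_p^{nr}$-point.

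By Birkar's theorem \cite{arXiv:1609.05543v1}, the terminal $\QQ$-factorial Fano varieties of Picard rank $1$ and dimension $\leq n$ over algebraically closed fields of characteristic $0$ form a bounded family: finitely many flat projective families $\mathcal{X}_i \to B_i$ over finite-type $\QQ$-schemes whose geometric fibers realize all such Fano varieties up to isomorphism. After choosing projective compactifications $B_i \subset \bar B_i$ and spreading out, I may assume that each family is defined over $\spec \ZZ[1/N_n]$ for a suitable integer $N_n$. Define $S(n)$ to consist of the primes dividing $N_n$, augmented by finitely many additional bad primes encountered below.

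Now fix $p \notin S(n)$ and let $X/\QQ_p^{nr}$ be terminal $\QQ$-factorial Fano of Picard rank $1$ and dimension $\leq n$. Write $\mathcal O$ for the valuation ring of $\QQ_p^{nr}$; it is Henselian with residue field $\bar{\FF}_p$. The variety $X$ is classified by a $\QQ_p^{nr}$-point of some $B_i$, which by the valuative criterion applied to the projective $\ZZ[1/N_n]$-scheme $\bar B_i$ extends to a map $\spec \mathcal O \to \bar B_i$. The corresponding specialization produces a projective $\bar{\FF}_p$-scheme $X_0$ of dimension $n$ sitting in a flat proper model $\mathcal X / \mathcal O$ of $X$. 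Boundedness guarantees that, after absorbing finitely many more primes into $S(n)$, the specialization $X_0$ is generically smooth; since $\bar{\FF}_p$ is algebraically closed, $X_0$ thus admits a smooth $\bar{\FF}_p$-point, which Hensel's lemma lifts along $\mathcal X \to \spec \mathcal O$ to a $\QQ_p^{nr}$-point of $X$. The main obstacle is exactly this final step: one must ensure that after spreading out and specializing, $X_0$ retains enough of the mild-singularity structure of the generic fiber to possess a smooth $\bar{\FF}_p$-point. Birkar's boundedness is precisely what makes this control uniform in terms of $n$ alone, so that the enlargement of $S(n)$ needed here remains finite.
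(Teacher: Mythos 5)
Your overall architecture (reduce to smooth proper via largeness of $\QQ_p^{nr}$, then to terminal Fano varieties via Theorem \ref{thm:equivalent_formulation}, then invoke boundedness) matches the paper, but the step where you actually produce points on the Fano varieties has a genuine gap. You classify $X$ by a $\QQ_p^{nr}$-point of $B_i$ and extend it to an $\mathcal{O}$-point of a compactification $\bar B_i$; but the family $\mathcal{X}_i\to B_i$ does not live over $\bar B_i$, and the special fiber of that $\mathcal{O}$-point may well land in $\bar B_i\smallsetminus B_i$. This is exactly the situation of bad reduction, and it cannot be excluded by enlarging $S(n)$: for a \emph{fixed} prime $p\notin S(n)$ there are terminal Fanos over $\QQ_p^{nr}$ (already smooth cubic surfaces) whose flat proper models over $\mathcal{O}$ have non-reduced or badly singular special fibers with no point at which the model is smooth over $\mathcal{O}$. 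Birkar's boundedness controls the characteristic-$0$ fibers of finitely many families; it says nothing about the reduction type of an individual member at an individual prime, so ``after absorbing finitely many more primes, $X_0$ is generically smooth'' is not justified, and the Hensel lifting at the end does not go through. (Note that even the classical fact that a smooth cubic surface over $\QQ_p^{nr}$ has a point is \emph{not} proved by naive reduction plus Hensel; Lang's $C_1$ theorem requires a more delicate successive-approximation argument.)

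The paper sidesteps this entirely. It uses Birkar's boundedness only to bound the possible Hilbert polynomials of anticanonically embedded terminal Fanos of dimension $n$ (Proposition \ref{prop:boundedness}), and then, for each of the finitely many Hilbert polynomials, invokes Proposition \ref{prop:logic}, which is the Duesler--Knecht theorem upgraded from rational chain connectedness to rational connectedness. That proposition is proved by a model-theoretic transfer (Ax--Kochen--Ershov/ultraproducts) from $\QQ_p^{nr}$ to $\overline{\FF}_p((t))$ for almost all $p$, where the $C_1$-conjecture is known by Hogadi--Xu; no smooth point of a special fiber is ever needed. To repair your proof you would have to replace the reduction-and-Hensel step by such a transfer argument (or by some other mechanism that handles bad reduction), which is the actual content of Proposition \ref{prop:logic}.
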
 

During a workshop in Edinburgh in November 2018, it was brought to the author's attention that the strategy of the proof of Theorem \ref{thm:equivalent_formulation} is similar to the one used in the paper \cite{MR3483470}, which proves, among other things, a uniform bound depending only on dimension for the indices of subgroups of finite groups of  automorphisms such that the subgroup acts with fixed points. The 
corresponding statement for rational points is the following theorem, which is proven by combining the proof of Theorem \ref{thm:equivalent_formulation}  with boundedness of terminal Fano varieties. 
 
 \begin{theorem}\label{thm:extension_points}
 For every positive integer $n$ there exists a positive integer $d(n)$ such that for every field $k$ of characteristic 0 and every proper rationally connected $k$-variety $X$ of dimension $n$ there exists a field extension $k'$ of $k$ of degree $\leq d(n)$ such that $X(k')\neq\emptyset$. 
 \end{theorem}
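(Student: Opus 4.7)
The plan is to combine the inductive, MMP-based scheme underlying the proof of Theorem~\ref{thm:equivalent_formulation} with the boundedness of terminal Fano varieties \cite{arXiv:1609.05543v1}, in order to convert the qualitative existence of rational points in the Fano base case into an effective bound on the degree of a splitting extension. I would argue by induction on $n$, with the base case $n=1$ handled by the fact that a smooth proper rationally connected $k$-curve is geometrically $\PP^1$, hence a Brauer--Severi curve of index dividing $2$, so $d(1)=2$ works.

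For the inductive step, let $X$ be a proper rationally connected $k$-variety of dimension $n$. By resolution of singularities in characteristic $0$ and the Lang--Nishimura lemma \cite{MR0095851}, we may assume $X$ smooth and proper. Running the MMP as in the proof of Theorem~\ref{thm:equivalent_formulation} produces a birational model that is either a terminal $\QQ$-factorial Fano variety of Picard rank $1$ and dimension $\leq n$, or a Mori fiber space $Y\to Z$ with $0<\dim Z<\dim Y\leq n$ whose base $Z$ is proper rationally connected of dimension $<n$. In the Mori fiber case the inductive hypothesis gives $k'/k$ of degree $\leq d(\dim Z)$ and a point $z\in Z(k')$; the degeneration result \cite{MR2491906}, combined with a Galois-orbit descent for the rationally connected components of the fiber $Y_z$, produces a proper rationally connected variety of dimension $\leq n-\dim Z$ over a controlled extension of $k'$, and a further application of the inductive hypothesis yields a splitting extension of $k$ of degree at most a bounded multiple of $d(\dim Z)\cdot d(n-\dim Z)$.

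For the terminal Fano case, boundedness \cite{arXiv:1609.05543v1} provides integers $m(n)$, $N(n)$, $D(n)$, depending only on $n$, such that every terminal $\QQ$-factorial Fano variety $Y$ of Picard rank $1$ and dimension $\leq n$ over a field of characteristic $0$ admits a closed embedding $Y\hookrightarrow\PP^{N(n)}$ via $\lvert -m(n)K_Y\rvert$ with image of degree $\leq D(n)$; intersecting with $\dim Y$ sufficiently general $k$-rational hyperplanes then yields a closed point of $Y$ of degree $\leq D(n)$. Setting $d(n):=\max\{D(n),\,\max_{0<j<n} C\cdot d(j)\cdot d(n-j)\}$, with $C$ absorbing the Galois-descent contribution, gives the desired uniform bound. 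The explicit values $d(2)=6$ and $d(3)\leq 216$ would follow from a finer analysis using the classification of minimal rational surfaces in dimension $2$ and of terminal $\QQ$-factorial Fano $3$-folds of Picard rank $1$. The principal technical obstacle is the Galois-orbit descent in the Mori fiber step: the rationally connected component of $Y_z$ produced by \cite{MR2491906} is a priori only defined over $\overline{k'}$, and must be brought down to an explicit extension of $k'$ of bounded degree in order to feed back into the induction; the uniformity provided by \cite{arXiv:1609.05543v1} is what ultimately allows one to close the recursion with a single constant at each dimension.
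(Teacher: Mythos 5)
Your overall scheme --- induction on dimension, the MMP decomposition of Proposition \ref{prop:building_block}, Birkar's boundedness plus Bertini for the terminal Fano base case, and recursion through the Mori fiber space --- is the same as the paper's. But there are two concrete problems. First, your recursion $d(n):=\max\{D(n),\max_{0<j<n}C\,d(j)\,d(n-j)\}$ omits the step of transporting the rational point from the MMP model $X'$ back to $X$. The model $X'$ is in general singular, and the Lang--Nishimura lemma \cite{MR0095851} transfers rational points only between \emph{smooth} proper birational models, so a $k'$-point on $X'$ does not directly give one on $X$. The paper takes a resolution $X''\to X'$, applies \cite[Theorem 1.3]{MR2491906} to the fiber of the resolution over the $k'$-point to obtain a proper rationally connected variety of dimension $<n$ over $k'$, and invokes the inductive hypothesis once more; this is what produces the leading factor $d(n-1)$ in the paper's recursion $d(n)\le d(n-1)\max\{d'(n),d(n-1)^2\}$, and it is needed even in the Fano branch ($\dim Y=0$), where your formula would return only $D(n)$. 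Conversely, the ``principal technical obstacle'' you single out is not one: \cite[Theorem 1.2]{MR2491906} already asserts that the fiber over a $k'$-point contains a rationally connected subvariety \emph{defined over $k'$}, so no Galois-orbit descent and no extra constant $C$ are required at that step.

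Second, the statement includes the explicit values $d(1)=2$, $d(2)=6$, $d(3)\le216$, and your proposal only asserts that these ``would follow from a finer analysis.'' In the paper this is the bulk of the argument: the crude recursion gives only $d(2)\le12$, and $d(2)=6$ requires the Enriques--Iskovskikh--Manin classification together with a case-by-case treatment of conic bundles and del Pezzo surfaces of each degree; $d(3)\le216=6\cdot36$ requires proving $d'(3)\le36$ by a lengthy analysis of terminal Fano threefolds of Picard rank $1$ (bounds on the number of non-Gorenstein and singular points, base loci of anticanonical systems, forms of $\PP^1\times\PP^1\times\PP^1$ and of $(1,1)$-divisors in $\PP^2\times\PP^2$, and the degree bounds from the classification of smooth Fano threefolds). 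None of this is supplied, so the quantitative part of the theorem remains unproved in your write-up.
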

  For every positive integer $n$, let $\tilde d(n)$ be the smallest positive integer such that  for every field $k$ of characteristic 0 and every proper rationally connected $k$-variety $X$ of dimension $n$ there exists a field extension $k'$ of $k$ of degree $\leq \tilde d(n)$ such that $X(k')\neq\emptyset$. We prove the following effective bounds.
 \begin{theorem}\label{thm:extension_points_effective}
 We have
 $\tilde d(1)=2$, $\tilde d(2)=6$, $\tilde d(3)\leq 144$.
 \end{theorem}
 The proof of Theorem \ref{thm:extension_points} gives a bound  $$\tilde d(n) \leq \tilde d(n-1)\max\{d'(n), \max_{1\leq m\leq n-1}\tilde d(m)\tilde d(n-m)\},$$ where $d'(n)$ is an upper bound for the degree of the minimal field extensions where terminal Fano varieties of dimension $n$ acquire rational points. The existence of $d'(n)$ is a consequence of boundedness of terminal Fano varieties. The effective computability of $d'(n)$ depends on the classification of the Fano varieties that appear in  Theorem \ref{thm:equivalent_formulation}\ref{item:thm:equiv:fano}. In dimension $4$ or higher, the classification is not complete. 
In dimension $2$ the inequality only gives an upper bound $\tilde d(2)\leq 12$. The sharp bound $\tilde d(2)=6$ is obtained in Section \ref{sec:dP_classification} using the Enriques--Iskovskih--Manin classification of surfaces over nonclosed fields \cite{MR525940}. In Section \ref{sec:fano_3folds_existence} we recall the classification of the terminal Fano threefolds that appear in Theorem \ref{thm:equivalent_formulation}\ref{item:thm:equiv:fano}, and we prove that $d'(3)\leq 24$. We do not expect the bound for $\tilde d(3)$ to be sharp.

\subsection{Classification of Fano varieties}\label{intro:sec:classification}
Further evidence for Conjecture \ref{conj:C_1} that can be found in the literature is given by the fact that the following rationally connected varieties have rational points over $C_1$ fields 
that admit normic forms of arbitrary degree:
 complete intersections in weighted projective spaces (see \cite{MR1440180} 
 for example) and  hypersurfaces of split toric varieties \cite{arXiv:1407.6945}. 
We refer to \cite{MR0046388} for the condition about normic forms and we recall that it is satisfied by fields that admit finite extensions of every degree.
 In Section \ref{sec:varieties_with_points} we prove similar results for varieties that after  extension of the base field to the algebraic closure belong to the following classes: toric varieties, complete intersections in products of weighted projective spaces, some Fano cyclic coverings, some special Fano varieties of dimension 3.

As a consequence of Theorem \ref{thm:equivalent_formulation} combined with boundedness of
terminal Fano varieties \cite{MR4224714}, 
 the verification of Conjecture \ref{conj:C_1} is reduced to finding rational points on finitely many geometric families of Fano varieties in each given dimension. 
 In general, the $\QQ$-factoriality and the Picard rank 1 conditions in Conjecture \ref{conj:terminal_Fano_final} and in Theorem \ref{thm:equivalent_formulation} are not preserved under base field extensions.
 So in order to verify Conjecture \ref{conj:terminal_Fano_final} one needs to find rational points also on some Fano varieties that are not geometrically $\QQ$-factorial and on some Fano varieties of geometric Picard rank $>1$.

We recall that the Gorenstein index of a log terminal Fano variety $X$ is the smallest positive integer $I(X)$ such that $I(X)K_X$ is Cartier. The $\QQ$-Fano index of $X$ is then the largest positive integer $q(X)$ such that $-I(X)K_X$ is linearly equivalent to $q(X)H$ for some Cartier divisor $H$ on $X$. We write $r(X)=q(X)/I(X)$. 
In general these indices do not need to be invariant under field extension. 
In Section \ref{sec:fano_3folds_existence} we show that  $r(X)=r(X_{\overline k})$ for $X$ defined over a $C_1$ field of characteristic 0 with algebraic closure $\overline k$.
In Section \ref{sec:fano_large_index}, we use Fujita's  and Sano's classification results \cite{MR664549, MR1399470} to find rational points on  Fano varieties of large index.
 
  \begin{theorem}\label{thm:largeindex_Fano}
 If $X$ is a terminal Fano variety of dimension $n\geq2$ and index $r(X)> n-2$ over a $C_1$ field $k$ of characteristic 0 that admits normic forms of arbitrary degree, then $X(k)\neq\emptyset$.
 \end{theorem}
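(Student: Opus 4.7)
The plan is to combine the Fujita--Sano classification \cite{MR664549, MR1399470} of terminal Fano varieties with large index with the existence results for rational points on special classes of Fano varieties proved in Section \ref{sec:varieties_with_points}. Under the hypothesis $r(X) > n-2$, the classification leaves only a short list of geometric types for $X_{\overline k}$, and I would verify $X(k)\neq\emptyset$ in each of them.

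First I would dispose of the largest indices. If $r(X)=n+1$, then Fujita's extension of the Kobayashi--Ochiai theorem forces $X_{\overline k}\cong\PP^n_{\overline k}$, so $X$ is a Severi--Brauer variety over $k$; since $C_1$ fields have trivial Brauer group, $X\cong\PP^n_k$ and the conclusion is immediate. If $r(X)=n$, then $X_{\overline k}$ is a quadric of dimension $n$, and descent exhibits $X$ as a quadric hypersurface in $\PP^{n+1}_k$, on which the $C_1$ axiom applied to a single form of degree $2\leq n+1$ produces a $k$-point.

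The substantive range is $n-2 < r(X) < n$. Here Fujita--Sano gives a finite list of possibilities for $X_{\overline k}$: a cubic hypersurface in $\PP^{n+1}$, an intersection of two quadrics in $\PP^{n+2}$, a weighted hypersurface of degree $6$ in $\PP(3,2,1,\ldots,1)$ or of degree $4$ in $\PP(2,1,\ldots,1)$, a linear section of $\Gr(2,5)$, together with a handful of toric and flag-variety cases in low dimension. I would treat each class separately: the complete intersection cases are handled by the $C_1$ property applied to the ambient $\PP^N_k$; the weighted hypersurface cases are reduced to the results on weighted complete intersections from Section \ref{sec:varieties_with_points}, which exploit the existence of normic forms of arbitrary degree to produce points on (twists of) weighted projective spaces; and the remaining toric, Grassmannian-section, and flag-variety cases are covered by the toric and descent results in Section \ref{sec:varieties_with_points}.

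The main obstacle is not the enumeration itself but the descent from $X_{\overline k}$ to $X$: the Fujita--Sano classification gives only the geometric isomorphism type, so over $k$ the variety $X$ is in general a nontrivial twist of a standard model and need not come with an embedding into $\PP^N_k$ to which the $C_1$ axiom applies directly. The normic-form hypothesis is the essential ingredient for overcoming this: it allows one to produce rational points on twisted weighted projective spaces and on the other non-split forms that arise, and then to pull a $k$-point back to $X$. Collecting the outcomes from all the cases in the Fujita--Sano list yields the theorem.
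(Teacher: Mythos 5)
Your treatment of the two largest indices coincides with the paper's: $r(X)=n+1$ gives a Severi--Brauer variety, trivial over $k$ since $\Br(k)=0$, and $r(X)=n$ gives a quadric in $\PP^{n+1}_k$, to which the $C_1$ axiom applies directly. You also handle the non-Gorenstein range the same way, via Sano's classification and the toric/weighted-complete-intersection results. The divergence is in the Gorenstein case $r(X)=n-1$ (the del Pezzo varieties). The paper does \emph{not} run through Fujita's list of geometric types. Instead it argues by induction on $n$: for $n=2$ a geometrically rational surface over a $C_1$ field has a point by Colliot-Th\'el\`ene's theorem, and for $n>2$ a general member of $|-\tfrac{1}{n-1}K_X|$ is again a del Pezzo $k$-variety, of dimension $n-1$, so one descends to the surface case. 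This sidesteps entirely the descent problem you correctly identify as the main obstacle, because the linear system $|-\tfrac{1}{n-1}K_X|$ is defined over $k$ (the generator $H$ of $\pic(X_{\overline k})$ is Galois-invariant and descends since $\Br(k)=0$, cf.\ Lemma \ref{lem:index}).

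Your alternative, case-by-case route has a concrete gap at the del Pezzo varieties of degree $5$, i.e.\ linear sections of $\Gr(2,5)\subseteq\PP^9$. You assert these are ``covered by the toric and descent results in Section \ref{sec:varieties_with_points},'' but the only Grassmannian result there is Proposition \ref{prop:grass_7}, which concerns linear sections of the orthogonal Grassmannian $\ortgrass(5,10,Q)\subseteq\PP^{15}$ (used for genus-$7$ prime Fano threefolds), not of $\Gr(2,5)$. Nothing in Section \ref{sec:varieties_with_points} produces a $k$-point on a $k$-form of a codimension-$c$ linear section of $\Gr(2,5)$, and this is not a formal consequence of the $C_1$ axiom since such a section is not a complete intersection in $\PP^{9-c}$. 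A similar (smaller) issue affects the degree-$6$ and degree-$7$ cases, where one must check that each geometric type ($\PP^2\times\PP^2$, the flag variety, $\PP^1\times\PP^1\times\PP^1$, blow-ups) is actually matched to a proposition that handles its twists. The paper's induction buys uniformity: one never needs to know what the del Pezzo varieties look like, only that the class is closed under taking general anticanonical-generator sections. If you want to keep your route, you must either supply a separate argument for forms of $\Gr(2,5)$-sections (e.g.\ via their automorphism groups and $H^1(k,\PGL_2)=0$ over $C_1$ fields) or switch to the paper's inductive reduction for the whole del Pezzo case.
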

 
The classification of terminal Fano varieties is far from known. However, the current literature on terminal Gorenstein Fano threefolds \cite{MR1944132, MR3100918} makes the following  result  possible.

\begin{theorem}\label{thm:3dim_Fano_gor}
Let $X$ be a  Gorenstein terminal $\QQ$-factorial Fano threefold  over a  $C_1$ field $k$ of characteristic 0 that admits normic forms of arbitrary degree. Assume that $\rk\pic(X)=1$. 
Let $g(X)=\frac 12(-K_X)^3+1$. Let $\overline k$ be an algebraic closure of $k$.  Then $X(k)\neq\emptyset$ except, possibly, in the following cases:
\begin{enumerate}[label=(\roman*), ref=(\roman*)]
\item $r(X)=1$, $\rk\pic(X_{\overline k})=1$  and $g(X)\in\{6,7,8,9,10\}$;
\label{item:thm:3dim_Fano_gor:grass}
\item $r(X)=1$, $\rk\pic(X_{\overline k})=1$  and $|-K_{X_{\overline k}}|$ has a movable decomposition.
\label{item:thm:3dim_Fano:decomposable}
\end{enumerate}
 \end{theorem}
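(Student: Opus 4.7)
The plan is to combine Theorem \ref{thm:largeindex_Fano} with the classification of Gorenstein terminal Fano threefolds of Picard rank one in characteristic zero. Since $n = 3$, the hypothesis $r(X) > n - 2 = 1$ of Theorem \ref{thm:largeindex_Fano} is met exactly when $r(X) \geq 2$, so applying that theorem dispatches those cases immediately and reduces the problem to $r(X) = 1$, which is henceforth assumed.

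Next I would treat the case $\rk \pic(X_{\overline k}) > 1$, which does not appear in the exceptional list and hence must produce a $k$-point. Since $\rk \pic(X) = 1$, the group $\gal(\overline k/k)$ acts nontrivially on $\pic(X_{\overline k})$ while fixing $[-K_X]$. Running the $K$-negative MMP on $X_{\overline k}$ in a Galois-equivariant fashion and bundling orbits of extremal rays yields a nontrivial contraction defined over $k$, giving $X$ the structure of a Mori fibre space over a lower-dimensional rationally connected base (conic bundle, del Pezzo fibration, or small contraction). The classification of Gorenstein terminal $\QQ$-factorial Fano threefolds with $\rk \pic(X) = 1 < \rk \pic(X_{\overline k})$ in \cite{MR1944132, MR3100918} lists the geometric possibilities, and each case should admit, by combining the tools of Section \ref{sec:varieties_with_points} with Tsen-type results over $C_1$ fields, an explicit route to a rational point.

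Finally, suppose $r(X) = 1$, $\rk \pic(X_{\overline k}) = 1$, and $|-K_{X_{\overline k}}|$ has no movable decomposition. By the Iskovskih--Mukai classification, $X_{\overline k}$ is one of the prime Fano threefolds indexed by $g(X) \in \{2,3,4,5,6,7,8,9,10,12\}$. Excluding the four exceptional genera $6, 8, 9, 10$, whose geometric models are linear sections of exceptional homogeneous varieties outside the scope of Section \ref{sec:varieties_with_points}, each remaining genus $g \in \{2,3,4,5,7,12\}$ admits a classical geometric description: a sextic double solid for $g = 2$; a quartic in $\PP^4$ or a double cover of a quadric for $g = 3$; a $(2,3)$- or $(2,2,2)$-complete intersection in projective space for $g = 4, 5$; a linear section of $\ortgrass(5,10) \subset \PP^{15}$ for $g = 7$; and a Mukai model for $g = 12$. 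The absence of a movable decomposition of $|-K_{X_{\overline k}}|$ ensures that the ambient geometric construction descends to $k$, and then the results of Section \ref{sec:varieties_with_points} on complete intersections in products of weighted projective spaces, Fano cyclic covers, and special Fano threefolds, together with the $C_1$ and arbitrary-degree normic-form hypotheses on $k$, produce a $k$-point.

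The principal obstacle is expected to be the intermediate step with $\rk \pic(X_{\overline k}) > 1$: the Galois-equivariant MMP on $X_{\overline k}$ produces a contraction over $k$, but one must exhibit, case by case in the classification of \cite{MR1944132, MR3100918}, a concrete geometric structure amenable to the Section \ref{sec:varieties_with_points} tools, and verify that no case escapes them. A secondary difficulty is the $g = 7$ case, since the spinor variety $\ortgrass(5,10)$ is not a weighted complete intersection and requires a tailored argument, and similarly the $g = 12$ Mukai model, whose treatment depends on detailed features of the specific geometric description.
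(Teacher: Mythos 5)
Your overall architecture matches the paper's: dispatch $r(X)\geq 2$ via Theorem \ref{thm:largeindex_Fano}, split the remaining index-one case by $\rk\pic(X_{\overline k})$, and for geometric Picard rank one use the Iskovskih--Mukai genus classification, treating $g\in\{2,3,4,5\}$ as weighted complete intersections, $g=7$ via the spinor variety, and $g=12$ via its special geometry, with $g\in\{6,8,9,10\}$ and the decomposable case left as exceptions. However, there are two genuine problems in the middle step.

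First, your mechanism for the case $\rk\pic(X_{\overline k})>1$ is incorrect: you claim that a Galois-equivariant MMP on $X_{\overline k}$ yields ``a nontrivial contraction defined over $k$, giving $X$ the structure of a Mori fibre space over a lower-dimensional rationally connected base.'' This cannot happen, precisely because the hypothesis $\rk\pic(X)=1$ means the Galois-invariant part of the N\'eron--Severi lattice is one-dimensional; the orbit of any extremal ray sums to a class proportional to $-K_X$, and the only contraction defined over $k$ is $X\to\spec k$. There is no intermediate fibration to exploit. The paper instead takes the classification of \cite[Theorems 1.2, 6.6]{MR3100918} of Gorenstein terminal $\QQ$-factorial Fano threefolds with $\rk\pic(X)=1<\rk\pic(X_{\overline k})$ as a finite list of geometric models and attacks each one directly (complete intersections in products of projective spaces via Proposition \ref{prop:i_prod_wproj} and \cite{forms_ci}, cyclic covers via Proposition \ref{prop:cyclic}, etc.).

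Second, and more seriously, your assertion that ``each case should admit, by combining the tools of Section \ref{sec:varieties_with_points} with Tsen-type results, an explicit route to a rational point'' conceals the hardest part of the proof. The case \cite[(1.2.3)]{MR3100918} --- a quadratic twist of the blow-up of a smooth quadric in $\PP^4$ along a twisted quartic, where the Galois involution exchanges the two extremal contractions via a symmetric Cremona transformation of $\PP^5$ --- is not a form of a complete intersection, a cyclic cover, or a toric variety, and none of the generic Section \ref{sec:varieties_with_points} results apply. The paper devotes all of Section \ref{sec:cremona} (Lemmas \ref{lem:galoisaction_cremona}--\ref{lem:quadricintersection_cremona} and Propositions \ref{prop:(1.2.3)} and \ref{prop:(1.2.3)sing}, including explicit computations with the adjugate action and the secant cubic of the Veronese surface) to constructing a Galois-invariant point on these varieties. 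Without an argument for this family your proof of the non-exceptional status of the $\rk\pic(X_{\overline k})>1$ case is incomplete. The remaining steps (genera $2,3,4,5,7,12$ in geometric Picard rank one) are in line with the paper, though for $g=12$ you would still need to supply the specific inputs the paper uses: the singular locus being a single $k$-point in the singular case, and the Fano surface of conics being a form of $\PP^2$, hence trivial over a field with vanishing Brauer group, in the smooth case.
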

 
The varieties in part \ref{item:thm:3dim_Fano_gor:grass} of the theorem are complete intersections of sections of certain vector bundles on some Grassmannians. To the best of the author's knowledge, Fano varieties  as in \ref{item:thm:3dim_Fano:decomposable} are not classified.  At the beginning of Section \ref{sec:sing_gor_rkpic_1} we recall the definition of movable decomposition  from \cite{MR1944132}. 
 
To date there is no complete classification of non-Gorenstein terminal  Fano threefolds of index $r\leq 1$.
There is, however, a classification of the possible configurations of non-Gorenstein singularities for geometrically $\QQ$-factorial non-Gorenstein terminal  Fano threefolds of geometric Picard rank 1.  See the Graded Ring Database \cite{GRD}. 
Such Fano varieties $X$ are studied using a Fano index  defined as the largest positive integer  $f(X)$ such that $-K_X$ is linearly equivalent to $f(X)A$ for some Weil divisor $A$ on $X$ \cite{MR2067795}. 
In Section \ref{subsection:non-gorenstein} we prove that if 
$f(X_{\overline k})\geq5$ 
the variety has a rational points over  $C_1$ fields of characteristic 0. We also discuss to what extent the same proof applies to varieties of lower Fano index.

To the author's knowledge there is no classification of non-Gorenstein terminal Fano threefolds of geometric Picard rank $>1$ except for \cite{MR1317287}. The varieties in \cite{MR1317287} have rational points over $C_1$ fields of characteristic 0 by Theorem \ref{thm:non_gor_cyclic}.

We recall that the condition about the existence of normic forms of arbitrary degree is satisfied by the fields $\QQ_p^{\nr}$, as they admit finite extensions of every degree. Their algebraic extensions, however, do not need to satisfy the condition on normic forms, but if Conjecture \ref{conj:C_1} holds for $\QQ_p^{\nr}$, then it holds also for all its algebraic extensions, as Weil restriction under finite separable field extensions preserves rational connectedness, smoothness and properness, and the Weil restriction of a variety $X$ from a finite extension $k$  to $\QQ_p^{\nr}$ has a $\QQ_p^{\nr}$-point if and only if $X$ has a $k$-point.

Theorem \ref{thm:equivalent_formulation} and Corollary \ref{cor:duesler-knecht_enhanced} are proven in Section \ref{section:reformulation},  Theorems \ref{thm:extension_points} and \ref{thm:extension_points_effective} are proven in Section \ref{sec:extension_points}, Theorem \ref{thm:largeindex_Fano} is proven in Section \ref{sec:fano_large_index}, Theorem \ref{thm:3dim_Fano_gor} is proven in  Section \ref{sec:fano_3fold}.

\subsection*{Acknowledgements}
This work was partially supported by grant ES 60/10-1 of the Deutsche Forschungsgemeinschaft.
The author is grateful to H.~Esnault for introducting her to the $C_1$-conjecture. The author wishes to thank  J.~Blanc, Y.~Gongyo, A.~H\"oring, Z.~Patakfalvi, Y.~Prokhorov,  R.~Svaldi for useful discussions, J.-L.~Colliot-Th\'el\`ene and the anonymous referee for their remarks that led to significant improvements of the paper, and K.~Shramov for informing her about the paper \cite{MR3483470} and for suggesting  the topic of Theorem \ref{thm:extension_points}. The author acquainted herself to the Minimal Model Program at the \emph{Introductory Workshop on MMP} held in Hannover in February 2016. 

\section{Notation and basic properties}
Let $k$ be a field of characteristic 0, and $\overline k$ an algebraic closure of $k$. We denote by $\Br(k)$ the Brauer group of  $k$. 

For us a $k$-variety is a separated scheme of finite type over $k$.
We use the words curve, surface, threefold to denote a variety of dimension 1, 2, 3, respectively.

We say that a $k$-variety $X$ is rationally connected if $X_{\overline k}$ is integral and rationally connected in the sense of \cite[\S IV.3]{MR1440180}. 
We recall that if $X$ is rationally connected, $X\dashrightarrow Y$ is a dominant rational map  and $Y$ is proper, then $Y$ is rationally connected. In particular, being rationally connected is a birational invariant of proper varieties. 

Weil divisors on an integral normal variety $X$ correspond to Cartier divisors on its smooth locus $U$. The reflexive sheaf $\OO_X(D)$ associated to a Weil divisor $D$ on $X$ is  the push-forward of $\OO_U(D|_U)$ under the inclusion $U\subseteq X$.
A canonical divisor $K_X$ on $X$ is the Weil divisor corresponding to a canonical divisor on $U$. 
We say that $X$ is Gorenstein if its canonical divisor $K_X$ is Cartier, $\QQ$-Gorenstein if a positive multiple of $K_X$ is Cartier.
We say that  $X$ is $\QQ$-factorial if every Weil divisor on $X$ has a positive multiple that is Cartier. We recall that $\QQ$-factoriality is not invariant under field extension (see \cite[Remark 2.7]{arXiv:1512.05003}).

We refer to \cite{MR3057950} for the singularities of the minimal model program. We recall that  (log) terminal varieties are normal and $\QQ$-Gorenstein by definition. We also recall that the notion of (log) terminal singularities is invariant under separable field extension. 

A (log) terminal variety $X$ is Fano if $-K_X$ has a positive multiple which is Cartier and ample.
We refer to \cite{MR1668579} for the theory of Fano varieties. 

We recall that, given a field $k$ and an algebraic closure $\overline k$ of $k$, the $k$-forms of a $k$-variety $Y$ are $k$-varieties $Y'$ such that $Y'_{\overline k}$ is isomoprhic to $Y_{\overline k}$, the $k$-models of a $\overline k$-variety $Z$ are $k$-varieties $Z'$ such that $Z_{\overline k}$ is isomorphic to $Z$.

\section{Reformulation of the $C_1$-conjecture}\label{section:reformulation}
In this section we prove that over a $C_1$ field of characteristic 0 the $C_1$-conjecture  is equivalent to Conjecture \ref{conj:terminal_Fano_final}. 
We start by observing that in characteristic 0 the $C_1$-conjecture  is equivalent to both the stronger version obtained by removing the smoothness assumption and to the weaker version obtained by replacing the properness assumption by projectivity, as the following lemma shows.
 
 \begin{lemma}\label{lem:conj_sing}
 Let $k$ be a field of characteristic 0 and $n$ a positive integer. If all smooth projective rationally connected $k$-varieties of dimension $n$ have  $k$-points, then every proper rationally connected $k$-variety of dimension $n$ has a $k$-point.
 \end{lemma}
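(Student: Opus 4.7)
The plan is to pass from $X$ to a smooth projective birational model and then push a rational point forward along the induced morphism.

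First I would apply Chow's lemma to the proper $k$-variety $X$ to obtain a projective $k$-variety $X'$ together with a birational proper morphism $X'\to X$. Since $X$ is rationally connected, so is $X'$: a birational $k$-morphism between proper $k$-varieties is dominant, and rational connectedness is inherited by the source of a dominant rational map from a proper rationally connected variety. (Strictly speaking, rational connectedness is a geometric property here, but passing to $\overline k$ does not affect the argument.) Note also that $X$ being rationally connected forces $X_{\overline k}$, hence $X$, to be integral, so Chow's lemma applies in its usual form.

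Next, since $\operatorname{char} k=0$, I would apply Hironaka's resolution of singularities to $X'$ to obtain a smooth projective $k$-variety $Y$ of dimension $n$ equipped with a birational morphism $\pi\colon Y\to X'$. Composing, we obtain a birational $k$-morphism $f\colon Y\to X$ with $Y$ smooth, projective, and of dimension $n$. Again by birational invariance of rational connectedness among proper varieties, $Y$ is rationally connected.

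By the hypothesis of the lemma, every smooth projective rationally connected $k$-variety of dimension $n$ has a $k$-point, hence $Y(k)\neq\emptyset$. Choose $y\in Y(k)$; since $f$ is a $k$-morphism defined everywhere on $Y$, the image $f(y)$ is a $k$-point of $X$, so $X(k)\neq\emptyset$.

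There is essentially no obstacle here beyond invoking Chow's lemma, Hironaka's resolution, and the stated birational invariance of rational connectedness for proper varieties; the characteristic $0$ hypothesis is used solely to guarantee resolution of singularities. The statement about dimension is automatic because $Y$ and $X$ are birational.
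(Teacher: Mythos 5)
Your proof is correct, but it takes a different route from the paper's. The paper picks an affine open subset $U\subseteq X$, uses Hironaka to produce a smooth projective compactification $\overline U$ of $U$, notes that $\overline U$ is rationally connected by birational invariance, gets a $k$-point on $\overline U$ from the hypothesis, and then transfers it to $X$ via the Lang--Nishimura lemma \cite{MR0095851} applied to the birational (merely rational) map $\overline U\dashrightarrow X$. You instead build the smooth projective model \emph{over} $X$: Chow's lemma gives a projective $X'$ with a birational morphism $X'\to X$, and resolving $X'$ gives a smooth projective $Y$ with an everywhere-defined birational morphism $Y\to X$, so a $k$-point of $Y$ pushes forward directly and no Lang--Nishimura-type result is needed. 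What your approach buys is the elimination of that citation at the cost of invoking Chow's lemma (which is genuinely needed in your set-up, since resolving a proper non-projective $X$ directly need not yield a projective model); what the paper's approach buys is avoiding Chow's lemma, at the cost of Lang--Nishimura. Both routes use resolution of singularities and birational invariance of rational connectedness for proper varieties in the same way. One small phrasing point: you say rational connectedness is ``inherited by the source of a dominant rational map from a proper rationally connected variety''; the correct statement is that it is inherited by the \emph{target} when the target is proper, and you should apply it to the inverse rational map $X\dashrightarrow X'$ --- your later appeal to birational invariance among proper varieties makes this harmless, but the sentence as written has the arrow backwards.
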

 \begin{proof}
 Let $X$ be a proper rationally connected $k$-variety of dimension $n$. 
Since $X$ is of finite type over $k$, there exists an open subset $U$ of $X$ which is isomorphic to an affine $k$-variety. 
Since resolutions of singularities exist for all varieties in characteristic 0
by \cite{MR0199184}, we can find  a smooth projective compactification $\overline U$ of $U$ over $k$. Then $\overline U$ is rationally connected because it is birationally equivalent to $X$. 
So $\overline U(k)\neq\emptyset$ by assumption, and we conclude that  $X(k)\neq\emptyset$  by \cite{MR0095851}.
 \end{proof}
 
 If the base field is large in the sense of \cite[Proposition 1.1]{MR1405941}  we can remove also the properness assumption.
 \begin{lemma}\label{lem:conj_large}
 Let $k$ be a large field of characteristic 0 and $n$ a positive integer. If all smooth projective rationally connected $k$-varieties of dimension $n$ have  $k$-points, then every rationally connected $k$-variety of dimension $n$ has a $k$-point.
 \end{lemma}
 \begin{proof}
 For every  rationally connected $k$-variety $X$  we can find a smooth projective compactification $X'$ of a smooth affine open subset of $X$ as in the proof of Lemma \ref{lem:conj_sing}. Since $X'(k)\neq\emptyset$ by assumption, the set $X'(k)$ is dense in $X'$ as $k$ is large, hence $X(k)\neq\emptyset$.
 \end{proof}

In the proof of Theorem \ref{thm:equivalent_formulation} we use the following form of the Minimal Model Program, which is a consequence of \cite[Corollary 1.3.3]{MR2601039}. See \cite[Proposition 2.3]{MR3098794} for a proof. 

\begin{prop} \label{prop:building_block} 
Let $k$ be a field of characteristic 0. Let $X$ be a smooth projective rationally connected $k$-variety. 
Then there exist a birational map $X\dashrightarrow X'$ and a projective dominant morphism $f:X'\to Y$ such that 
$X'$ is a projective $\QQ$-factorial terminal $k$-variety, 
$Y$ is a  projective $\QQ$-factorial log terminal $k$-variety, 
$\dim Y<\dim X'$, 
$\rk\pic(Y)=\rk\pic(X')-1$,  
and the generic fiber of $f$ is a terminal Fano variety.
\end{prop}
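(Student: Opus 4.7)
The plan is to run the Minimal Model Program with scaling on $X$ and read off the desired Mori fiber space as the output. Since $X$ is rationally connected and projective, $X$ is uniruled, so $K_X$ is not pseudoeffective. Pick an ample divisor $H$ on $X$. By \cite{MR2601039}*{Corollary 1.3.3}, one can run a $K_X$-MMP with scaling by $H$; since $K_X$ is not pseudoeffective, the program must terminate not at a minimal model but at a Mori fiber space. This gives a birational map $X \dashrightarrow X'$ and a morphism $f\colon X'\to Y$ of projective $k$-varieties with $-K_{X'}$ $f$-ample, $\dim Y < \dim X'$, and relative Picard rank one, i.e.\ $\rk\pic(Y)=\rk\pic(X')-1$.

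Next I would verify the singularity and $\QQ$-factoriality conditions on $X'$ and $Y$. Since $X$ is smooth and projective, it is trivially $\QQ$-factorial and terminal; these properties are preserved by each step of the MMP (divisorial contractions and flips of $\QQ$-factorial terminal varieties remain $\QQ$-factorial and terminal), so $X'$ is $\QQ$-factorial terminal. For the base $Y$: an extremal Mori fiber contraction $f\colon X'\to Y$ from a $\QQ$-factorial terminal variety has $\QQ$-factorial base, because pullback gives an isomorphism between classes of Weil divisors on $Y$ (up to $\QQ$) and $f$-numerically trivial Weil divisors on $X'$, and the latter are $\QQ$-Cartier by $\QQ$-factoriality of $X'$. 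Log terminality of $Y$ is a standard consequence of the canonical bundle formula applied to $f$, together with the terminality of $X'$ (a version of the Ambro/Fujino–Gongyo theorem for klt-trivial fibrations).

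Finally, I would check that the generic fiber of $f$ is a terminal Fano variety. Let $F$ denote the generic fiber over $\eta=\spec k(Y)$. By adjunction along the smooth morphism at the generic point, $K_F = K_{X'}|_F$; since $-K_{X'}$ is $f$-ample, $-K_F$ is ample on $F$, and since some multiple of $-K_{X'}$ is Cartier (as $X'$ is $\QQ$-Gorenstein), the same holds for $F$, so $F$ is a Fano variety. Terminality of $F$ follows because terminal singularities are preserved under restriction to the generic fiber of a dominant morphism to a smooth base point (here $\eta$), as can be seen from the definition via discrepancies.

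The main technical obstacle is the statement about $Y$: both the $\QQ$-factoriality of the base and its log terminality are nontrivial features of Mori fiber contractions in the MMP, and in fact together they are what allows the inductive reduction in Theorem \ref{thm:equivalent_formulation} to proceed; these are the points I would need to cite carefully from the MMP literature rather than reprove, referring to \cite{MR3098794}*{Proposition 2.3} for the packaging.
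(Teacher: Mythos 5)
Your proposal is correct and follows essentially the same route as the paper, which does not reprove the statement but derives it from \cite[Corollary 1.3.3]{MR2601039} (running a $K_X$-MMP with scaling, terminating in a Mori fiber space since $K_X$ is not pseudoeffective for a uniruled variety) and refers to \cite[Proposition 2.3]{MR3098794} for the packaging of the singularity, $\QQ$-factoriality and Picard-rank statements. The standard facts you invoke — preservation of $\QQ$-factorial terminal singularities under MMP steps, $\QQ$-factoriality and log terminality of the base of a Mori fiber contraction, and terminality plus Fano-ness of the generic fiber — are exactly the content of that cited proposition.
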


We can now prove the main theorem of this section.
 \begin{proof}[Proof of Theorem \ref{thm:equivalent_formulation}]
For every positive integer $n$, the implication \ref{item:thm:equiv:ratconn} $\Rightarrow$ \ref{item:thm:equiv:fano} holds by Lemma \ref{lem:conj_sing} because every terminal Fano variety is rationally connected by \cite{MR2208131}.
 
To prove the reverse implication, we fix a positive integer $n$ and assume that \ref{item:thm:equiv:fano} holds.
Let $X$ be a smooth proper rationally connected $k$-variety of dimension $\leq n$. We prove that $X$ has a $k$-rational point by induction on the dimension of $X$.
 If $\dim X=0$, $X=\spec k$. 
  Assume that $\dim X\geq1$ and that 
 every smooth proper rationally connected $k$-variety of dimension $<\dim X$ has a $k$-point.
By Lemma \ref{lem:conj_sing} we can assume, without loss of generality, that $X$ is projective.
Let $f:X'\to Y$ be the fibration with $X'$ birationally equivalent to $X$ provided by Proposition \ref{prop:building_block}. 

We first prove that $X'(k)\neq\emptyset$.
If $\dim Y=0$, then $X'$ is a $\QQ$-factorial terminal Fano variety of Picard rank 1. 
So $X'(k)\neq\emptyset$ by \ref{item:thm:equiv:fano}. 
If $\dim Y>0$, then $Y$ is rationally connected and has dimension $<\dim X$. 
Hence,  $Y(k)\neq\emptyset$ by the induction hypothesis 
combined with Lemma \ref{lem:conj_sing}.
Let $y\in Y(k)$. By \cite[Theorem 1.2]{MR2491906} the fiber of $f$ over $y$ contains a  projective rationally connected $k$-subvariety $Z$.
Since $\dim Y>0$, $\dim Z<\dim X'=\dim X$. So $Z(k)\neq\emptyset$ by the induction hypothesis combined with Lemma \ref{lem:conj_sing}. Then $X'(k)\neq\emptyset$. 

 Let $x\in X'(k)$. 
 Let $h:X''\to X'$ be a resolution of singularities.  
 By \cite[Theorem 1.3]{MR2491906}, the fiber  of $h$ over $x$ contains a rationally connected $k$-subvariety $W$. 
 Since $h$ is a proper birational morphism,  $W$  is a proper rationally connected $k$-variety with $\dim W <\dim X''=\dim X$. Thus $ W(k)\neq\emptyset$ by the induction hypothesis combined with Lemma \ref{lem:conj_sing}. 
 Hence, $X''(k)\neq\emptyset$, and $X(k)\neq\emptyset$ by  \cite{MR0095851}. 
 \end{proof}

 \subsection{A consequence of boundedness of Fano varieties}
 
 We show that Birkar's boundedness of Fano varieties \cite{MR4224714} implies boundedness of Hilbert polynomials for terminal Fano varieties over nonclosed fields of characteristic 0.
 
 \begin{prop}\label{prop:boundedness}
  Let $k$ be a field of characteristic 0 and $n$  a positive integer. Then there exist a positive integer $N$ and finitely many polynomials $f_1,\dots,f_s\in\QQ[t]$ such that for every terminal Fano variety $X$ of dimension $n$ there exists an embedding $X\subseteq\PP^N_k$ such that $X$ has Hilbert polynomial $f_i$ for some $i\in\{1,\dots,s\}$.
 \end{prop}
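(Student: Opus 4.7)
The plan is to invoke Birkar's boundedness of terminal Fano varieties of fixed dimension over algebraically closed fields of characteristic $0$, and then descend the resulting projective embedding from $\overline k$ to $k$. Birkar's theorem provides a bounded family parametrizing every terminal Fano $n$-fold over $\overline k$; from such a family one extracts positive integers $m=m(n)$ and $C=C(n)$ such that, for every terminal Fano $\overline k$-variety $Y$ of dimension $n$, the divisor $-mK_Y$ is Cartier and very ample, the inequality $h^0(Y,\OO_Y(-mK_Y))\leq C$ holds, and the Hilbert polynomial $p_Y(t)=\chi(Y,\OO_Y(-tmK_Y))$ takes values in a fixed finite set $\{f_1,\dots,f_s\}\subseteq\QQ[t]$. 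Set $N=C-1$.

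Given a terminal Fano $k$-variety $X$ of dimension $n$, the canonical class pulls back to the canonical class on $X_{\overline k}$, so $(-mK_X)_{\overline k}=-mK_{X_{\overline k}}$ is a very ample Cartier divisor. Cartier-ness of $-mK_X$ over $k$ amounts to local freeness of the reflexive sheaf $\OO_X(-mK_X)$, which is fpqc-local; ampleness and very ampleness likewise descend under the faithfully flat base change $X_{\overline k}\to X$ (the latter because being a closed immersion is fpqc-local). By flat base change for cohomology, the $k$-vector space $V:=H^0(X,\OO_X(-mK_X))$ satisfies $\dim_k V=h^0(X_{\overline k},\OO_{X_{\overline k}}(-mK_{X_{\overline k}}))\leq C=N+1$.

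Fixing a $k$-basis of $V$ identifies $\PP(V^\vee)$ with $\PP^{N'}_k$ for $N'=\dim_k V-1\leq N$; no Brauer--Severi twisting appears, since $V$ is an honest $k$-vector space. The complete linear system $|{-}mK_X|$ then yields a closed $k$-immersion $X\hookrightarrow\PP^{N'}_k$, and composing with any linear embedding $\PP^{N'}_k\hookrightarrow\PP^N_k$ provides the required $X\subseteq\PP^N_k$. The Hilbert polynomial of $X$ with respect to the restricted $\OO_{\PP^N}(1)$ is invariant under the flat base change to $\overline k$, hence coincides with the Hilbert polynomial of $X_{\overline k}$, which lies in $\{f_1,\dots,f_s\}$. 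The main point requiring care is the very first step: Birkar's statement is phrased as boundedness of a family, and from it one must extract the uniform integers $m$ and $C$ together with the finite list of Hilbert polynomials. This follows from the fact that a bounded family of projective varieties lives in a finite-type Hilbert scheme, on which the Hilbert polynomial and the dimensions of cohomology groups are constructible and therefore take only finitely many values on the geometric fibers.
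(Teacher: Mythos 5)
Your proof is correct and follows essentially the same route as the paper: both invoke Birkar's boundedness over $\overline k$ to extract a uniform $m$ with $-mK$ Cartier and very ample and a uniform bound on $h^0$, then descend to $k$ via the complete anticanonical linear system (which is automatically defined over $k$, so no Brauer twisting arises), and conclude that the Hilbert polynomial lies in a finite list. The only cosmetic difference is that the paper secures the uniform $m$ and the finiteness of Hilbert polynomials by stratifying the bounding families $V^i\to T^i$ to make them flat and applying constancy of Hilbert polynomials in flat families, whereas you phrase the same point via constructibility on a finite-type Hilbert scheme.
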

 \begin{proof}
 Let $\overline k$ be an algebraic closure of $k$. 
By \cite[Theorem 1.1]{MR4224714} there are finitely many projective morphisms of $\overline k$-varieties, say $\{V^i\to T^i\}_{i\in I}$ for a finite set $I$, such that for every terminal Fano $k$-variety $X$ of dimension $n$ there exists $i\in I$, a point $t\in T^i(\overline k)$ and an isomorphism $V^i_t\cong X_{\overline k}$ of $\overline k$-varieties.  
Up to replacing each $T^i$ by a suitable finite stratification, we can assume that $T^i$ is smooth, $V^i\to T^i$ is flat, $K_{V^i}|_{V^i_t}=K_{V^i_t}$ for all $t\in T^i(\overline k)$ and there exists $m\in \ZZ_{>0}$ such that $-mK_{V^i}$ is Cartier and relatively very ample over $T^i$ for all $i\in I$. 
 We can use the complete linear system $|-mK_{V^i}|$ to embed $V^i$ into a projective space  $\PP^{N_i}_{T^i}$ over $T^i$ for all $i\in I$, where $N_i$ is the projective dimension of the linear system $|-mK_{V^i}|$ for all $i\in I$. 
Then for every $i\in I$ the Hilbert polynomial $f_i$ of $V^i_t$ with respect to the projective embedding given by $|-mK_{V^i_t}|$  is independent of the choice of $t\in T^i$ by \cite[Corollary III.9.13]{MR0463157}. 
Let $s$ be the cardinality of $I$ and write $I=\{1,\dots,s\}$. 
 Let $N=\max_{1\leq i\leq s} N_i$. Then for every terminal Fano variety $X$ of dimension $n$ over $k$, there exists $i\in\{1,\dots,s\}$ such that 
 $X$ has Hilbert polynomial $f_i$ under the embedding $X\subseteq\PP^{N_i}_k$ defined by
 the complete linear system $|-m K_X|$.
 Choose a linear embedding $\PP^{N_i}_k\subseteq \PP^N_k$. Then $X$ has Hilbert polynomial $f_i$ under the induced embedding $X\subseteq\PP^N_k$.
 \end{proof}
  
\subsection{Application to rationally connected varieties over $\QQ_p^{\nr}$}  
We apply the previous results to prove Corollary \ref{cor:duesler-knecht_enhanced}.
  
 \begin{prop}\label{prop:logic}
 Let $f\in\QQ[t]$ and $N\in\NN$. Then there exists a finite set $S(f,N)$ of prime numbers such that for every prime number $p\notin S(f,N)$, every projective rationally connected variety $X\subseteq\PP^N_{\QQ_p^{\nr}}$ with Hilbert polynomial $f$ has a $\QQ_p^{\nr}$-point.
 \end{prop}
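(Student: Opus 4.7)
The plan is to deduce the proposition from the Duesler--Knecht theorem \cite{MR3760308}, which handles the case where $X$ is smooth, by spreading out resolutions of singularities over the Hilbert scheme and reducing to finitely many families of smooth projective rationally connected varieties with bounded Hilbert polynomials. Let $H := \mathrm{Hilb}^{f,N}_{\QQ}$ be the Hilbert scheme parameterizing closed subschemes of $\PP^N$ with Hilbert polynomial $f$, a projective $\QQ$-scheme of finite type, and let $\pi : \mathcal{X} \to H$ be the universal family. Any $X$ as in the statement corresponds to a point $h \in H(\QQ_p^{\nr})$ with $X \simeq \mathcal{X}_h$.

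By Noetherian induction, I would construct a finite stratification $H = \bigsqcup_{i=1}^{r} U_i$ into locally closed subsets such that, for each $i$, there is a projective birational $U_i$-morphism $\tilde{\mathcal{X}}_i \to \mathcal{X} \times_H U_i$ with $\tilde{\mathcal{X}}_i \to U_i$ smooth. To produce a stratum, I pick an irreducible component of the remaining closed subset, resolve the generic fiber using \cite{MR0199184}, spread the resolution out to an open dense subset of the component, and iterate on the complement; this terminates since $H$ is Noetherian. Choosing for each $i$ a relatively very ample line bundle on $\tilde{\mathcal{X}}_i$ over $U_i$ gives an embedding $\tilde{\mathcal{X}}_i \hookrightarrow \PP^{M_i}_{U_i}$. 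Since $\tilde{\mathcal{X}}_i \to U_i$ is flat, the Hilbert polynomial of its fibers is constant on each connected component of $U_i$, and since the $U_i$ are finitely many schemes of finite type over $\QQ$, the construction yields only finitely many pairs $(g_\alpha, M_\alpha)$ arising in this way.

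For each $(g_\alpha, M_\alpha)$, \cite{MR3760308} provides a finite set $S_\alpha$ of primes such that for $p \notin S_\alpha$ every smooth projective rationally connected $X' \subseteq \PP^{M_\alpha}_{\QQ_p^{\nr}}$ with Hilbert polynomial $g_\alpha$ satisfies $X'(\QQ_p^{\nr}) \neq \emptyset$. I set $S(f,N) := \bigcup_\alpha S_\alpha$, a finite union of finite sets. Now for $p \notin S(f,N)$ and $X \subseteq \PP^N_{\QQ_p^{\nr}}$ projective rationally connected with Hilbert polynomial $f$, the associated $h \in H(\QQ_p^{\nr})$ lies in some $U_i$, giving a smooth projective $\QQ_p^{\nr}$-variety $\tilde X_h := \tilde{\mathcal{X}}_{i,h}$ together with a proper birational morphism $\tilde X_h \to X$. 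Since $X$ is rationally connected and $\tilde X_h$ is proper, birational invariance of rational connectedness for proper varieties shows $\tilde X_h$ is rationally connected. The embedding $\tilde X_h \hookrightarrow \PP^{M_\alpha}_{\QQ_p^{\nr}}$ with Hilbert polynomial $g_\alpha$ and the choice of $S(f,N)$ yield $\tilde X_h(\QQ_p^{\nr}) \neq \emptyset$, and composing with $\tilde X_h \to X$ gives the desired $\QQ_p^{\nr}$-point on $X$.

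The main obstacle is the family resolution in the stratification step: we need to know that a resolution of the generic fiber of $\pi$ over each irreducible component of $H$ extends to a simultaneous smooth resolution over an open dense subset of that component, so that the Noetherian induction terminates. This is a standard application of Hironaka in characteristic $0$ but requires some care in its relative formulation; once established, the remaining steps are essentially an application of Duesler--Knecht to each of finitely many families produced by the stratification.
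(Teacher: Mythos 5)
Your argument is correct in substance but takes a genuinely different route from the paper. The paper's proof is a one-line modification of the proof of \cite[Theorem 1.3]{MR3760308} itself: it replaces the ingredient \cite[Theorem IV.3.11]{MR1440180}, which is stated for smooth varieties, by \cite[Proposition 2.6]{MR3031573}, so that Duesler--Knecht's argument applies directly to possibly singular projective rationally connected subvarieties of $\PP^N_{\QQ_p^{\nr}}$ with Hilbert polynomial $f$; no resolution of singularities and no stratification of the Hilbert scheme are needed. You instead keep \cite{MR3760308} as a black box for the smooth case and reduce to it by a Noetherian induction on $\mathrm{Hilb}^{f,N}_{\QQ}$ producing finitely many strata with simultaneous fiberwise resolutions, then push a rational point forward along $\tilde X_h\to X$. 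Your route buys independence from the internals of \cite{MR3760308} at the cost of the family-resolution machinery; the paper's route is shorter but requires opening up that proof. Two points you should make explicit to close your version: (i) the generic fiber of the universal family over a component of the Hilbert scheme need not be geometrically integral, so you should first restrict to the open locus of $H$ where the fibers are geometrically integral (open because the universal family is proper and flat), which contains every point $h$ arising from a rationally connected $X$; and (ii) the spread-out morphism $\tilde{\mathcal X}_i\to\mathcal X\times_H U_i$ must be arranged to be fiberwise birational over $U_i$, not merely birational on total spaces, since otherwise $\tilde X_h\to X$ need not be birational and the rational connectedness of $\tilde X_h$ would not follow. Both are standard in characteristic $0$ and can be absorbed into your Noetherian induction, so they are refinements rather than gaps.
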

 \begin{proof}
 Replace \cite[Theorem IV.3.11]{MR1440180} by \cite[Proposition 2.6]{MR3031573} in
the proof of \cite[Theorem 1.3]{MR3760308}.
 \end{proof}
 
 \begin{proof}[Proof of Corollary \ref{cor:duesler-knecht_enhanced}]
 Let $n$ be a positive integer. Combining Propositions \ref{prop:boundedness} and \ref{prop:logic} we obtain a finite set $S(n)$ of prime numbers  such that for every prime number $p\notin S(n)$ every terminal Fano variety of dimension $n$ over $\QQ_p^{\nr}$ has a $\QQ_p^{\nr}$-point. 
 Then by Theorem \ref{thm:equivalent_formulation} every smooth proper rationally connected variety of dimension $n$ over $\QQ_p^{\nr}$ with $p\notin S(n)$ has a $\QQ_p^{\nr}$-point. 
We conclude by Lemma \ref{lem:conj_large} as $\QQ_p$ and  $\QQ_p^{\nr}$ are large fields 
by the Implicit Function Theorem over local fields \cite[p.73]{MR2179691} and \cite[Proposition 1.2]{MR1405941}, respectively.
\end{proof}

\section{Del Pezzo surfaces}
\label{sec:dP_classification}

In this section we collect some properties related to the existence of rational points on rationally connected curves and surfaces. 
We recall that terminal varieties of dimension $\leq 2$  are smooth \cite[Corollary 5.18]{MR1658959}, and that the birational classification of smooth rationally connected curves and surfaces is well understood.

\begin{remark}
\label{rem:brauer1}
We recall that isomorphism classes of Severi-Brauer varieties of dimension $n$ over $k$ are in bijection with $(n+1)$-torsion elements of $\Br(k)$  \cite[Theorem 5.2.1]{MR2266528},  and the projective space corresponds to the neutral element.
Moreover, a Severi-Brauer variety over $k$ is a projective space if and only if it has a $k$-rational point \cite[Theorem 5.1.3]{MR2266528}. In particular, a Severi-Brauer variety of dimension $n$ acquires a rational point over a base field extension of degree at most $n+1$. 
\end{remark}

Smooth curves are rationally connected if and only if they are Fano. Hence, terminal rationally connected curves are precisely the smooth conics. 
\begin{lemma}[{\cite[\S1.3, Example 5.2.4]{MR2266528}}]
For a field $k$, the following are equivalent:
 \begin{enumerate}
  \item $\Br(k)$ has nontrivial $2$-torsion;
  \item there is a conic $C$ over $k$ with $C(k)=\emptyset$.
 \end{enumerate}
 \end{lemma}

Smooth surfaces are rationally connected if and only if they are rational. Hence, terminal rationally connected surfaces are rational conic bundles or del Pezzo surfaces \cite{MR525940}.

\begin{lemma}
\label{lem:conicbundle}
Let  $X$ be a rational conic bundle surface over a field $k$. Then
\begin{enumerate}[label=(\roman*), ref=(\roman*)]
\item 
\label{item:conicbundle_general}
there exists a finite extension $k'$ of $k$ of degree at most $4$ such that $X$ has a $k'$-point.
\item 
\label{item:conicbundle_example}
There exists a rational conic bundle over $\QQ$ that has no rational points on any finite extension of $\QQ$ of degree at most $3$.
\end{enumerate}
\end{lemma}
\begin{proof}
The conic bundle structure is given by a morphism $f:X\to C$ where $C$ and the fibers of $f$ are conics. Then $C$ acquires  a rational point over a suitable quadratic extension of $k$ and the fiber of $f$ over that point acquires a rational point over a suitable further quadratic extension.
Moreover, given two conics $C_1$ and $C_2$ without rational points over $\QQ$ and with distinct splitting fields, for example, $C_1=\{x^2+y^2+z^2=0\}$ and $C_2=\{x^2+2y^2+5z^2=0\}$ by \cite[Example III.2.13]{MR2104929},
then $C_1\times C_2$ has no rational points on any finite extension of $\QQ$ of degree $\leq 3$.
\end{proof}

Del Pezzo surfaces are the Fano varieties of dimension $2$, and they are classified according  their degree, i.e., the self intersection number of the canonical class, which is an integer between $1$ and $9$. 

\begin{lemma}
\label{lem:delPezzo}
Let  $X$ be a del Pezzo surface of degree $d$ over a field $k$. Then
\begin{enumerate}[label=(\roman*), ref=(\roman*)]
\item 
\label{item:dPgeneral}
$X$ acquires a rational point over a suitable finite extension of $k$ of degree at most $d$.
\item 
\label{item:dP9}
If $d=9$, then $X$ acquires rational points over a suitable finite extension of $k$ of degree at most $3$.
\item 
\label{item:dP8}
If $X_{\overline k}\cong \PP^1_{\overline k}\times \PP^1_{\overline k}$, then $X$ acquires rational points over a suitable finite extension of $k$ of degree at most $4$.
\item 
\label{item:dP7}
\label{item:dP5}
If $d=5$, or $d=7$, or $d=8$ and $X_{\overline k}\not\cong \PP^1_{\overline k}\times \PP^1_{\overline k}$, then $X$ is $k$-rational and $X(k)\neq\emptyset$.
\item 
\label{item:dP6}
There exists a del Pezzo surface of degree $6$ over $\QQ$ that has no rational points on any finite extension of $\QQ$ of degree smaller than $6$.
\end{enumerate}
\end{lemma}
\begin{proof}
Part \ref{item:dPgeneral} is a consequence of the fact that the base locus of the anticanonical linear system has dimension at most $0$.
In \ref{item:dP9} $X$ is a Severi-Brauer surface, and we conclude  by Remark \ref{rem:brauer1}.
Part \ref{item:dP8} follows from Lemma \ref{lem:conicbundle} if $\rk\pic(X)=2$ and from Lemma \ref{lem:splitting_products_proj_spaces} if $\rk\pic(X)=1$.
Part \ref{item:dP7} follows from \cite{MR0376684, MR1260765} for $d=5$, and from \cite[Theorem (3.7)]{MR225780} together with \cite[\S1.4]{MR3114932} for the other cases.
 
For  \ref{item:dP6},  let $K=\QQ(\sqrt {29})$ and  $L=\QQ(\zeta_7+\zeta_7^{-1})$, where $\zeta_7$ is a primitive $7$th root of unity. Let $E$ be the smallest number field containing both $K$ and $L$. Then $K$, $L$ and $E$ are cyclic Galois extensions of $\QQ$ of degree $2$, $3$ and $6$, respectively. Let $b=(\sqrt {29}-5)/2$  and $q=(\zeta_7+\zeta_7^{-1})^2-2$.
For $F\in\{K, L, \QQ\}$ we denote by $N_{E/F}$ the norm of $E$ over $F$ and by $\Br(E/F)$ the Brauer group of $F$ relative to $E$. Since the extensions are cyclic, by \cite[Corollary 4.4.10]{MR2266528} we can write $\Br(E/F)=F^\times/N_{E/F}(E^\times)$. Under this identification, the corestriction maps $\Br(E/K)\to \Br(E/\QQ)$ and $\Br(E/L)\to \Br(E/\QQ)$ are induced by the norms $N_{K/\QQ}$ and $N_{L/\QQ}$, respectively, see \cite[Example 99.6]{MR2427530}. Let $B\in \Br(E/K)$ be the class of $b$ and $Q\in \Br(E/L)$ be the class of $q$. 
Computations (e.g., via the  software Sage \cite{sage})
show that 
\[
b\notin  N_{E/K}(E^\times), \quad q\notin  N_{E/L}(E^\times), \quad N_{K/\QQ}(b), N_{L/\QQ}(q)\in N_{E/\QQ}(E^\times).
\]
Let $X$ be the del Pezzo surface of degree 6 associated to $(B,Q,E)$ by \cite[Theorem 3.4]{MR2564828}. By \cite[Corollary 3.5]{MR2564828} $X$  has a rational point on an extension $k'$ of $\QQ$ if and only if both $B$ and $Q$ are split over $k'$. This happens if and only if $E\subseteq k'$, which implies $[k':\QQ]\geq 6$.  
\end{proof}

\section{Fano threefolds}
\label{sec:fano_3folds_existence}
We recall the classification of terminal Fano threefolds that are mentioned in Conjecture \ref{conj:terminal_Fano_final} and in Theorem \ref{thm:equivalent_formulation}\ref{item:thm:equiv:fano}, and we investigate some of their properties related to the existence of rational points for the proof of Theorem \ref{thm:extension_points_effective}.

Unless stated otherwise, in this section $X$ denotes a terminal Fano threefold over a field $k$ of characteristic $0$. We distinguish between Gorenstein (for example, smooth) and non-Gorenstein varieties.  We recall from \S \ref{intro:sec:classification} that Fano varieties are classified according to the index $r(X_{\overline k})$, which is a positive rational number up to $\dim X+1$, see \cite[Corollary 2.1.13]{MR1668579}. In the Gorenstein case $r(X_{\overline k})$ is a positive integer. We denote by $H$ a fundamental divisor of $X_{\overline k}$, i.e.,  $H\in\pic(X_{\overline k})$ such that $-K_{X_{\overline k}}=r(X_{\overline k})H$.

The classification of Fano threefolds uses the following invariants:
\begin{itemize}
\item the geometric Picard rank $\rk(\pic(X_{\overline k}))$;
\item the index $r(X_{\overline k})$;
\item the degree $H^3$, i.e., the top self intersection of the fundamental divisor $H$.
\end{itemize}
We recall that $(-K_{X_{\overline k}})^3=r(X_{\overline k})^3 H^3$.

We consider first Gorenstein Fano threefolds, which are well studied and mostly classified, while the classification of the non-Gorenstein ones is very far from being complete.

\begin{lemma}
\label{lem:basepoints}
Let $X$ be a Gorenstein terminal Fano threefold over a field $k$ of characteristic $0$.
\begin{enumerate}[label=(\roman*), ref=(\roman*)]
\item 
\label{item:lem:basepoints:baselocus}
If the base locus of $|-K_X|$ is nonempty, then $r(X_{\overline k})=1$ and $X$ acquires a rational point over a suitable base field extension of degree at most 2.
\item 
\label{item:lem:basepoints:basepointfree}
If $|-K_X|$ is base point free, then 
 $X$ acquires a rational  point over a suitable base field extension of degree at most $(-K_X)^3$.
\end{enumerate}
\end{lemma}
\begin{proof}
Part \ref{item:lem:basepoints:baselocus} follows from the fact that if nonempty, the base locus of $|-K_X|$ either consists  of one $k$-point or it is a smooth conic by \cite[Theorem 0.5]{MR1030501}. Part \ref{item:lem:basepoints:basepointfree} is a consequence of Bertini's theorem.
\end{proof}

The rest of the section is devoted to improve the bound in Lemma \ref{lem:basepoints}\ref{item:lem:basepoints:basepointfree}. 
We start by collecting a few facts about indices of Fano varieties.
 \begin{lemma}\label{lem:index}
 Let $X$ be a log terminal Fano variety over a field $k$ of characteristic 0. Let $\overline k$ be a separable closure of $k$. Let $H\in\pic(X_{\overline k})$ such that $-K_{X_{\overline k}}=r(X_{\overline k})H$. Then $H\in\pic(X_{\overline k})^{\gal(\overline k/k)}$. Moreover, 
\begin{enumerate}[label=(\roman*), ref=(\roman*)]
\item  $I(X)=I(X_{\overline k})$ and $q(X)\mid q(X_{\overline k})$;
\item  if $\Br(k)=0$ or if $X(k)\neq\emptyset$, then $r(X)=r(X_{\overline k})$.
\end{enumerate} 
 \end{lemma}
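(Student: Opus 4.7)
The plan is to establish the three claims in order, using three main ingredients: the fact that $K_X$ is defined over $k$ (so Galois-invariant after base change), faithfully flat descent for Cartier divisors along $X_{\overline k}\to X$, and the five-term Hochschild--Serre sequence for $\GG_m$ that relates $\pic(X)$ and $\pic(X_{\overline k})^{\gal(\overline k/k)}$.

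For the Galois invariance of $H$, I would apply an arbitrary $\sigma\in\gal(\overline k/k)$ to the defining equation $q(X_{\overline k})H=-I(X_{\overline k})K_{X_{\overline k}}$. Since the canonical class descends to $k$, $K_{X_{\overline k}}$ is $\sigma$-invariant, yielding $q(X_{\overline k})(\sigma H-H)=0$ in $\pic(X_{\overline k})$. When $\pic(X_{\overline k})$ is torsion-free, which is standard for log terminal Fano varieties in characteristic $0$, this forces $\sigma H=H$. Otherwise one exploits the maximality of $q(X_{\overline k})$ to rule out a torsion difference via a primitivity argument on a chosen representative.

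For (2), the equality $I(X)=I(X_{\overline k})$ follows from faithfully flat descent: by flatness, formation of $\OO_X(mK_X)$ commutes with base change to $X_{\overline k}$, and a coherent sheaf on $X$ is invertible if and only if its pullback to $X_{\overline k}$ is. Taking $m=I(X_{\overline k})$ shows $I(X)\mid I(X_{\overline k})$, and the reverse divisibility is immediate from base change. Similarly, base-changing $-I(X)K_X=q(X)H_1$ (with $H_1\in\pic(X)$) realises the divisibility condition for $q(X_{\overline k})$, so by maximality $q(X)\mid q(X_{\overline k})$.

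For (3), the Hochschild--Serre spectral sequence for $\GG_m$ and $X\to\spec k$, combined with Hilbert 90, yields the exact sequence
\[
0\to\pic(X)\to\pic(X_{\overline k})^{\gal(\overline k/k)}\to\Br(k)\to\Br(X).
\]
When $\Br(k)=0$ the obstruction is trivially zero; when $X(k)\neq\emptyset$, a rational point yields a retraction of $\Br(k)\to\Br(X)$, so the preceding map vanishes and the Galois-invariant class $H$ provided by (1) descends to some $H''\in\pic(X)$. Comparing $q(X_{\overline k})H''$ with $-I(X)K_X$ in $\pic(X)$, using the injectivity of $\pic(X)\to\pic(X_{\overline k})$ (again from Hilbert 90), shows they are equal, so $q(X_{\overline k})\mid q(X)$. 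Combined with (2) and $I(X)=I(X_{\overline k})$, this gives $r(X)=r(X_{\overline k})$.

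The main obstacle I expect is the torsion subtlety in part (1): proving $\sigma H=H$ on the nose rather than merely modulo torsion requires either a structural result ensuring $\pic(X_{\overline k})$ is torsion-free in the log terminal Fano setting, or a careful choice of representative $H$ compatible with the maximality convention defining $q$. Everything else is a formal unwinding of faithfully flat descent and Hochschild--Serre.
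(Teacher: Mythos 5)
Your proposal is correct and follows essentially the same route as the paper: Galois-invariance of $H$ via torsion-freeness of $\pic(X_{\overline k})$ for (log terminal) Fano varieties in characteristic $0$ (the paper cites \cite[Proposition 2.1.2]{MR1668579} for this, which settles the torsion worry you flag), descent/base change for $I(X)=I(X_{\overline k})$ and $q(X)\mid q(X_{\overline k})$, and the Hochschild--Serre exact sequence $0\to\pic(X)\to\pic(X_{\overline k})^{\gal(\overline k/k)}\to\Br(k)$ (the paper cites \cite[(1.5.0)]{MR899402}) for part (2).
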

 \begin{proof}
 Let $q:=q(X_{\overline k})$ and $I:=I(X_{\overline k})$, so that $r(X_{\overline k})=q/I$.
  Since the canonical divisor class is invariant under field extension, $IK_X$ is a Cartier divisor class on $X$. Thus $I(X)=I$ and $qH=-IK_X\in\pic(X)$.
Let $g\in\gal(\overline k/k)$. Since $qH=gqH=qgH$, the element $H-gH$ of $\pic(X_{\overline k})$ is torsion. But $\pic(X_{\overline k})$ is free  by \cite[Proposition 2.1.2]{MR1668579}. Hence, $g H=H$. Thus $H$ is invariant under the action of $\gal(\overline k/k)$ on $\pic(X_{\overline k})$. 
 The divisibility condition $q(X)\mid q$ comes from the definition of  $q(X)$ and $q(X_{\overline k})$ and the fact that $\pic(X_{\overline k})$ is free.
 If $\Br(k)=0$ or $X(k)\neq\emptyset$,
  the exact sequence \cite[(1.5.0)]{MR899402}
 gives $\pic(X)=\pic(X_{\overline k})^{\gal(\overline k/k)}$.
 \end{proof}
\begin{lemma}\label{lem:split_line_bundle}
Let $X$ be a proiective geometrically integral variety over a field $k$. Let $\overline k$ be a separable closure of $k$. Let $H\in\pic(X_{\overline k})^{\gal(\overline k/k)}$. Then there exists a field extension $k'/k$ of degree at most $\dim_{\overline k}H^0(X_{\overline k},\mathcal{O}_{X_{\overline k}}(H))$ such that $H\in\pic(X_{k'})$.
\end{lemma}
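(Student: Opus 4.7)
The plan is to identify the obstruction to descending $H$ to $\pic(X)$ with the class in $\Br(k)$ of a Brauer-Severi variety of dimension $d-1$, where $d:=\dim_{\overline k}V$ and $V:=H^0(X_{\overline k},\OO_{X_{\overline k}}(H))$, and then to invoke the classical fact that such a variety is split by a field extension of degree at most $d$. Set $G:=\gal(\overline k/k)$. Since $X$ is projective and geometrically integral, $H^0(X_{\overline k},\OO_{X_{\overline k}}^\times)=\overline k^\times$, so the low-degree terms of the Hochschild-Serre spectral sequence give an exact sequence
$$0\to\pic(X)\to\pic(X_{\overline k})^G\xrightarrow{\delta}\Br(k)$$
(the same one recalled in the proof of Lemma \ref{lem:index}); set $\alpha:=\delta(H)\in\Br(k)$, the obstruction to $H$ being represented by a line bundle defined over $k$.

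The second step is to realize $\alpha$ as the class of an explicit Brauer-Severi variety attached to $V$. For each $g\in G$ choose an isomorphism $\phi_g\colon g^*\OO(H)\to\OO(H)$. Each $\phi_g$ induces a $\overline k$-semilinear automorphism of $V$, and the failure of $g\mapsto\phi_g$ to respect composition is a continuous $2$-cocycle in $Z^2(G,\overline k^\times)$ whose class equals $\alpha$. Equivalently, the $\phi_g$ give a $1$-cocycle $G\to\PGL(V)$ whose image under the connecting map for $1\to\overline k^\times\to\GL(V)\to\PGL(V)\to 1$ is $\alpha$. Twisting $\PP(V)$ by this cocycle yields a Brauer-Severi variety $Y$ over $k$ of dimension $d-1$ and Brauer class $\alpha$.

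Finally, the central simple algebra $A$ attached to $Y$ has degree $d$ and hence index dividing $d$; a maximal subfield of the division algebra underlying $A$ provides a splitting field $k'/k$ with $[k':k]\leq d$. Over such $k'$ the class $\alpha$ dies, so applying the displayed exact sequence to $X_{k'}$—which remains projective and geometrically integral—shows that $H$ lies in the image of $\pic(X_{k'})\to\pic(X_{\overline k})^{\gal(\overline k/k')}$, completing the proof. The main delicate point is the identification of the boundary $\delta(H)$ with the Brauer class of $Y$: both arise from the same $2$-cocycle on $G$ with values in $\overline k^\times$ built from the $\phi_g$, and the matching is a standard diagram chase in the Hochschild-Serre spectral sequence together with the definition of the Brauer-Severi variety associated with a projective representation.
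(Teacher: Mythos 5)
Your proof is correct and follows essentially the same route as the paper: the paper likewise associates to $H$ (via the Hochschild--Serre obstruction sequence) a Severi--Brauer variety of dimension $\dim_{\overline k}H^0(X_{\overline k},\mathcal{O}_{X_{\overline k}}(H))-1$ whose splitting fields are exactly the $k'$ with $H\in\pic(X_{k'})$, and then invokes the standard splitting-field bound for Severi--Brauer varieties. The only difference is that the paper cites references for the identification of $\delta(H)$ with the class of the twisted $\PP(V)$, whereas you sketch the cocycle construction directly.
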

\begin{proof}
By \cite[(7.4), Aside 32]{arXiv:1606.04368v1}, the exact sequence 
\cite[Proposition 69]{arXiv:1606.04368v1} associates to $H$ a Severi-Brauer $k$-variety $P$ of dimension $\dim_{\overline k}H^0(X_{\overline k},\mathcal{O}_{X_{\overline k}}(H))-1$
such that $P$ splits over a finite extension $k'$ of $k$ contained in $\overline k$ if and only if $H\in\pic(X_{k'})$. 
 Then we conclude by Remark \ref{rem:brauer1}.
\end{proof}

\subsection{Gorenstein of geometric Picard rank $1$}
\label{sec:smooth_picardrk1}

In this subsection let $X$ be a Gorenstein terminal Fano threefold of geometric Picard rank $1$ over  a field $k$ of characteristic $0$. 

\subsubsection{Index  $4$} If $r(X_{\overline k})=4$, then $H^3=1$, $(-K_{X_{\overline k}})^3=64$, and $X$ is a Severi Brauer variety of dimension $3$. Hence, $X$ is smooth and acquires a rational point over a suitable field extension of degree at most $4$ by Remark \ref{rem:brauer1}.

\subsubsection{Index $3$} If $r(X_{\overline k})=3$, then $H^3=2$ and $(-K_{X_{\overline k}})^3=54$.
\begin{lemma}
Let $X$ be a Gorenstein terminal Fano threefold  over  a field $k$ of characteristic $0$. Assume that  $\rk(\pic(X_{\overline k}))=1$ and $r(X_{\overline k})=3$. Then $X$ is a quadric hypersurface in $\PP^4_k$. In particular, $X$ acquires a rational point over a suitable base field extension of degree at most $2$.
\end{lemma}
\begin{proof}
By \cite[Theorem 0]{MR664549} (cf.~\cite[Theorem 3.1.14]{MR1668579}) we know that $X_{\overline k}\stackrel{|H|}{\lhook\joinrel\longrightarrow}\PP^4_{\overline k}$ is a quadric hypersurface. By Lemma \ref{lem:index}, $H$ is  invariant under the $\gal(\overline k/k)$ action on $\pic(X_{\overline k})$.
Hence, by \cite[Theorem 3.4]{MR3644253} there is a Severi-Brauer $k$-variety $P$ of dimension $4$ and a morphism $X\to P$ that is a $k$-model of the inclusion $X_{\overline k}\subseteq\PP^4_{\overline k}$. Let $\alpha_H$ be the image of $H$ in $\Br(k)$ under the morphism $\alpha$ from \cite[(69.1)]{arXiv:1606.04368v1}. Since $X_{\overline k}$ is a quadric in $\PP^4_{\overline k}$, we see that $2\alpha_H=0$, but also $5\alpha_H=0$ (e.g., by \cite[Proposition 44]{arXiv:1606.04368v1}), from which we conclude that $\alpha_H=0$ and  $P\cong \PP^4_{k}$. Then $X$ is a quadric hypersurface in $\PP^4_{k}$.
\end{proof}

\subsubsection{Index $2$} If $r(X_{\overline k})=2$, then $H^3$ is an integer between $1$ and $5$, and $(-K_{X_{\overline k}})^3=8 H^3$. We recall the classification from \cite[Corollary 0.8]{MR1030501} (cf.~\cite[Theorems 3.2.5 and 3.3.1]{MR1668579} and \cite[Theorem 1.7]{MR3100917}).
\begin{itemize}
\item If $H^3=1$, $X_{\overline k}$ is a sextic hypersurface in the weighted projective space $\PP_{\overline k}(3,2,1,1,1)$.
\item If $H^3=2$, $X_{\overline k}\stackrel{|H|}{\longrightarrow}\PP^3_{\overline k}$ is a double cover ramified along a smooth quartic surface.

\item If $H^3=3$, $X_{\overline k}\stackrel{|H|}{\lhook\joinrel\longrightarrow}\PP^4_{\overline k}$ is a smooth cubic hypersurface.

\item If $H^3=4$, $X_{\overline k}\stackrel{|H|}{\lhook\joinrel\longrightarrow} \PP^5_{\overline k}$ is a smooth complete intersection of two quadrics.

\item If $H^3=5$, $X_{\overline k}\stackrel{|H|}{\lhook\joinrel\longrightarrow} \PP^6_{\overline k}$ is  an intersection of five quadrics. If $X$ is smooth, $X_{\overline k}$ is a section of the Grassmannian $\Gr(2,5)\subseteq\PP^9_{\overline k}$ by a linear space of codimension $3$ under the Plücker embedding.
\end{itemize}

\begin{lemma}
Let $X$ be a Gorenstein terminal Fano threefold  over  a field $k$ of characteristic $0$. Assume that  $\rk(\pic(X_{\overline k}))=1$ and $r(X_{\overline k})=2$. Then $X$ acquires a rational point over a suitable field extension of degree at most $d_1(H^3)$, for
\[
(H^3, d_1(H^3)) \ \in\  \{(1,1), \ (2,8), \ (3, 3), \ (4, 16), \ (5,1)\}.
\]
\end{lemma}
\begin{proof}
By \cite[Remarks 3.2.2(ii)]{MR1668579} we know that $\dim_{\overline k}H^0(X_{\overline k},H)=H^3+2$. 
Hence, by  Lemmas \ref{lem:index} and \ref{lem:split_line_bundle} there exists a finite extension $k'$ of $k$ of degree at most $H^3+2$ such that $H\in\pic(X_{k'})$. By \cite[Proposition 3.2.3]{MR1668579} the general member of $|H|$ is a del Pezzo surface of degree $H^3$ over $k'$, and hence acquires a rational point over a further extension of degree $H^3$ by Lemma \ref{lem:delPezzo}\ref{item:dPgeneral}. If $H^3=5$, then $X_{k'}$ has a $k'$-point by Lemma \ref{lem:delPezzo}\ref{item:dP5}.

Let $\alpha_H$ be the image of $H$ in $\Br(k)$ under the morphism $\alpha$ from \cite[(69.1)]{arXiv:1606.04368v1}. Since $2 H=-K_X\in\pic(X)$, we have $2\alpha_H=0$ in $\Br(k)$. But we have also $(H^3+2)\alpha_H=0$ (e.g., by \cite[Proposition 44]{arXiv:1606.04368v1}). Hence, if $H^3\in\{1,3,5\}$, we conclude that  $\alpha_H=0$, and we can choose $k'=k$. If $H^3=4$,  we can choose $k'$ of degree at most $4$ over $k$ by \cite[Theorem 53, Corollary 54]{arXiv:1606.04368v1}.
\end{proof}

\subsubsection{Smooth of index $1$}
\label{sec:smooth_mukai}
  If $X$ is smooth and $r(X_{\overline k})=1$, then $H=-K_X\in\pic(X)$, and $H^3$ is an even integer between $2$ and $22$, $H^3\neq 20$. Let $g:=(-K_X)^3/2+1$. Then $g$ is an integer between $2$ and $12$, $g\neq 11$. According to \cite{MR1944132}, $|H|$ is base point free and it induces a morphism $X\to\PP^{g+1}_k$.
We recall the classification from \cite[Theorem 1.10]{MR1944132}, \cite[Theorem 5.2.3, or \S12.2]{MR1668579}. See \cite{forms_ci} for the Galois descent of complete intersections.
\begin{itemize}
\item If $H^3=2$, then $g=2$ and $X\stackrel{|H|}{\longrightarrow}\PP^3_k$ is a double cover ramified along a sextic surface.

\item If $H^3=4$, then  $g=3$ and $X\stackrel{|H|}{\longrightarrow} \PP^4_k$ is either an embedding as quartic hypersurface, or a double cover of a quadric hypersurface in $\PP^4_k$.
\end{itemize}

If $g\geq 4$, $H$ is very ample.

\begin{itemize}
\item If $H^3=6$, then  $g=4$ and $X\stackrel{|H|}{\lhook\joinrel\longrightarrow}\PP^5_k$ is a complete intersection of a quadric and a cubic.

\item If $H^3=8$, then  $g=5$ and $X\stackrel{|H|}{\lhook\joinrel\longrightarrow}\PP^6_k$ is a complete intersection of three quadrics.

\item If $H^3=10$, then  $g=6$ and $X\stackrel{|H|}{\lhook\joinrel\longrightarrow}\PP^7_k$ is a $k$-form of a complete intersection of a cone over the Grassmannian $\Gr(2,5)$ under the Pl\"ucker embedding with a quadric and a linear space of codimension $3$ (cf.~\cite[Examples 5.2.2(i)]{MR1668579}).
\end{itemize}

If $g\geq7$,  a general hyperplane section of the embedding $X\stackrel{|H|}{\lhook\joinrel\longrightarrow}\PP^{g+1}_k$ cuts a  smooth K3 surface $S$ in $X$ by \cite[\S7.7]{MR1492525}  and \cite[Proposition 7.8]{MR1944132}.

\begin{itemize}
\item If $H^3=12$, then  $g=7$. Denote by $\mathcal{N}_{S/\PP^7_k}^*$  the dual of the normal bundle of $S\subseteq\PP^7_k$.
In the proof of \cite[Theorem 4.7]{MR1944132}, the vector bundle $\mathcal{N}_{S/\PP^7_k}^*\otimes\OO_S(2)$ determines a $\gal(\overline k/k)$-equivariant embedding $X_{\overline k}\subseteq\Gr(5,10)$ whose image is a linear section of an orthogonal Grassmannian $\ortgrass(5,10,Q)\subseteq\PP^{15}_{\overline k}$, where $Q$ is a $\gal(\overline k/k)$-invariant quadric  hypersurface $Q$ in $\PP^9_{\overline k}$ by a version of \cite[Corollary 2.5]{MR1363081} for $S$.
\end{itemize}

If $g\geq 8$, a general hyperplane section of $S\stackrel{|H|_S|}{\lhook\joinrel\longrightarrow}\PP^g_k$ is a smooth curve of genus $g$ which determines a Mukai-Lazarsfeld bundle $E$ on $S$ (cf.~\cite[\S1.3]{MR3051369}). 

\begin{itemize}
\item If $H^3=14$,  then $g=8$ and $E$ induces a $\gal(\overline k/k)$-equivariant embedding of $X_{\overline k}$ in $\Gr(2,6)$ such that if  $\Gr(2,6)\subset\PP^{14}_{\overline k}$ is the Pl\"ucker embedding, $X_{\overline k}$ is a linear section of $\Gr(2,6)$ by  a $\gal(\overline k/k)$-invariant linear subspace of codimension $5$.

\item If $H^3=16$,  then $g=9$ and $E$ induces a $\gal(\overline k/k)$-equivariant embedding of $X_{\overline k}$ in $\Gr(3,6)$ such that if  $\Gr(3,6)\subseteq\PP^{19}_{\overline k}$ is the Pl\"ucker embedding, $X_{\overline k}$ is a complete intersection of $\Gr(3,6)$ with the zero locus of a $\gal(\overline k/k)$-invariant global section of the second exterior power of the dual of the tautological bundle of $\Gr(3,6)$ and a $\gal(\overline k/k)$-invariant linear subspace of codimension $3$ (cf.~\cite[Example 5.1]{MR1944132}). 

\item If $H^3=18$,  then $g=10$ and $E$ induces a $\gal(\overline k/k)$-equivariant embedding of $X_{\overline k}$ in $\Gr(5,7)$ such that if $\Gr(5,7)\subseteq\PP^{20}_{\overline k}$ is the Pl\"ucker embedding, $X_{\overline k}$ is a complete intersection of $\Gr(5,7)$ with the zero locus of a $\gal(\overline k/k)$-invariant global section of the fourth exterior power of the dual of the tautological bundle of $\Gr(5,7)$ and a $\gal(\overline k/k)$-invariant linear subspace of codimension $2$ (cf.~\cite[Example 5.2]{MR1944132}). 

\item If $H^3=22$,  then $g=12$ and $E$ induces a $\gal(\overline k/k)$-equivariant embedding of $X_{\overline k}$ in $\Gr(3,7)$ such that 
$X_{\overline k}$ is the zero locus in $\Gr(3,7)$ of three linearly independent $\gal(\overline k/k)$-invariant global sections of the second exterior power of the dual of the tautological bundle of $\Gr(3,7)$ (cf.~\cite[\S5, p.15]{MR1944132}). 
\end{itemize}

Now we investigate some properties of linear sections of orthogonal Grassmannians that we can use  to study rational points in the case $g=7$.

Given a nonsingular quadric hypersurface $Q\subseteq\PP^9$, 
we denote by $\ortgrass(5,10,Q)$ the orthogonal Grassmannian of isotropic 4-dimensional linear subspaces of $\PP^9$. It is also known as spinor variety in $\PP^{15}$. We refer to \cite{MR1780430} and \cite{MR3833474} for a detailed description.

\begin{lemma}\label{lem:mukai_7}
 Let $k$ be a  field of characteristic $0$ and let $\overline k$ be a separable closure of $k$. 
Let $Q\subseteq\PP^9_k$ be a nonsingular quadric hypersurface. Let $Y\subseteq \PP^{15}_k$ be a $k$-model of the spinor embedding $\ortgrass(5,10,Q)\subseteq \PP^{15}_{\overline k}$. 
Let $L\subseteq\PP^{15}_k$ be a linear subspace of codimension $7$ such that $X:=L\cap \ortgrassfield$ is geometrically irreducible of dimension $3$. 
Then $X$ acquires a rational point over a suitable quadratic extension of $k$.
 \end{lemma}
\begin{proof}
We denote by ${\PP^{15}_k}^*$ the dual projective space as in \cite[Notation 3.1]{MR3833474} and by $Y^*\subseteq {\PP^{15}_k}^*$ the dual of $Y\subseteq \PP^{15}_k$. The dual $L^*$ of $L$ is a $6$-dimensional linear subspace of ${\PP^{15}_k}^*$, hence $L^*\not\subseteq Y^*$ by \cite[Theorem 3.2]{MR3833474}. 
Let $v$ be a $k$-point on $L^*$ that is not contained in $Y^*$, and let $L_v\subseteq\PP^{15}_k$ be the corresponding hyperplane.
By \cite[Lemma 5.10]{MR3833474} the intersection $L_v\cap Y$ contains a quadric $Q_v$ of dimension $6$ defined over $k$. Then $L\cap Q_v$ is a nonempty $k$-subvariety of degree $2$ in $L$, it is contained in $X$, and it acquires a rational point over a suitable extension of $k$ of degree at most $2$.
\end{proof}

\begin{lemma}\label{lem:mukai_12}
Let $X$ be a smooth Fano threefold  over  a field $k$ of characteristic $0$. Assume that $\rk(\pic(X_{\overline k}))=1$, $r(X_{\overline k})=1$ and $H^3=22$. 
Then $X$ acquires a rational point over a suitable field extension of degree at most $6$.
\end{lemma}

\begin{proof}
The Fano variety of conics of $X$ is a Severi Brauer surface by \cite[Proposition B.4.1]{MR3776469}. 
Hence,  $X$ contains smooth conics defined over a cubic extension of $k$ by Remark \ref{rem:brauer1},  and has rational points over a further quadratic extension.
\end{proof}

\subsubsection{Singular Gorenstein of index $1$}
\label{sec:sing_gor_rkpic_1}

\begin{defin}\label{def:decomposable}
The anticanonical linear system $|-K_X|$ of a Gorenstein Fano variety $X$ of index 1 is said to have a movable decomposition if there are Weil divisors $A$ and $B$ such that the linear systems $|A|$ and $|B|$ have positive dimension and $A+B$ is linearly equivalent to $-K_X$. If $|-K_{X_{\overline k}}|$ has no movable decomposition, for an algebraic closure $\overline k$ of the base field $k$, we say that $X$ is indecomposable. Otherwise, we say that $X$ is decomposable.
\end{defin}

Indecomposable Gorenstein terminal Fano threefolds of index 1 and geometric Picard rank 1 are classified in \cite[Theorems 1.10, 6.5]{MR1944132} 
(we remark that \cite[Proposition 7.8]{MR1944132} and \cite[Theorem 1, Theorem 2.4]{MR1675146} show that the classification is exaustive), and the bounds from Section \ref{sec:smooth_mukai} apply.

Decomposable terminal Gorenstein Fano threefolds of index 1 and geometric Picard rank 1 are not completely classified. However,
by \cite[Proposition 4.1.12]{MR1668579}, \cite[Theorem 1.5]{MR2136260} (cf.~\cite[Proposition 6.1.1]{MR3711004}) we know that the cases with $g\in\{2,3,4,5\}$ are completely classified in \cite[Theorem 6.5]{MR1944132}. 

\begin{lemma}
Let $X$ be a Gorenstein terminal $\QQ$-factorial  Fano threefold over a field $k$ of characteristic $0$. Assume that $\rk(\pic(X_{\overline k}))=1$ and $r(X_{\overline k})=1$. Then $X$ acquires a rational point over a finite extension of $k$ of degree at most $18$.
\end{lemma}
\begin{proof}
By \cite{MR1489117, MR2860279},  $X_{\overline k}$ admits a smoothing that preserves the Picard group and the degree $(-K_{X_{\overline k}})^3$. Hence, $(-K_{X})^3\in\{2,4,6,8,10,12,14,16,18,22\}$.
If $(-K_{X})^3=22$, the only decomposable cases we are interested in are classified in \cite[Theorem 1.3]{MR3535377}, and their singular locus consists of a rational point. We conclude by Lemmas \ref{lem:basepoints} and \ref{lem:mukai_12}.
\end{proof}

\subsection{Gorenstein of geometric Picard rank $>1$}
\label{sec:rkpic>1}

In this subsection let $X$ be a Gorenstein terminal
Fano threefold of geometric Picard rank $>1$ and Picard rank 1 over  a field $k$ of characteristic $0$. 

\subsubsection{Products of projective spaces}

\begin{lemma}
\label{lem:splitting_products_proj_spaces}
Let $k$ be a field of characteristic $0$, and let $X$ be a twisted form of $\prod_{i=1}^n\PP^m_k$ over $k$ such that $\rk(\pic(X))=1$. 
Then there is an extension $\tilde k$ of $k$ of degree at most $n!(m+1)$ such that $X_{\tilde k}\cong \prod_{i=1}^n\PP^m_{\tilde k}$.
\end{lemma}
\begin{proof}
Let $e_1,\dots,e_n$ be the standard generators of $\pic(X_{\overline k})$. 
From the exact sequence
\[
1\to \prod_{i=1}^n\Aut(\PP^m_k)\to\Aut(\prod_{i=1}^n\PP^m_k)\to S_n\to 1
\]
and the induced exact sequence in \'etale cohomology,
there exists a unique finite Galois extension $k'$ of $k$ of degree between $n$ and $n!$ such that $e_i$ is $\gal(\overline k/k')$-invariant for all $i\in\{1,\dots,n\}$.
For every $i\in\{1,\dots,n\}$, let $\alpha_i$ be the image of $e_i$ in $\Br(k')$ under the morphism $\alpha$ from \cite[(69.1)]{arXiv:1606.04368v1}. Then $\alpha_1,\dots,\alpha_n$ form an orbit under the conjugation action of $\gal(k'/k)$ (see \cite[Construction 3.3.12]{MR2266528}). Hence, they split over the same field extension $\tilde k$ of $k'$, which has degree at most $m+1$ by Lemma \ref{lem:split_line_bundle}.
In particular, $e_1,\dots,e_n\in\pic(X_{\tilde k})$, and the morphism $X_{\tilde k}\to \prod_{i=1}^n\PP^m_{\tilde k}$ induced by the product of the projections corresponding to $e_1,\dots,e_n$ is an isomorphism.
\end{proof}

\subsubsection{Smooth}
\label{sec:smooth_picardrk>1}
Let $X$ be a smooth Fano threefolds of geometric Picard rank $>1$ and Picard rank 1. 
We recall the classification from \cite[Theorem 1.2, \S2]{MR3100918}. 

If $\rk\pic(X_{\overline k})=2$, we have
\begin{itemize}
\item {\cite[Case 1.2.1 a), p. 421]{MR3100918}}:  $r(X_{\overline k})=1$, $H^3=12$, and $X_{\overline k}$ is a divisor of bidegree $(2,2)$ in $\PP^2_{\overline k}\times\PP^2_{\overline k}$.
\item {\cite[Case 1.2.1 b), p. 421]{MR3100918}}:  $r(X_{\overline k})=1$, $H^3=12$, and $X$ is a  double cover $\pi:X\to V$ of a variety $V$ that belongs to Case (1.2.4) with branch locus a member of $|-K_V|$. 
\item {\cite[(1.2.2)]{MR3100918}}: $r(X_{\overline k})=1$, $H^3=20$, and $X_{\overline k}$ is a complete intersection of three divisors of bidegree $(1,1)$ in $\PP^3_{\overline k}\times\PP^3_{\overline k}$.
\item {\cite[(1.2.3)]{MR3100918}}: $r(X_{\overline k})=1$, $H^3=28$, and $X_{\overline k}$ is a blow up of a quadric in $\PP^4_{\overline k}$ along a twisted quartic curve.
\end{itemize}
If $\rk\pic(X_{\overline k})=3$, we have
\begin{itemize}
\item {\cite[(1.2.4)]{MR3100918}}: $r(X_{\overline k})=2$, $H^3=6$, and $X$ is a $k$-model of a divisor of bidegree $(1,1)$ in $\PP^2_{\overline k}\times\PP^2_{\overline k}$.
\item {\cite[(1.2.5)]{MR3100918}}: $r(X_{\overline k})=1$, $H^3=12$, and $X_{\overline k}$ is a double cover of $\PP^1_{\overline k}\times\PP^1_{\overline k}\times\PP^1_{\overline k}$ ramified along an element of $|-K_{\PP^1_{\overline k}\times\PP^1_{\overline k}\times\PP^1_{\overline k}}|$.
\item {\cite[(1.2.6)]{MR3100918}}: $r(X_{\overline k})=1$, $H^3=30$, and $X_{\overline k}$ is a complete intersection of three divisors in $\PP^2_{\overline k}\times\PP^2_{\overline k}\times\PP^2_{\overline k}$ of tridegrees $(0,1,1)$, $(1,0,1)$, $(1,1,0)$, respectively.
\item {\cite[(1.2.7)]{MR3100918}}: $r(X_{\overline k})=2$, $H^3=6$, and $X_{\overline k}\cong \PP^1_{\overline k}\times\PP^1_{\overline k}\times\PP^1_{\overline k}$.
\end{itemize}
If $\rk\pic(X_{\overline k})=4$, we have
\begin{itemize}
\item \emph{\cite[(1.2.8)]{MR3100918}}: $r(X_{\overline k})=1$, $H^3=24$, and $X$ is $k$-model of a divisor of multidegree $(1,1,1,1)$ in $\PP^1_{\overline k}\times\PP^1_{\overline k}\times\PP^1_{\overline k}\times\PP^1_{\overline k}$.
\end{itemize}

\begin{lemma}
Let $n\in\{1,\dots,8\}$, and let $X$ be a smooth Fano threefold over a field $k$ of characteristic $0$ as in \cite[(1.2.$n$)]{MR3100918}. Then $X$ acquires a rational point over a suitable base field extension of degree at most $d_2(n)$, for
\[
(n,d_2(n))\in\{(2,8 ), \ (3, 4), \ (4, 6),  \ (6, 18), \ (7, 12)\}.
\]
\end{lemma}

\begin{proof}
If $n=2$, we have 
$\pic(X_{\overline k})\cong\pic(\PP^3_{\overline k}\times\PP^3_{\overline k})$ by  \cite[Corollary IV.3.3]{MR0282977} and $-K_X$ is an element of bidegree $(1,1)$. 
By Lemma \ref{lem:splitting_products_proj_spaces} there is an extension $k'$ of $k$ of degree at most $8$ such that $X_{k'}\subseteq\PP^3_{k'}\times\PP^3_{ k'}$ is defined by three bihomogeneous polynomials $f_1,f_2,f_3$ of bidegree $(1,1)$  with coefficients in $k'$ in a set of variables $(x_0,x_1,x_2,x_3;y_0,y_1,y_2,y_3)$. Evaluating $f_1,f_2, f_3$ at $y_0=y_1=y_2=y_3=1$ gives three linear forms in $4$ variables, which have a nontrivial common zero $(\tilde x_0,\tilde x_1,\tilde x_2,\tilde x_3)\neq(0,0,0,0)$ over $k'$. Then $((\tilde x_0:\tilde x_1:\tilde x_2:\tilde x_3), (1:1:1:1))$ is a $k'$-point on $X$.

For $n=3$, 
let $\varphi: X_{\overline k}\to \PP^4_{\overline k}$ be the morphism induced by the blow-up and let $H$ be the hyperplane class in $\pic(\PP^4_{\overline k})$.
By \cite[pp.426-427,432]{MR3100918} there is a quadratic extension $k'$ of $k$ such that $\varphi^*(H)$ is invariant under the $\gal(\overline k/k')$ action on $\pic(X_{\overline k})$. By \cite[Theorem 3.4]{MR3644253} there is a Severi-Brauer $k'$-variety $P$ of dimension $4$ and a morphism $\varphi':X_{k'}\to P$ that is a $k'$-model of $\varphi$. Let $\alpha_H$ be the image of $H$ in $\Br(k')$ under the morphism $\alpha$ from \cite[(69.1)]{arXiv:1606.04368v1}. Since the image of $\varphi$ is a quadric in $\PP^4_{\overline k}$, we see that $2\alpha_H=0$, but also $5\alpha_H=0$, from which we conclude that $\alpha_H=0$ and  $P\cong \PP^4_{k'}$. Then $X_{k'}$ is the blow up of a quadric $Q$ in $\PP^4_{k'}$ along a suitable quartic curve. In particular, there is a suitable quadratic extension $k''$ of $k'$ such that $Q$ has a $k''$-point $x$ and $\varphi'$ is an ismorphism around $x$.

If $n=4$, we have
$\pic(X_{\overline k})\cong\pic(\PP^2_{\overline k}\times\PP^2_{\overline k})$ by  \cite[Corollary IV.3.3]{MR0282977} and $H$ is an element of bidegree $(1,1)$. 
By Lemma \ref{lem:splitting_products_proj_spaces} there is an extension $k'$ of $k$ of degree at most $6$ such that $X_{k'}\subseteq\PP^2_{ k'}\times\PP^2_{ k'}$  
is defined by a bihomogeneous polynomial $f$ of bidegree $(1,1)$ with coefficients in $k'$ in a set of  variables $(x_0,x_1,x_2;y_0,y_1,y_2)$. Evaluating $f$ on $y_0=y_1=y_2=1$ yields a linear form in 3 variables, which has a nontrivial zero $(\tilde x_0,\tilde x_1,\tilde x_2)\neq(0,0,0)$ over $k'$. Then $((\tilde x_0:\tilde x_1:\tilde x_2),(1:1:1))$ is a $k'$-point on $X$.

If $n=6$, we have
$\pic(X_{\overline k})\cong\pic(\PP^2_{\overline k}\times\PP^2_{\overline k}\times\PP^2_{\overline k})$ by  \cite[Corollary IV.3.3]{MR0282977} and $H$ is an element of tridegree $(1,1,1)$. 
By Lemma \ref{lem:splitting_products_proj_spaces} there is an extension $k'$ of $k$ of degree at most $18$ such that $X_{k'}\subseteq\PP^2_{k'}\times\PP^2_{ k'}\times\PP^2_{k'}$ is defined by three trihomogeneous polynomials $f_1,f_2,f_3$ of tridegrees $(0,1,1)$, $(1,0,1)$, $(1,1,0)$, respectively, with coefficients in $k'$ in a set of variables $(x_0,x_1,x_2;y_0,y_1,y_2;z_0,z_1,z_2)$. Evaluating $f_1$ and $f_2$ at $z_0=z_1=z_2=1$ gives two linear forms $\tilde f_1$, $\tilde f_2$ in two distinct sets of variables. Let $(\tilde y_0,\tilde y_1,\tilde y_2)\neq(0,0,0)$ be a nontrivial zero of $\tilde f_1$ over $k'$. Evaluating $f_3$ at $y_i=\tilde y_i$ for $i\in\{0,1,2\}$ gives a linear form $\tilde f_3$. Let  $(\tilde x_0,\tilde x_1,\tilde x_2)\neq(0,0,0)$ be a nontrivial common zero of $\tilde f_2$ and $\tilde f_3$ over $k'$. Then
$((\tilde x_0:\tilde x_1:\tilde x_2),(\tilde y_0:\tilde y_1:\tilde y_2),(1:1:1))$ is a $k'$-point on $X$.

For $n=7$, 
we conclude by Lemma \ref{lem:splitting_products_proj_spaces}.
\end{proof}

\subsubsection{Singular Gorenstein}
By \cite[Theorem 6.6]{MR3100918} singular Gorenstein terminal Fano threefolds of geometric Picard rank $>1$ and Picard rank $1$ have the same description (1.2.$n$) as in Section \ref{sec:smooth_picardrk>1} for $n\in\{1,2,3,5,6,8\}$.
\begin{lemma}
Let $n\in\{1,\dots,8\}$, and let $X$ be a singular Gorenstein terminal Fano threefold over a field $k$ of characteristic $0$ of type (1.2.$n$) as in \cite[Theorem 6.6]{MR3100918}. Then $X$ acquires a rational point over a suitable base field extension of degree at most $d_3(n)$, for
\[
(n,d_3(n))\in\{(1, 12), \ (2, 20), \ (3, 18),  \ (5, 12)  \ (6, 17),  \ (8, 17)\}.
\]
\end{lemma}
\begin{proof}
Let $\overline k$ be an algebraic closure of $k$, and let $X'$ be a smoothing of $X_{\overline k}$. Since the Picard group and the degree $(-K_{X_{\overline k}})^3$ are preserved under smoothing by \cite{MR1489117, MR2860279}, 
 $X_{\overline k}$ contains at most
$$
N(X)=20-\rk\pic(X_{\overline k}) + h^{1,2}(X')
$$
singular points by \cite[(6.3.3)]{MR3711004}.
Combining Lemma \ref{lem:basepoints} with the fact that the singular locus of $X_{\overline k}$ is defined over $k$, we conclude that $X$ acquires a rational point over suitable base field extension of degree at most $\min\{N(X),(-K_X)^3\}$. We compute $N(X)$ by consulting \cite[\S\S 12.3--12.5]{MR1668579}.
\end{proof}

\subsection{Non-Gorenstein terminal}
\begin{lemma}
Let $X$ be a non-Gorenstein terminal Fano threefold over a field $k$ of characteristic $0$. Then $X$ acquires a rational point over a suitable field extension of degree at most $16$.
\end{lemma}
\begin{proof}
Let $\overline k$ be an algebraic closure of $k$. The orbit of a non-Gorenstein $\overline k$-point on $X_{\overline k}$ under the action of $\gal(\overline k/k)$ has cardinality at most $16$ (the computation can be found in the proof of \cite[Lemma 4.2.1]{MR3711004}). 
\end{proof}

\section{Field of definition of rational points}
\label{sec:extension_points}

In this section we prove Theorem \ref{thm:extension_points} and Theorem \ref{thm:extension_points_effective}. 

\begin{proof}[Proof of Theorem \ref{thm:extension_points}]
For the first statement we proceed by induction on $n$. If $n=0$ there is nothing to prove. Let $n>0$.
Since the degree of a projective variety is encoded in the Hilbert polynomial, by Proposition \ref{prop:boundedness} there exists a positive integer $d'$ such that for every terminal Fano variety $X$ of dimension $n$ there exists an embedding $X\subseteq\PP^N_k$ of degree $\leq d'$.
By Bertini's theorem $n$ general hyperplanes of $\PP^N_k$ intersect $X$ in a smooth subvariety of dimension 0 that acquires a rational point after a field extension of degree $\leq d'$.
 We retrace the proof of Theorem \ref{thm:equivalent_formulation} to show that we can take 
 \begin{equation}\label{eq:extension_points}
 d(n)=d(n-1)\max\{d'(n), \max_{1\leq m\leq n-1}d(m)d(n-m)\},
 \end{equation}
 where $d'(n)$ is a positive integer (for example $d'$ above) such that for every field $k$ of characteristic 0, every terminal $\QQ$-factorial Fano variety of dimension $n$ and Picard rank 1 over $k$ acquires rational points over a suitable field extension of degree at most $d'(n)$ over $k$.
  Let $X$ be a proper rationally connected variety of dimension $n$. By the argument in the proof of Lemma \ref{lem:conj_sing} we can assume without loss of generality that $X$ is smooth and projective.
Let $f:X'\to Y$ be the fibration with $X'$ birationally equivalent to $X$ provided by Proposition \ref{prop:building_block}, and let $X''\to X'$ be a resolution of singularities.
Then there exists a field extension $k\subseteq k'$ of degree at most $ \max\{d'(n), \max_{1\leq m\leq n-1}d(m)d(n-m)\}$ such that $X'(k')\neq\emptyset$. Indeed,
if $\dim Y=0$, then $X'$ acquires a rational point after a suitable field extension of degree at most $ d'(n)$,  as $X'$ is a terminal $\QQ$-factorial Fano $k$-variety of dimension $n$ and Picard rank 1. If $\dim Y=m>0$, then by induction hypothesis $Y$ acquires a rational point after a suitable field extension of degree at most $d(m)$ of $k$ and the fiber of $X'\to Y$ over such a point acquires a rational point after a further suitable field extension of degree at most $ d(n-m)$ by \cite[Theorem 1.2]{MR2491906} and the induction hypothesis.
 By \cite[Theorem 1.3]{MR2491906} and the induction hypothesis there exists a finite field extension $k'\subseteq k''$ of degree $\leq d(n-1)$ such that $X''(k'')\neq\emptyset$. Then $X(k'')\neq\emptyset$ by \cite{MR0095851}.
 \end{proof}
 
 \begin{proof}[Proof of Theorem \ref{thm:extension_points_effective}]
 The bound $\tilde d(1)=2$ is immediate, because all conics acquire rational points on suitable quadratic extensions and $\{x^2+y^2+z^2=0\}$ defines a conic without rational points over $\QQ$. For $n=2$, by resolution of singularities,  \cite{MR0095851}, and the birational classification of proper smooth surfaces \cite{MR525940},  it suffices to find a bound for  del Pezzo surfaces and rational conic bundles.
 Hence, $\tilde d(2)=6$ follows from Lemmas \ref{lem:conicbundle} and  \ref{lem:delPezzo}. 
 
 For $n=3$, let $d'(3)$ be the bound for terminal Fano threefolds of Picard rank $1$. Then $d'(3)\leq 24$ by the results in Section \ref{sec:fano_3folds_existence}, and \eqref{eq:extension_points} gives   $\tilde d(3)\leq 6\max\{24,12\}=144$.
\end{proof}

\section{Varieties with points over $C_1$ fields}
\label{sec:varieties_with_points}

In this section we find rational points on a number of rationally connected varieties over $C_1$ fields. The results in this section will be used for the proof of Theorem \ref{thm:3dim_Fano_gor}.

\begin{remark}\label{rem:brauer}
We recall that $C_1$ fields have cohomological dimension $\leq1$ and hence trivial Brauer group  \cite[\S II.3.2]{MR1867431}.
In particular, projective spaces have no nontrivial forms over $C_1$ fields  \cite[\S5]{MR2266528}. 
We also recall that algebraic extensions of $C_1$ fields are $C_1$ \cite[Corollary to Theorem 5]{MR0046388}.
\end{remark}

\subsection{Toric varieties}
We prove that $k$-forms of toric varieties have rational points over $C_1$ fields $k$ of characteristic $0$.

 \begin{prop}\label{prop:toric}
 Let $k$ be a field of characteristic 0 of cohomological dimension $\leq 1$. Let $\overline k$ be a separable closure of $k$. Let $X$ be a $k$-variety such that $X_{\overline k}$ is isomorphic to a proper $\QQ$-factorial toric $\overline k$-variety. Then $X$ is an equivariant compactification of a $k$-torus and has a smooth $k$-point.
 \end{prop}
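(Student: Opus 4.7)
The plan is to find a $k$-torus $T_0$ acting on $X$ whose open dense orbit is a trivial $T_0$-torsor. The identity element will then provide the desired smooth $k$-point and display $X$ as an equivariant compactification of $T_0$. I would assemble three ingredients: (i) the Demazure--Cox description of $\aut^0$ of a complete simplicial toric variety, in which the open torus is a maximal torus; (ii) the existence of maximal $k$-tori in connected linear algebraic $k$-groups in characteristic $0$; and (iii) Serre's theorem that $H^1(k,G)=0$ for every connected linear algebraic $k$-group $G$ over a perfect field of cohomological dimension $\leq 1$.

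First I would set $G := \aut^0_k(X)$, the identity component of the automorphism group scheme of the proper $k$-variety $X$ (representable by Grothendieck, and affine because $G_{\overline k}=\aut^0(X_{\overline k})$ is linear by (i)). By (ii), $G$ contains a maximal $k$-torus $T_0$, whose base change $T_{0,\overline k}$ is $G(\overline k)$-conjugate to $T_{\overline k}$ and so acts on $X_{\overline k}$ with a unique open dense orbit $U_{\overline k}$. Since the $T_0$-action on $X$ is defined over $k$ and $U_{\overline k}$ is characterized as the unique open $T_{0,\overline k}$-orbit, $U_{\overline k}$ is Galois-stable and descends to an open subvariety $U\subseteq X$, which is naturally a $T_0$-torsor over $\spec k$.

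By (iii), $H^1(k,T_0)=0$, so $U$ is the trivial torsor: $U(k)\neq\emptyset$, and any $e\in U(k)$ induces a $T_0$-equivariant isomorphism $U\cong T_0$. The point $e$ is smooth on $X$, and together with the $T_0$-action exhibits $X$ as an equivariant compactification of $T_0$. The main obstacle I anticipate is justifying (i) in the $\QQ$-factorial (possibly singular) setting: Demazure's original computation is stated for smooth complete toric varieties, and the extension to the simplicial case relies on Cox's description of $\aut^0$ via the Cox ring of the fan and its Demazure roots. The other two ingredients are classical.
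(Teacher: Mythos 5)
Your argument is correct, but it takes a genuinely different route from the paper. The paper never works with the automorphism scheme of $X$ itself: it classifies the $k$-forms of the split toric variety $Y=Y_\Sigma$ by $H^1_{\et}(k,\Aut_Y)$, uses Cox's theorem that $\Aut_Y$ is linear together with Serre's vanishing for connected groups over fields of cohomological dimension $\leq 1$ to reduce to $H^1_{\et}(k,\Aut_Y/\Aut_Y^0)$, lifts along the surjection $\aut(\ZZ^n,\Sigma)\to\Aut_Y/\Aut_Y^0$, and then invokes the theory of arithmetic toric varieties to see that every such twist is a normal variety with a faithful torus action having a dense orbit; Rosenlicht's theorem identifies that orbit as a torsor under a torus, and Serre's vanishing (applied a second time, now to the torus) produces the point. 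You instead bypass the classification of forms entirely: Grothendieck's existence of maximal $k$-tori in $G=\aut^0_{X/k}$, plus the Demazure--Cox fact that the big torus is a maximal torus of $\aut^0(X_{\overline k})$, hands you a $k$-torus whose open orbit descends and is a trivial torsor by Serre's vanishing. Your route is shorter and avoids both the nonabelian cohomology bookkeeping and the reference to arithmetic toric varieties, at the cost of needing representability of $\Aut_{X/k}$ for proper (not necessarily projective) $X$ — for which Matsumura--Oort rather than Grothendieck's projective result is the right citation — and the maximality of the big torus in $\Aut^0$ in the simplicial, possibly non-smooth, setting; as you note, that last point is exactly where Cox's extension of Demazure's computation is needed, and it is the same input (Cox's Corollary 4.7) that the paper relies on for linearity and for the component-group surjection. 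The paper's approach yields the extra structural information that every form is a twist by a cocycle valued in lattice automorphisms, which your argument does not recover, but both proofs ultimately rest on the same two pillars: the Demazure--Cox structure of the automorphism group and Serre's $H^1$-vanishing over fields of cohomological dimension $\leq 1$.
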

 \begin{proof}
 Let $\Sigma$ be a fan in $\ZZ^n$ and $Y$ the split toric $k$-variety associated to $\Sigma\subseteq\ZZ^n$. Assume that $Y$ is proper and $\QQ$-factorial (i.e., $\Sigma$ is simplicial and $\bigcup_{\sigma\in\Sigma}\sigma=\RR^n$).
 By \cite[Proposition III.1.3.5]{MR1867431}, the $k$-forms of $Y$ are classified by $H^1_{\text{\it\'et}}(k, \Aut_Y)$ up to isomorphism. 
 By \cite[Corollary 4.7]{MR1299003}  $\Aut_Y$ is a $k$-linear algebraic group. 
 Hence, $H^1_{\text{\it\'et}}(k, \Aut_Y)\cong H^1_{\text{\it\'et}}(k, \Aut_Y/\Aut_Y^0)$
 by \cite[Corollary III.2.4.3]{MR1867431}.
 By \cite[Corollary 4.7]{MR1299003}, there is a surjective homomorphism $\aut(\ZZ^n,\Sigma)\to \Aut_{Y}/\Aut_{Y}^0$, where $\aut(\ZZ^n,\Sigma)$ is the group of lattice automorphisms of $\ZZ^n$ that preserve the fan $\Sigma$. Therefore, there is a surjective map $H^1_{\text{\it\'et}}(k, \aut(\ZZ^n,\Sigma))\to H^1_{\text{\it\'et}}(k, \Aut_{Y}/\Aut_{Y}^0)$ by \cite[Corollary III.2.4.2]{MR1867431}.
 Moreover, to every element of $H^1_{\text{\it\'et}}(k, \aut(\ZZ^n,\Sigma))$ correspond an isomorphism class of normal $k$-varieties with a faithful action of a $k$-torus that has a dense orbit by \cite[Theorems 3.2 and 3.4]{MR3163576}. Hence, every $k$-form $X$ of $Y$ is a normal $k$-variety with a faithful action of a $k$-torus that has a dense orbit $U$. By \cite[Theorem 6]{MR651645}, $U$ is a principal homogeneous space under a torus $T$. Since tori are connected linear algebraic groups by \cite[p. 114]{MR1102012}, $H^1_{\text{\it\'et}}(k,T)=0$ by \cite[Corollary III.2.4.3]{MR1867431}. Therefore, $U\cong T$ has a $k$-point (the unit element). Since tori are smooth, $X$ has a smooth $k$-point.
 \end{proof}
Note that the assumption on the characteristic of $k$ ensures that the automorphism group of the toric variety is smooth. The author does not know whether smoothness and the results in \cite[Corollary 4.7]{MR1299003} hold in positive characteristics.
The $\QQ$-factoriality assumption can be removed using \cite[Theorem 7.8]{arXiv:1809.09070v1}.

\subsection{Intersections of low degree in products of weighted projective spaces}
\label{subsection:products_wproj}

We generalize \cite[Theorem 4]{MR0046388} and \cite[Theorem IV.6.7]{MR1440180} to orbit complete intersections (in the sense of \cite{forms_ci}) in forms of products of weighted projective spaces over $C_1$ fields of arbitrary characteristic.

We first study the forms of products of projective spaces over $C_1$ fields.

\begin{lemma}\label{lem:forms_product_proj_space}
Let $k$ be a field of cohomological dimension $\leq 1$. Let $\overline k$ be a separable closure of $k$.
Let $N$, $n_1,\dots,n_N$ and $m_1<\dots<m_N$ be positive integers. Then  
$H^1_\et\left(k,\Aut_{\prod_{i=1}^N\prod_{j=1}^{n_i}\PP^{m_i}_{k}}\right)=\prod_{i=1}^N H^1_\et\left(k,S_{n_i}\right)$.
\end{lemma}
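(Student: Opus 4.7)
The plan is to describe the automorphism group scheme of $Y := \prod_{i=1}^N(\PP^{m_i}_k)^{n_i}$ explicitly as a semidirect product, and then analyze the induced map on $H^1_\et$ using the splitting together with the vanishing of Brauer groups of finite separable extensions of $k$.

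\textbf{Step 1: Structure of $\Aut_Y$.} Since the $m_i$ are strictly increasing, the projective spaces $\PP^{m_i}$ and $\PP^{m_j}$ are non-isomorphic for $i\neq j$. Hence for any $k$-scheme $T$, every $T$-automorphism of $Y_T$ must preserve the partition of the factors by dimension, permuting the $n_i$ copies of $\PP^{m_i}$ and acting on each copy by an element of $\PGL_{m_i+1}$. This gives an isomorphism of $k$-group schemes
\[
\Aut_Y \;\cong\; K\rtimes H,\qquad K:=\prod_{i=1}^N \PGL_{m_i+1}^{n_i},\qquad H:=\prod_{i=1}^N S_{n_i},
\]
with $H$ acting on $K$ by permuting the appropriate factors.

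\textbf{Step 2: The long exact sequence.} The short exact sequence $1\to K\to \Aut_Y\to H\to 1$ is split by the natural inclusion $H\hookrightarrow \Aut_Y$. Applying non-abelian étale cohomology, the induced map of pointed sets
\[
p\colon H^1_\et(k,\Aut_Y)\longrightarrow H^1_\et(k,H)=\prod_{i=1}^N H^1_\et(k,S_{n_i})
\]
therefore admits a section $s$, and in particular $p$ is surjective. The last equality uses that $H$ is a direct product of constant group schemes.

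\textbf{Step 3: Fibers of $p$ are trivial.} Following the standard description of fibers in non-abelian cohomology (e.g.\ \cite[\S I.5.5]{MR1867431}), for every $[c]\in H^1_\et(k,H)$ the fiber $p^{-1}([c])$ is a quotient of $H^1_\et(k,K^c)$, where $K^c$ is the inner form of $K$ twisted by a lift of $c$ through the splitting. I will show $H^1_\et(k,K^c)=0$. A cocycle representing $c$ corresponds, for each $i$, to a continuous action of $\gal(\overline k/k)$ on $\{1,\dots,n_i\}$; writing its orbits as $\gal(\overline k/k)/\gal(\overline k/L_{i,\alpha})$ for finite separable extensions $L_{i,\alpha}/k$, one identifies
\[
K^c \;\cong\; \prod_{i=1}^N\prod_\alpha R_{L_{i,\alpha}/k}\PGL_{m_i+1}.
\]
By Shapiro's lemma, $H^1_\et(k,R_{L/k}\PGL_{m+1})\cong H^1_\et(L,\PGL_{m+1})$, which embeds into $\Br(L)$. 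Since finite separable extensions of a field of cohomological dimension $\leq 1$ again have cohomological dimension $\leq 1$, Remark \ref{rem:brauer} yields $\Br(L_{i,\alpha})=0$, and hence $H^1_\et(k,K^c)=0$.

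\textbf{Step 4: Conclusion.} Each fiber of $p$ is a singleton, so $p$ is injective; combined with the surjectivity from the splitting, $p$ is a bijection, proving the lemma. The main subtlety is the third step: one must carefully identify $K^c$ as a product of Weil restrictions (rather than an unwieldy inner form) and invoke Shapiro's lemma plus the vanishing of $\Br$ on finite separable extensions. Once this reduction to $\PGL$-coefficients over extensions of $k$ is in place, everything else is formal.
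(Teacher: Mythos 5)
Your proof is correct and follows essentially the same route as the paper: decompose $\Aut_Y$ as an extension of $\prod_{i=1}^N S_{n_i}$ by a product of $\PGL$'s (the factors grouped by the distinct dimensions $m_i$) and use the vanishing of $H^1$ with $\PGL$-coefficients over fields of cohomological dimension $\leq 1$. If anything, your Step 3 — twisting by a lift of $c$ through the splitting, identifying the twisted kernel as a product of Weil restrictions, and applying Shapiro's lemma together with $\Br(L)=0$ for finite separable $L/k$ — spells out the injectivity on all fibers that the paper compresses into a short exact-sequence display and a citation of Giraud, so your write-up is the more complete of the two.
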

\begin{proof}
Since automorphisms preserve the effective cone in $\pic(Y_{\overline k})$ and the intersection product, $\aut(\prod_{i=1}^N\prod_{j=1}^{n_i}\PP^{m_i}_{k})=\prod_{i=1}^N\aut(\prod_{j=1}^{n_i}\PP^{m_i}_{ k})$ and for every $i\in\{1,\dots,N\}$ there is
an exact sequence of groups
\begin{equation*}
1\to\prod_{j=1}^{n_i}\aut\left(\PP^{m_i}_{ k}\right)\to\aut\left(\prod_{j=1}^{n_i}\PP^{m_i}_{ k}\right)\to S_{n_i}\to 1.
\end{equation*}
This yields exact sequences
\begin{equation*}
\prod_{j=1}^{n_i} H^1_\et\left(k,\Aut_{\PP^{m_i}_{ k}}\right)\to H^1_\et\left(k,\Aut_{\prod_{j=1}^{n_i}\PP^{m_i}_{ k}}\right)\to H^1_\et\left(k,S_{n_i}\right).
\end{equation*}
for every $i\in\{1,\dots,N\}$, where
the second arrow is surjective by \cite[Corollary III.2.4.2]{MR1867431}, and $H^1_{\et}(k,\Aut_{\PP^{m_i}_{ k}})=0$ as $\Br(k)=0$ (cf.~Remark \ref{rem:brauer}). 
\end{proof}

\begin{notation}\label{notation:permutation_action}
For positive integers $a_1,\dots,a_m$, we denote by $\PP_k(a_1,\dots,a_m)$ the $(m-1)$-dimensional weighted projective space over $k$ with weights $a_1,\dots,a_m$.

For every $N$-tuples of positive integers $\mathbf{n}=(n_1,\dots,n_N)$ and $(m_1,\dots,m_N)$ and every $(\sum_{i=1}^Nm_i)$-tuple of positive integers $\mathbf{a}=(a_{i,l})_{\substack{1\leq i\leq N\\1\leq l\leq m_i}}$, 
we denote by $\PP(\mathbf{n},\mathbf{a}) $ a $k$-variety such that 
\[
\PP(\mathbf{n},\mathbf{a}) _{\overline k} \cong \prod_{i=1}^N\prod_{j=1}^{n_i}\PP_{\overline k}(a_{i,1},\dots,a_{i,m_i})
\]
 with a $\gal(\overline k/k)$-action by permutation of the factors with exactly $N$ orbits given by $\prod_{j=1}^{n_i}\PP_{\overline k}(a_{i,1},\dots,a_{i,m_i})_{\overline k}$ for all $i\in\{1,\dots,N\}$. That is, if we write $(x_{i,j,l})_{\substack{1\leq i\leq N\\1\leq j\leq n_i\\ 1\leq l\leq m_i}}$ for the coordinates (here  $x_{i,j,1}, \dots, x_{i,j,m_i}$ are the coordinates in the $j$-th factor $\PP_{\overline k}(a_{i,1},\dots,a_{i,m_i})$ of the $i$-th orbit), then $\gal(\overline k/k)$ acts on the coordinates by permutations of the second index in a way that $\{x_{i,1,l},\dots,x_{i,n_i,l}\}$ forms an orbit for all $i\in\{i,\dots,N\}$ and all $l\in\{1,\dots,m_i\}$.
\end{notation}

\begin{remark}\label{rem:forms_product_proj_space}
Lemma \ref{lem:forms_product_proj_space} together with \cite[Proposition III.1.3.5]{MR1867431}  shows that all forms of products of projective spaces over a field $k$ of cohomological dimension $\leq 1$ are isomorphic to $\PP(\mathbf{n},\mathbf{a})$ for some $\mathbf{n}$ and $\mathbf{a}=(1,\dots,1)$.
\end{remark}

 \begin{prop}\label{prop:i_prod_wproj}
Let $k$ be a $C_1$-field that admits normic forms of every degree. Let $\overline k$ be a separable closure of $k$. 
 Let $\mathbf{n}$ and $\mathbf{a}$ as in Notation \ref{notation:permutation_action}.
 Let $X\subseteq \PP(\mathbf{n},\mathbf{a})$ be a subvariety such that $X_{\overline k}$ is an intersection of hypersurfaces  $H_1,\dots,H_s$ in $\PP(\mathbf{n},\mathbf{a})_{\overline k}$ of weighted multidegrees $(d_{i,j,t})_{\substack{1\leq i\leq N\\1\leq j\leq n_i}}$ for $t\in\{1,\dots,s\}$, 
such that $\sum_{t=1}^sd_{i,j,t}< \sum_{l=1}^{m_i}a_{i,l}$ for all $i\in\{1,\dots,N\}$ and all $j\in\{1,\dots,n_i\}$.
Assume that for some $1\leq s_1<\dots< s_r:=s$, $s_0:=0$, the hypersurfaces $H_{s_{i-1}+1},\dots,H_{s_{i}}$ form an orbit under the $\gal(\overline k/k)$-action on $\PP(\mathbf{n},\mathbf{a})_{\overline k}$
for all $i\in\{1,\dots,r\}$.
 Then $X$ has a $k$-rational point.
\end{prop}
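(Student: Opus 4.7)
The plan is to parametrize $k$-points of $\PP(\mathbf n,\mathbf a)$ via a Weil restriction, to convert Galois orbits of the defining hypersurfaces into single equations defined over $k$, and finally to apply a Tsen--Lang type statement for $C_1$ fields admitting normic forms of every degree.

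First I would fix a finite Galois extension $L/k$ inside $\overline k$ through which the Galois action on $\PP(\mathbf n,\mathbf a)_{\overline k}$ and on the polynomials $H_1,\dots,H_s$ both factor, and put $G=\gal(L/k)$. For each $i\in\{1,\dots,N\}$ the transitive $G$-action on the $i$-th orbit of projective factors gives an étale $k$-algebra $K_i$, which is a field of degree $n_i$ over $k$, and Galois descent (together with the shape of the forms from Remark~\ref{rem:forms_product_proj_space}) yields
\[
\PP(\mathbf n,\mathbf a)\;\cong\;\prod_{i=1}^{N}\mathrm{Res}_{K_i/k}\,\PP_{K_i}(a_{i,1},\dots,a_{i,m_i}).
\]
A $k$-point of $\PP(\mathbf n,\mathbf a)$ is therefore described by tuples $(\xi_{i,1},\dots,\xi_{i,m_i})\in K_i^{m_i}$, one per $i$, not all zero in each block, taken modulo the $K_i^\times$-action of weights $(a_{i,l})_l$. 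For each orbit $\{H_{s_{u-1}+1},\dots,H_{s_u}\}$ of hypersurfaces I would set $\tilde H_u:=\prod_{t=s_{u-1}+1}^{s_u}H_t$; this product is Galois-invariant and hence descends to a hypersurface of $\PP(\mathbf n,\mathbf a)$ defined over $k$. At any $k$-point of $\PP(\mathbf n,\mathbf a)$, Galois conjugacy forces all $H_t$ in the orbit to vanish as soon as one of them does, so $X(k)$ coincides with the common $k$-zero locus of $\tilde H_1,\dots,\tilde H_r$.

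Substituting the Weil-restriction parametrization and expanding each $\xi_{i,l}\in K_i$ in a $k$-basis (which contributes $n_i$ $k$-variables per pair $(i,l)$) converts $\tilde H_u$ into a polynomial $F_u$ in $\sum_i n_im_i$ $k$-variables, grouped in $N$ blocks. Compatibility of the scaling action with the Galois action, together with Galois invariance of $\tilde H_u$, shows that $F_u$ is weighted-homogeneous in the $i$-th block of weighted degree $n_iD_{u,i}$, where $D_{u,i}:=\sum_{t=s_{u-1}+1}^{s_u}d_{i,j,t}$ is independent of $j$ (again by Galois invariance) and where the $k$-variables arising from $\xi_{i,l}$ carry weight $a_{i,l}$. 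The hypothesis $\sum_{t=1}^{s}d_{i,j,t}<\sum_{l=1}^{m_i}a_{i,l}$ then gives
\[
\sum_{u=1}^{r} n_iD_{u,i}\;<\;n_i\sum_{l=1}^{m_i}a_{i,l}
\]
for every $i$, i.e., the total weighted degree of the system $\{F_u\}_u$ restricted to the $i$-th block is strictly less than the total weight of that block.

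Finding a $k$-point of $X$ thus reduces to finding a common zero of $F_1,\dots,F_r$ over $k$ in $\prod_i(\A^{n_im_i}_k\setminus\{0\})$ subject to the block-wise weighted-degree bound above. This is an instance of the Tsen--Lang theorem for systems of multi-weighted-homogeneous forms on a product of weighted affine spaces over a $C_1$ field admitting normic forms of every degree: one uses normic forms to collapse weighted homogeneity to ordinary homogeneity block by block and to combine forms of different degrees into a single form, after which the $C_1$ property of $k$ delivers a nontrivial simultaneous zero. Lifting this zero back through the Weil-restriction parametrization produces a $k$-point of $X$. The hardest step is the careful formulation and justification of this multi-block, weighted Tsen--Lang statement and its reduction to the standard $C_1$ property; this is the point at which the hypothesis on normic forms is used essentially.
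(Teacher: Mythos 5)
Your overall strategy coincides with the paper's: replace each Galois orbit of hypersurfaces by the product of its members (which descends to $k$), note that a Galois-invariant point lying on one member of an orbit lies on all of them, descend the resulting system to $k$ via a Galois-equivariant linear change of variables (your Weil-restriction parametrization is exactly the paper's choice of a $\gal(\overline k/k)$-invariant basis of each span $\sum_{j} x_{i,j,l}\overline k$), and finish with the weighted Tsen--Lang theorem. The degree bookkeeping and the key observation that a Galois orbit of coordinates is either entirely zero or entirely nonzero (your ``$K_i$ is a field'') also match the paper.

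The gap is in the final step, and it is the step you yourself flag as the hardest. The ``Tsen--Lang theorem for systems of multi-weighted-homogeneous forms on a product of weighted affine spaces'' is not an instance of the standard statement \cite[Theorem IV.6.7]{MR1440180}, and the reduction you sketch --- collapse weights with normic forms, combine the forms into one, apply the $C_1$ property --- only yields a zero that is nontrivial overall, not one that is nontrivial in \emph{every} block, which is what membership in $\prod_i(\A^{n_im_i}_k\smallsetminus\{0\})$ requires. Already for a single form of bidegree $(1,1)$ on $\PP^m_k\times\PP^m_k$, the combined form of degree $2$ in $2m+2$ variables admits the useless zero $(x;0)$. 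The paper closes exactly this gap by induction on the number $N$ of blocks (its Step 2): the forms of degree zero in the last block involve only the first $N-1$ blocks and are solved inductively there; the solution is substituted into the remaining forms; and the resulting single-block weighted system, whose total degree is still below the total weight of the last block, is solved by \cite[Theorem IV.6.7]{MR1440180}. With that induction supplied in place of the asserted multi-block theorem, your argument becomes a complete proof along the same lines as the paper's.
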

\begin{proof}
 Let
$
R=\overline k[x_{i,j,l}]_{\substack{1\leq i\leq N\\1\leq j\leq n_i\\1\leq l\leq m_i}}
$
be the weighted coordinate ring of $\PP(\mathbf{n},\mathbf{a})_{\overline k}$ (here the variable $x_{i,j,l}$ corresponds to the $l$-th coordinate in the $j$-th factor $\PP_{\overline k}(a_{i,1},\dots,a_{i,m_i})$ of the $i$-th orbit), then the group $\gal(\overline k/k)$ acts on the variables by permutations of the second index.
Let $f_1,\dots,f_s\in R$ that define $H_1,\dots,H_s$, respectively, and such that $\{f_{s_{i-1}+1},\dots,f_{s_i}\}$ is an orbit under the $\gal(\overline k/k)$-action on $R$ for all $i\in\{1,\dots,r\}$. 

\emph{Step 1.} Assume that $f_1,\dots,f_s$ are $\gal(\overline k/k)$-invariant and $N=1$. 
For each $l\in\{1,\dots,m_1\}$, the $\overline k$-vector space $\sum_{1\leq j\leq n_1}x_{1,j,l}\overline k$ is a $\gal(\overline k/k)$-invariant subset of $R$, hence it has a basis $z_{1,l},\dots,z_{n_1,l}$ consisting of $\gal(\overline k/k)$-invariant elements of degree $a_{1,l}$. Then a linear change of variables gives an isomorphism $\varphi:R\to\overline k[z_{j,l}]_{\substack{1\leq j\leq n_1\\1\leq l\leq m_1}}$ of $\overline k$-algebras such that, for every $t\in\{1,\dots,s\}$, $\varphi(f_t)$ is a weighted homogeneous  polynomial of weighted degree $\sum_{j=1}^{n_1}d_{1,j,t}$ in the variables $z_{1,1},\dots,z_{n_1,m_1}$.
Since $\sum_{\substack{1\leq j\leq n_1\\1\leq t\leq s}}d_{1,j,t}<n_1\sum_{l=1}^{m_1}a_{1,l}$, the system of equations $\varphi(f_1)=\dots=\varphi(f_s)=0$ has a solution $(\tilde z_{1,1},\dots,\tilde z_{n_1, m_1})\in k^{n_1m_1}\smallsetminus\{0\}$ by \cite[Theorem IV.6.7]{MR1440180}. 
Hence, the system $f_1=\dots=f_s=0$ has a $\gal(\overline k/k)$-invariant solution $(\tilde x_{1,j,l})_{\substack{1\leq j\leq n_1\\1\leq l\leq m_1}}\in\overline k^{n_1m_1}$ with $\tilde x_{1,\tilde j,\tilde l}\neq0$ for some $(\tilde j, \tilde l)\in\{1,\dots,n_1\}\times\{1,\dots,m_1\}$. 
Since $\{\tilde x_{1,1,\tilde l},\dots,\tilde x_{1,n_1,\tilde l}\}$ is an orbit under the $\gal(\overline k/k)$-action, then $\tilde x_{1,j,\tilde l}\neq0$ for all $j\in\{1,\dots,n_1\}$. 
Thus $(\tilde x_{1,j,l})_{\substack{1\leq j\leq n_1\\1\leq l\leq m_1}}$ defines a $\gal(\overline k/k)$-invariant point in $X_{\overline k}(\overline k)$, and hence a $k$-rational point on $X$.

\emph{Step 2.} Assume that $f_1,\dots,f_s$ are $\gal(\overline k/k)$-invariant and $N$ is arbitrary. We proceed by induction on $N$. The case $N=1$ is  Step 1. Assume that $N>1$. 
Let $F\subseteq\{f_1,\dots,f_s\}$ be the set of elements $f_t$ such that $d_{N,j,t}=0$ for all $j\in\{1,\dots,n_N\}$. 
We observe that $F$ is invariant under the $\gal(\overline k/k)$-action on $R$.
If $F\neq\emptyset$,
by hypothesis of induction the subvariety  $V(F)\subseteq\prod_{i=1}^{N-1}\prod_{j=1}^{n_i}\PP_{\overline k}(a_{i,1},\dots,a_{i,m_i})$ contains a $\gal(\overline k/k)$-invariant point with coordinates $(\tilde x_{i,j,l})_{\substack{1\leq i\leq N-1\\1\leq j\leq n_i\\1\leq l\leq m_i}}\in \overline k^{\sum_{i=1}^{N-1}n_im_i}$.
 If $F=\emptyset$, let $\tilde x_{i,j,l}:=1$ for all $i\in\{1,\dots,N-1\}$, $j\in\{1,\dots,n_i\}$, $l\in\{1,\dots,m_i\}$. Evaluating $f_1,\dots,f_s$ in $x_{i,j,l}=\tilde x_{i,j,l}$ for all $i\in\{1,\dots,N-1\}$, $j\in\{1,\dots,n_i\}$, $l\in\{1,\dots,m_i\}$, yields a system  of $\gal(\overline k/k)$-invariant forms $f'_1,\dots,f'_s$ in $\overline k[x_{N,j,l}]_{\substack{1\leq j\leq n_N\\1\leq l\leq m_N}}$. By Step 1 the subvariety  $V(f'_1, \dots, f'_s)\subseteq\prod_{j=1}^{n_N}\PP_{\overline k}(a_{N,1},\dots,a_{N,m_N})$ contains a $\gal(\overline k/k)$-invariant point with coordinates $(\tilde x_{N,j,l})_{\substack{1\leq j\leq n_N\\1\leq l\leq m_N}}\in \overline k^{n_Nm_N}$. Then $(\tilde x_{i,j,l})_{\substack{1\leq i\leq N\\1\leq j\leq n_i\\1\leq l\leq m_i}}$ defines a $\gal(\overline k/k)$-invariant point in $X_{\overline k}(\overline k)$, hence, a $k$-point on $X$.

\emph{Step 3.}
No restrictions on $f_1,\dots,f_s$. 
For $l=1,\dots,r$, let $h_l:=\prod_{t=s_{l-1}+1}^{s_l}f_t$. Then $h_l\in R$ is a $\gal(\overline k/k)$-invariant weighted multihomogeneous element of $R$ of weighted degree $(\sum_{t=s_{l-1}+1}^{s_l}d_{i,j,t})_{\substack{1\leq i\leq N\\1\leq j\leq n_i}}$.
The subvariety of $Y_{\overline k}$ defined by $h_1=\dots=h_r=0$ has a $\gal(\overline k/k)$-invariant $\overline k$-point $\widetilde x$ by Step 2.
The point $\widetilde x$  belongs to at least one hypersurface in each orbit under the $\gal(\overline k/k)$-action on $\{H_1,\dots,H_s\}$, hence it belongs to all. So $\widetilde x$ defines a $k$-point on $X$.
\end{proof}

\subsection{Some cyclic coverings}\label{sec:sec:cyclic_covering}
We study rational points over $C_1$ fields for  cyclic coverings of complete intersections in forms of products of projective spaces that have Picard rank 1 over the base field.
The assumption on the Picard rank is essential for the proof.

\begin{prop}\label{prop:cyclic}
Let $k$ be a $C_1$ field that admits normic forms of every degree. Let $\overline k$ be a separable closure of $k$. Let $Y$ be a $k$-variety such that $Y_{\overline k}\cong\prod_{i=1}^n\PP^{m}_{\overline k}$ and $\rk\pic(Y)=1$. Let $V\subseteq Y$ be an intersection of $r$ hypersurfaces of degrees $d_1(1,\dots,1),\dots,d_r(1,\dots,1)$ in $\pic(Y_{\overline k})$.
Let $X\to V$ be a cyclic covering of degree $d$ with reduced ramification divisor given by the restriction to $V$ of a hypersurface of $Y$ of degree $d_0(1,\dots,1)$ in $\pic(Y_{\overline k})$. 
If $\sum_{i=0}^rd_i\leq m+\frac{d_0}d$, then $X(k)\neq\emptyset$.
\end{prop}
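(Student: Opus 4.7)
The plan is to realize $X$ as a weighted complete intersection in a weighted projective space over $k$ and then apply Kollár's weighted version of Lang's theorem \cite[Theorem IV.6.7]{MR1440180}, after descending everything to $k$-rational coordinates via the trick from Step 1 of the proof of Proposition \ref{prop:i_prod_wproj}. The key observation is that the cyclic cover relation $t^d - f_0 = 0$ becomes an additional weighted hypersurface once $t$ is adjoined as an extra coordinate of weight $nd_0/d$, and the hypothesis $\sum_{i=0}^r d_i \leq m + d_0/d$ is then exactly the degree condition demanded by weighted Lang.

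First I would descend the data to $k$. Since $k$ has cohomological dimension $\leq 1$ (Remark \ref{rem:brauer}), $\Br(k) = 0$, and the low-degree exact sequence gives $\pic(Y) = \pic(Y_{\overline k})^{\gal(\overline k/k)}$. Combined with $\rk \pic(Y) = 1$ and the permutation action of $\gal(\overline k/k)$ on $\pic(Y_{\overline k}) \cong \ZZ^n$, this forces the action to be transitive on factors, hence $\pic(Y) = \ZZ \cdot \OO_Y(1, \dots, 1)$. In particular every $\OO_Y(d_i(1,\dots,1))$ is $k$-rational, so I can choose $k$-rational defining equations $f_0, f_1, \dots, f_r$ for the branch hypersurface and for $V$, realized as Galois-invariant multi-homogeneous forms in the coordinates $x_{j,l}$ ($1 \leq j \leq n$, $1 \leq l \leq m+1$) of $Y_{\overline k}$, on which $\gal(\overline k/k)$ acts by permuting the index $j$. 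The existence of the cyclic cover of degree $d$ forces $d \mid d_0$, so $L := \OO_V(d_0/d, \dots, d_0/d)$ descends to $k$, and $X$ is realized inside the total space of $L$ as $\{t^d = f_0\}$ with $t$ the tautological fiber coordinate.

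Next, following Step 1 of the proof of Proposition \ref{prop:i_prod_wproj}, for each $l$ I would pick a $k$-basis $z_{1,l}, \dots, z_{n,l}$ of the Galois-invariant part of $\sum_{1 \leq j \leq n} x_{j,l} \overline k$. This linear change of variables converts each $f_i$ into a $k$-rational homogeneous polynomial $\tilde f_i$ of degree $n d_i$ in the weight-$1$ variables $z_{j,l}$. Assigning weight $n d_0 / d$ (an integer since $d \mid d_0$) to $t$, the defining equations of $X$ describe a weighted complete intersection over $k$ in $\PP_k\bigl(\underbrace{1, \dots, 1}_{n(m+1)},\, n d_0/d\bigr)$, cut out by the $r+1$ weighted forms $t^d - \tilde f_0, \tilde f_1, \dots, \tilde f_r$ of weighted degrees $n d_0, n d_1, \dots, n d_r$. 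Multiplying the hypothesis by $n$ gives $\sum_{i=0}^r n d_i \leq n(m+1) - n + n d_0/d < n(m+1) + n d_0/d$, which is precisely the strict degree condition of \cite[Theorem IV.6.7]{MR1440180}. That theorem (applicable since $k$ is $C_1$ with normic forms of every degree) produces a nontrivial common zero $(\tilde z_{j,l}, \tilde t) \in k^{n(m+1)+1}$.

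Finally, converting $(\tilde z_{j,l})$ back to $x$-coordinates gives a Galois-invariant tuple $(\tilde x_{j,l}, \tilde t)$; the vanishing $\tilde f_0(0) = 0$ together with nontriviality forces $(\tilde x_{j,l}) \neq 0$, and the orbit argument from Step 1 of Proposition \ref{prop:i_prod_wproj} guarantees that each $\PP^m$-factor has a nonvanishing coordinate, so the tuple descends to a $k$-rational point of $X$. The main delicate step is ensuring that $t$ may legitimately be introduced as a $k$-rational weighted variable, which rests on $d \mid d_0$ together with the rank-one Picard hypothesis and $\Br(k) = 0$ to descend $L$ to $k$; once this is in place, the remainder is bookkeeping around the weighted Lang theorem.
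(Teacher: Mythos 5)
Your argument is correct and follows essentially the same route as the paper: the paper likewise adjoins the cyclic-cover variable as an extra weighted coordinate, descends the defining data to $k$ via the Galois-orbit averaging of Step 1 of Proposition \ref{prop:i_prod_wproj}, and applies weighted Lang with exactly the same degree count $\sum_{i=0}^r d_i < m+1+d_0/d$. The only cosmetic difference is that the paper introduces $n$ variables $t_1,\dots,t_n$ of weight $d_0/d$ (one per $\PP^m$-factor, with the equation $(t_1\cdots t_n)^d=f_0$) so as to invoke Proposition \ref{prop:i_prod_wproj} as a black box and then pass through the Segre embedding, whereas you use a single $t$ of weight $nd_0/d$ and apply \cite[Theorem IV.6.7]{MR1440180} directly.
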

\begin{proof}
Let $R=\overline k[x_{1,0},\dots,x_{n,m}]$ be the coordinate ring of $Y_{\overline k}$.
By Remark \ref{rem:forms_product_proj_space} we can assume that $\gal(\overline k/k)$ acts by permutations of the first index.
Since $\rk\pic(Y)=1$, the set $\{x_{1,j},\dots,x_{n,j}\}$  is an orbit under the $\gal(\overline k/k)$-action on $R$ for all $j\in\{0,\dots,m\}$.
Let $f_0,\dots,f_r\in R$ be $\gal(\overline k/k)$-invariant homogeneous elements of degrees $d_0(1,\dots,1),\dots,d_r(1,\dots,1)$ such that $f_1,\dots,f_r$ define $V$ and $f_0$ defines the ramification divisor $Z$. 

The Segre embedding
$\alpha:Y_{\overline k}\subseteq\PP^{(m+1)^n-1}_{\overline k}$ is defined over $k$. Let $S=\overline k[z_{\underline j}:\underline j\in\{0,\dots,m\}^n]$ be a coordinate ring of $\PP^{(m+1)^n-1}_{\overline k}$ such that $\alpha$ corresponds to the morphism $\varphi:S\to R$ that sends a coordinate $z_{(j_1,\dots,j_n)}$ to $\prod_{i=1}^nx_{i,j_i}$. Let $I\subseteq S$ be the kernel of $\varphi$.
 Then $S/I$ is isomorphic to a subring of $R$ that contains $f_0,\dots,f_r$. Let $\tilde f_0,\dots,\tilde f_r$ be the corresponding elements in $S/I$. 
Then $X_{\overline k}$ is the subvariety of an $(m+1)^n$-dimensional weighted projective space $\PP_{\overline k}(1,\dots,1,d_0/d)$ defined by the ideal $I+(\tilde f_1,\dots,\tilde f_r,t^d-\tilde f_0)\subseteq S[t]$ where $t$ is a new variable of degree $d_0/d$.

Let $\mathbf{n}=(n)$ and $\mathbf{a}=(\underbrace{1,\dots,1}_{m+1},d_0/d)$.
We consider 
 the $k$-variety $\PP(\mathbf{n},\mathbf{a})$
with coordinates \[((x_{1,0}:\dots:x_{1,m}:t_1),\dots,(x_{n,0}:\dots:x_{n,m}:t_n))\] 
on $\PP(\mathbf{n},\mathbf{a})_{\overline k}$
and a $\gal(\overline k/k)$-action that is compatible with the $\gal(\overline k/k)$-action on $Y_{\overline k}$ under the embedding that identifies $Y_{\overline k}$ with the subvariety of $\PP(\mathbf{n},\mathbf{a})_{\overline k}$ defined by $t_1=\dots=t_n=0$.
The coordinate ring of 
$\PP(\mathbf{n},\mathbf{a})_{\overline k}$
is $R':=R[t_1,\dots,t_n]$, where $t_1,\dots,t_n$ are new variables with $\deg t_i=\frac{d_0}d\deg x_{i,0}$ for all $i\in\{1,\dots,n\}$.
We observe that $f_1,\dots,f_r,(t_1\cdots t_n)^{d}-f_0$ define a $\gal(\overline k/k)$-invariant subvariety of $\PP(\mathbf{n},\mathbf{a})_{\overline k}$, which
has a  $\gal(\overline k/k)$-invariant $\overline k$-point $\tilde x=(\tilde x_{1,0}:\dots:\tilde x_{n,m}:\tilde t_1:\dots:\tilde t_n)$ by Proposition \ref{prop:i_prod_wproj}. 
If $\tilde x_{i,j}\neq0$ for some $(i,j)\in\{1,\dots,n\}\times\{0,\dots,m\}$, then $\tilde x_{1,j},\dots,\tilde x_{n,j}\neq0$ as they form an orbit under the $\gal(\overline k/k)$-action on $\overline k$.
If $\tilde x_{1,0},\dots,\tilde x_{n,m}=0$, then $t_1,\dots,t_n\neq0$ as $\tilde x$ is a point of $\prod_{j=1}^n\PP^{m+1}_{\overline k}$. Let  $\tilde t:=\tilde t_1\cdots\tilde t_n$, and let $\tilde z_{\underline j}$ be the evaluation of $\varphi(z_{\underline j})$ at $(\tilde x_{1,0},\dots,\tilde x_{n,m})$ for all $\underline j\in\{0,\dots,m\}^n$. The $\overline k$-point of $\PP(1,\dots,1,d_0/d)$ with coordinates 
$(\{\tilde z_{\underline j}\}_{\underline j\in \{0,\dots,m\}^n},\tilde t)$ is then a $\gal(\overline k/k)$-point of $X_{\overline k}$.
\end{proof}

\subsection{A symmetric Cremona transformation}
\label{sec:cremona}

In this section we consider the varieties described in \cite[(1.2.3)]{MR3100918} (cf.~Section \ref{sec:rkpic>1}).
Some of the computations have been carried out using the web interface SageMathCell of the  software  \cite{sage}. 
\begin{notation}
We denote the adjugate of an invertible matrix $M$ by $Ad(M):=\det(M) M^{-1}$.
We recall that, given an automorphism $\sigma$ of a field $k$, a morphism $\theta:V\to W$ of $k$-vector spaces  is called $\sigma$-linear if $\theta(av)=\sigma(a)\theta(v)$ for all $a\in k$ and $v\in V$.
We denote by $\sigma:\PP^n(k)\to\PP^n(k)$ the $\sigma$-linear automorphism that sends $(y_0:\dots:y_n)$ to $(\sigma^{-1}(y_0):\dots:\sigma^{-1}(y_n))$.
\end{notation}

Let $k$ be a field of characteristic 0.
Let $S\subseteq\PP^5_{k}$ be the image of the Veronese embedding $v:\PP^2_{k}\to\PP^5_{k}$,
\[
v(x_0:x_1:x_2)=(x_0^2:x_1^2:x_2^2:x_1x_2:x_0x_2:x_0x_1).
\]
Denote by 
$(y_0:\dots:y_5)$ the coordinates on $\PP^5_k$.
Then $S$ is defined by the quadrics 
\begin{gather*}
A_{0}:=y_{1}y_{2}-y_{3}^2,\quad A_{1}:=y_{0}y_{2}-y_{4}^2,\quad A_{2}:=y_{0}y_{1}-y_{5}^2,\\A_{3}:=y_{4}y_{5}-y_{0}y_{3},\quad A_{4}:=y_{3}y_{5}-y_{1}y_{4},\quad A_{5}:=y_{3}y_{4}-y_{2}y_{5},
\end{gather*} 
and $A_0,\dots,A_5$ is a basis of the linear system $\mathscr{S}$ of quadrics in $\PP^5_k$ containing $S$.

Let $S_3$ be the $k$-vector space of  symmetric $3\times 3$ matrices. We consider the isomorphism $\PP^5_k\to\PP(S_3)$ induced by the isomorphism
\begin{gather*}
\alpha:k^6\to S_3,\quad (y_0,\dots,y_5)\mapsto\begin{pmatrix}y_0&y_5&y_4\\y_5&y_1&y_3\\y_4&y_3&y_2\end{pmatrix}.
\end{gather*}
Every automorphism of $\PP^5_k$ that preserves $S$ belongs to the image of the embedding $\PGL(3,k)\subseteq \PGL(6,k)$ induced by the group homomorphism $\varphi:\GL(3,k)\subseteq \GL(6,k)$ that sends a matrix $M\in \GL(3,k)$ to the element of $\GL(6,k)$ defined by $\varphi(M)(y)=\alpha^{-1}(M\alpha(y)M^{t})$ for all $y\in k^6$. 

Let $A:\PP^5_{k}\dashrightarrow\PP^5_{k}$ be the rational map defined by $(A_0,\dots,A_5)$. It is a symmetric Cremona transformation.
For every matrix $M\in \GL(3,k)$ the birational map $\varphi(M)\circ A$ is a Cremona transformation of $\PP^5_k$ with inverse $\varphi(M^t)\circ A$.

Let $f:Y\to\PP^5_k$ be the blow up with center $S$. Then $f$ resolves any Cremona transformation $\PP^5_k\dashrightarrow\PP^5_k$ induced by the linear system $\mathscr{S}$  
(see \cite{MR1020829} for example). 
 Let $H$ be the pullback of a hyperplane in $Y$ and $E$ the exceptional divisor of $f$. Then $|2H-E|$ is the linear system of strict transforms of elements of $\mathscr{S}$ under $f$. 

\begin{lemma}\label{lem:galoisaction_cremona}
Let $L$ be a hyperplane in $\PP^5_k$ such that $L\cap S$ spans $L$. Let $Q$ be a quadric in $\mathscr{S}$.
Let $X\subseteq Y$ be the strict transform of $L\cap Q$.
Let $\sigma$ be an automorphism of $k$.
Then every $\sigma$-linear automorphism $\theta$ of $X$ that induces a $\sigma$-linear isomorphism between the complete linear systems of $H|_X$ and $(2H-E)|_X$ is the restriction to $X$ of a $\sigma$-linear automorphism $\theta_Y$ of $Y$ that resolves a $\sigma$-linear Cremona transformation on $\PP^5_k$ of the form $\sigma\circ\varphi(M)\circ A$ for some $M\in \GL(3,k)$. Moreover, if $\theta$ is an involution, so is $\theta_Y$.
\end{lemma}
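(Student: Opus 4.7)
My plan is to lift the $\sigma$-linear automorphism $\theta$ of $X$ to a $\sigma$-linear automorphism $\theta_Y$ of $Y$ by extending the induced isomorphism of complete linear systems on $X$ to an isomorphism of the ambient $6$-dimensional spaces $H^0(Y,\OO(H))$ and $H^0(Y,\OO(2H-E))=\mathscr{S}$, then reading off a Cremona transformation of $\PP^5_k$ and verifying it has the required form.

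First I compute the restriction maps $r_1\colon H^0(Y,\OO(H))\to H^0(X,\OO(H|_X))$ and $r_2\colon H^0(Y,\OO(2H-E))\to H^0(X,\OO((2H-E)|_X))$. The hypothesis that $L\cap S$ spans $L$ implies that also $L\cap Q$ spans $L$, and this pins down $\ker r_1$ as the one-dimensional subspace generated by a linear form $\ell_L$ defining $L$; a parallel computation identifies $\ker r_2$ as the one-dimensional subspace generated by $Q$. Because both kernels are one-dimensional and both targets are $5$-dimensional, the $\sigma$-linear isomorphism $\theta^{\ast}\colon H^0(X,\OO(H|_X))\to H^0(X,\OO((2H-E)|_X))$ supplied by the hypothesis on complete linear systems admits a $\sigma$-linear lift $\Theta\colon H^0(Y,\OO(H))\to H^0(Y,\OO(2H-E))$ sending $\ell_L$ to a nonzero scalar multiple of $Q$, and any such $\Theta$ is automatically an isomorphism.

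Expressing $\Theta$ in the bases $(y_0,\ldots,y_5)$ and $(A_0,\ldots,A_5)$ encodes a matrix in $\GL(6,k)$ and defines a $\sigma$-linear rational self-map $B=\sigma\circ\psi\circ A\colon\PP^5_k\dashrightarrow\PP^5_k$ with $\psi\in\PGL(6,k)$ uniquely determined. The indeterminacy locus of $B$ is $S$, so the blowup $f$ resolves $B$ and $B\circ f\colon Y\to\PP^5_k$ is a morphism. To factor $B\circ f$ through $f$ and obtain a biregular $\sigma$-linear automorphism $\theta_Y\colon Y\to Y$ with $f\circ\theta_Y=B\circ f$, I need the inverse rational map $B^{-1}=A\circ\psi^{-1}\circ\sigma^{-1}$ to be resolved by $f$ as well, which amounts to $\psi^{-1}(\mathscr{S})=\mathscr{S}$, i.e.\ $\psi(S)=S$. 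This is the main obstacle, since the lift $\Theta$ a priori only uses $\theta$ as a birational self-map. I plan to overcome it by applying the same lifting construction to $\theta^{-1}\colon X\to X$ (itself a $\sigma^{-1}$-linear automorphism swapping the two complete linear systems): this yields a $\sigma^{-1}$-linear isomorphism $\Theta'$ and an associated rational map $B'$ that must agree with the inverse of $B$ on a dense open, and the equality of their indeterminacy loci forces $\psi(S)=S$, so $\psi=\varphi(M)$ for some $M\in\GL(3,k)$ by the description of linear automorphisms of $\PP^5_k$ preserving $S$.

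With $\psi=\varphi(M)$, the induced $\sigma$-linear Cremona transformation is $\sigma\circ\varphi(M)\circ A$, and $\theta_Y$ is its biregular resolution through $f$. By construction $\theta_Y^{\ast}$ restricts on $X$ to $\theta^{\ast}$, so $\theta_Y|_X=\theta$. For the involution clause, if $\theta^2=\mathrm{id}_X$ then $\sigma^2=\mathrm{id}$, and $\theta_Y^2$ is a $k$-linear automorphism of $Y$ restricting to the identity on $X$; by the uniqueness of the lift of the identity (the scalar relating $\ell_L$ and $Q$ is pinned down by compatibility with $\theta^{-1}=\theta$), this forces $\theta_Y^2=\mathrm{id}_Y$. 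Equivalently, the relation $A\circ\varphi(N)=\varphi(N^{-T})\circ A$ (up to scalar), combined with the commutation $\sigma\circ\varphi(N)=\varphi(\sigma^{-1}(N))\circ\sigma$, reduces $(\sigma\circ\varphi(M)\circ A)^2=\mathrm{id}$ to a concrete symmetry relation on $M$ that already follows from $\theta^2=\mathrm{id}_X$.
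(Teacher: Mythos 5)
Your overall architecture (lift the isomorphism of complete linear systems on $X$ to an isomorphism $H^0(Y,\OO(H))\to\mathscr{S}$, read off a matrix, and compare with $\varphi(M)\circ A$) is the same as the paper's, but the step where you force $\psi(S)=S$ has a genuine gap, and it is exactly the point where the paper does something different. First, the lift $\Theta$ is only determined modulo linear maps $H^0(Y,\OO(H))\to\ker r_2=\langle Q\rangle$; two lifts produce Cremona transformations $\sigma\circ\psi\circ A$ and $\sigma\circ\psi'\circ A$ that agree only on the quadric $\{Q=0\}$, so ``the'' $\psi$ is not well defined off $Q$ and there is no reason any particular lift should preserve $S$ globally. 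Second, your mechanism for proving $\psi(S)=S$ does not work: the map $B'$ built from $\theta^{-1}$ agrees with $B^{-1}$ only on (a dense open of) the threefold $Q\cap L$, not on a dense open of $\PP^5$, so $B'$ and $B^{-1}$ need not coincide as rational maps and you cannot compare their indeterminacy loci. All the data $\theta$ gives you lives on $Q\cap L$.

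The paper extracts precisely the information that survives restriction to $Q\cap L$: the inverse birational self-map of $Q\cap L$ induced by $\theta^{-1}$ has exceptional locus $S\cap L$, and computing that locus for $(A\circ R^{-1}\circ\sigma^{-1})|_{Q\cap L}$ yields only $R(S\cap L)=S\cap\sigma^{-1}(L)$ --- an isomorphism of conics, not a global statement about $S$. This conic isomorphism extends via $v$ to some $M\in\GL(3,k)$, and then $\varphi(M^{-1})$ and $R^{-1}$ agree on the hyperplane $\sigma^{-1}(L)$ because $S\cap\sigma^{-1}(L)$ spans it; that is enough, since only the restriction of the Cremona transformation to $Q\cap L$ must match $\theta$. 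You would need to replace your ``equality of indeterminacy loci'' step by this local-to-$L$ argument. The involution clause has the same defect in your write-up: an automorphism of the fivefold $Y$ restricting to the identity on the threefold $X$ need not be the identity; the paper instead shows $\varphi(\sigma(M)\cdot Ad(M)^t)$ is the identity on $L$, descends to an automorphism of $\PP^2_k$ fixing the spanning conic $v^{-1}(L\cap S)$ pointwise, and concludes from there.
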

\begin{proof}
We fix a basis $z_0,\dots,z_5$ of the linear system $|H|$ such that $z_5$ defines the hyperplane $L$. Let $C\in \GL(6,k)$ such that the basis $y_0,\dots,y_5$ of $|H|$
corresponding to the choice of coordinates  on $\PP^5_k$ can be written as $(y_0,\dots,y_5)=C(z_0,\dots,z_5)$.
 Let $B_0,\dots,B_4, \tilde B_0, \dots,\tilde B_4\in \mathscr{S}$ such that $B_0|_{Q},\dots,B_4|_{Q}$ and $\tilde B_0|_{Q},\dots,\tilde B_4|_{Q}$ are the bases of $\mathscr{S}|_{Q}$ image of the basis $z_0|_L,\dots,z_4|_L$ of $|H||_L$ under the isomorphisms induced by $\theta$ and $\theta^{-1}$, respectively.
Let $R, \tilde R\in \GL(6,k)$ such that 
\[\sigma(C)(B_0,\dots,B_4, Q)=R (A_0,\dots,A_5), \quad \sigma(C)(\tilde B_0,\dots,\tilde B_4, Q)=\tilde R (A_0,\dots,A_5)\]
as bases of $\mathscr{S}$. 
Then $\theta$ induces via $f$ a birational map $(\sigma\circ R\circ A)|_{Q\cap L}:Q\cap L\to Q\cap L$ with inverse $(\sigma\circ\tilde R\circ A)|_{Q\cap L}$. Since $\sigma(S)=S$ by construction and $(\sigma\circ\tilde R\circ A)|_{Q\cap L}=(A\circ R^{-1}\circ\sigma^{-1})|_{Q\cap L}$ is a birational map with exceptional locus $S\cap L=\sigma(R(S\cap L))$, we have $R(S\cap L)=S\cap\sigma^{-1}(L)$; that is, $R$ induces an isomorphism of conics between $v^{-1}(S\cap L)$ and $ v^{-1}(S\cap\sigma^{-1}(L))$. Hence there is a matrix $M\in \GL(3,k)$ such that $(v^{-1}\circ R\circ v)|_{v^{-1}(S\cap L)}$ is the restriction of the automorphism of $\PP^2_k$ defined by $M$.
 Then $\varphi(M^{-1})|_{\sigma^{-1}(L)}=R^{-1}|_{\sigma^{-1}(L)}$ because they agree  on $S\cap \sigma^{-1}(L)$ and $S\cap\sigma^{-1}(L)$ spans $\sigma^{-1}(L)$.
So $(A\circ R^{-1}\circ\sigma^{-1})|_{Q\cap L}=(A\circ\varphi(M^{-1})\circ\sigma^{-1})|_{Q\cap L}$, and the $\sigma$-linear Cremona transformations on $\PP^5_k$ induced by $R$ and $\varphi(M)$ have the same restriction to $Q\cap L$.

If $\theta\circ\theta$ is the identity on $X$, then the automorphism
\[
\varphi(\sigma(M)\cdot Ad(M)^t)=\varphi(\sigma(M))\circ A\circ\varphi(M)\circ A
\] of $\PP^5_k$ restricts to the identity on an open subset of $Q\cap L$, and hence on $L$.
 Then $\sigma(M)\cdot Ad(M)^t$ is an automorphism of $\PP^2_k$ that restricts to the identity on $v^{-1}(L\cap S)$. Since the conic $v^{-1}(L\cap S)$ spans $\PP^2_k$, we conclude that $\sigma(M)\cdot Ad(M)^t$ is the identity on $\PP^2_k$, and so $\sigma\circ\varphi(M)\circ A$ is a symmetric $\sigma$-linear Cremona transformation.
\end{proof}

Let \[\beta:k^6\to k^6, \quad (a_0,\dots,a_5)\mapsto (a_0,a_1,a_2,2a_3,2a_4,2a_5).\]
Let $\psi:\GL(3,k)\to \GL(6,k)$ be the group homomorphism defined by $\psi(M)=\beta\circ\varphi(Ad(M))\circ \beta^{-1}$. Let $M\in \GL(3,k)$ and $a=(a_0,\dots,a_5)\in k^6$, then \begin{equation}\label{eq:sumAvarphiM}
\sum_{i=0}^5a_iA_i(\varphi(M)(y))=\sum_{i=0}^5\psi_i(M)(a)A_i(y)\end{equation}
 for all  $y\in k^6$, where $(\psi_0(M)(a),\dots,\psi_5(M)(a))=\psi(M)(a)=\varphi(Ad(M)^t)^t(a)$.
 In particular, 
\begin{equation}\label{eq:AvarphiMA}
A\circ \varphi(M)\circ A=\varphi(Ad(M)^t)
\end{equation} as birational maps on $\PP^5_k$.

The secant variety $V$ of $S$  in $\PP^5_k$ is the cubic hypersurface defined by 
\[
y_0y_1y_2+2y_3y_4y_5-y_0y_3^2-y_1y_4^2-y_2y_5^2=0.
\]
The action of $\PGL(3,k)$ on $\PP^5_k$ induced by $\varphi$ has three orbits: $S$, $V\smallsetminus S$
and $\PP^5_k\smallsetminus V$, 
see \cite[\S 3.1]{MR3795470} for example.
 Hence, the action of $\PGL(3,k)$ on $\PP^5_k$ induced by $\psi$ has three orbits: $\widetilde \beta(S)$, $\widetilde\beta(V)$ and $\PP^5_k\smallsetminus\widetilde\beta(V)$, where $\widetilde\beta$ is the automorphism of $\PP^5_k$ induced by $\beta$.
Now, $\widetilde\beta(S)$ is defined by $A_0\circ\beta^{-1},\dots,A_5\circ\beta^{-1}$,
and $\widetilde\beta(V)$ is defined by $p=0$, where
\[p=4y_0y_1y_2+y_3y_4y_5-y_0y_3^2-y_1y_4^2-y_2y_5^2.
\]

\begin{remark}\label{rem:smooth_quadric_cremona}
If $a=(a_0,\dots,a_5)\in k^6$ and  $q_a:=\sum_{i=0}^5a_iA_i$, the matrix $J_a$ that defines the linear system 
\begin{equation}\label{eq:cremonajacobian}
\frac{\partial q_a}{\partial y_0}=\dots=\frac{\partial q_a}{\partial y_5}=0,\end{equation}
 satisfies $\det J_a=-p(a)^2$ 
 and every $5\times 5$ minor of $J_a$ is divisible by $p(a)$. So, if nonempty, the linear subvariety of $\PP^5_{k}$ defined by \eqref{eq:cremonajacobian} has dimension $\geq1$, and hence it intersects the quadric $q_a=0$. In particular, $q_a$ is smooth if and only if $p(a)\neq0$.
\end{remark}

\subsubsection{Smooth case}
\begin{lemma}\label{lem:2_3_polynomialintersection}
Let $k$ be an algebraically closed field. For $a_0,\dots,a_3,b_0,b_1,b_2\in k$, let
$
g_3:=a_3x^3+a_2x^2+a_1x+a_0$, $g_2:=b_2x^2+b_1x+b_0 \in k[x],
$ 
and
\[
\delta(g_3,g_2):=b_2\delta_1^2-b_1\delta_1\delta_2+b_0\delta_2^2,
\]
where $\delta_{1}:=a_3b_0b_1-a_2b_0b_2+a_0b_2^2$ and 
$\delta_{2}:=a_3b_1^2-a_3b_0b_2-a_2b_1b_2+a_1b_2^2$.
If  $\delta(g_3,g_2)\neq0$, then $b_2\neq0$, and $g_3$ and $g_2$ have no common zero in $k$.
\end{lemma}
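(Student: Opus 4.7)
The plan is to reduce the statement to a resultant-type calculation via Euclidean division. First I would check the boundary case: if $b_2=0$ then a direct substitution gives $\delta_1=a_3b_0b_1$ and $\delta_2=a_3b_1^2$, and one computes $\delta(g_3,g_2)=-b_1\cdot a_3b_0b_1\cdot a_3b_1^2+b_0\cdot a_3^2b_1^4=0$. Hence $\delta(g_3,g_2)\neq 0$ forces $b_2\neq 0$.

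Assuming $b_2\neq 0$, I would perform the Euclidean division of $g_3$ by $g_2$ in $k[x]$, carried out with denominators cleared by $b_2^2$. A straightforward computation yields
\[
b_2^2\,g_3(x)=g_2(x)\bigl(a_3b_2\,x+(a_2b_2-a_3b_1)\bigr)+\bigl(\delta_2\,x+\delta_1\bigr),
\]
so $g_3$ and $g_2$ share a root in $k$ if and only if $g_2$ and the linear remainder $\delta_2 x+\delta_1$ do.

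Now I would read off $\delta(g_3,g_2)$ as essentially evaluating $g_2$ at the root of the remainder. If $\delta_2\neq 0$ this is
\[
\delta_2^2\,g_2(-\delta_1/\delta_2)=b_2\delta_1^2-b_1\delta_1\delta_2+b_0\delta_2^2=\delta(g_3,g_2),
\]
so $\delta(g_3,g_2)\neq 0$ means $-\delta_1/\delta_2$ is not a root of $g_2$, giving no common root. If $\delta_2=0$, then $\delta(g_3,g_2)=b_2\delta_1^2$ and the remainder becomes the constant $\delta_1$; since $\delta(g_3,g_2)\neq 0$ forces $\delta_1\neq 0$, the remainder has no zero at all.

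There is no real obstacle here: the only mildly tricky step is verifying the explicit form of the Euclidean division so that the remainder coefficients coincide exactly with $\delta_2$ and $\delta_1$ as defined; everything else is a one-line substitution. I would therefore organise the write-up as (i) the $b_2=0$ check, (ii) the identity for $b_2^2g_3$ obtained by division, and (iii) the case split on $\delta_2$ combined with the evaluation formula $\delta_2^2g_2(-\delta_1/\delta_2)=\delta(g_3,g_2)$.
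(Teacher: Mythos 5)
Your proof is correct and is essentially the paper's argument: the paper also reduces $g_3$ modulo $g_2$ to the linear remainder $\delta_2 x+\delta_1$ (by substituting $c^2=-\tfrac{b_1}{b_2}c-\tfrac{b_0}{b_2}$ twice at a hypothetical common zero $c$) and then evaluates $g_2$ at $-\delta_1/\delta_2$, splitting on whether $\delta_2=0$. The only differences are cosmetic: you state the Euclidean division as a global polynomial identity and argue directly, whereas the paper works pointwise at $c$ and argues by contraposition.
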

\begin{proof}
If $b_2=0$, then $\delta(g_3,g_2)=0$. Assume now that $b_2\neq0$ and that $g_3$ and $g_2$ have a common zero $c\in k$.
Then $c^2=-\frac{b_1}{b_2} c-\frac{b_0}{b_2}$ and substituting it twice in $g_3$ gives
$\delta_2 c+\delta_1=0$. If $\delta_2=0$, then $\delta_1=0$. If $\delta_2\neq0$, we substitute $c=-\delta_1/\delta_2$ in $g_2$ and obtain $\delta(g_3,g_2)=0$.
\end{proof}

\begin{lemma}\label{lem:quadricintersection_cremona}
Let $k$ be an algebraically closed field of characteristic $0$. 
Let $Q$ be a smooth quadric in the linear system $\mathscr{S}$. Let $H$ be a hyperplane in $\PP^5_k$. Then for a general quadric $Q'$ in $\mathscr{S}$, the singular locus $\sing(Q\cap Q')$ is contained in the union of three lines of $\PP^5_k$, $\sing(Q\cap Q')\cap S$ is nonempty of dimension $0$, and $\sing(Q\cap Q')\cap S\cap H=\emptyset$.
\end{lemma}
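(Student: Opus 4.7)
The plan is a classical pencil-of-quadrics analysis combined with the special structure of $\mathscr{S}$ as the linear system of quadrics through the Veronese surface. First I would use the Jacobian criterion: a point $p \in Q \cap Q'$ lies in $\sing(Q \cap Q')$ precisely when $\nabla q|_p$ and $\nabla q'|_p$ are linearly dependent, equivalently when $\nabla(\lambda q + \mu q')|_p = 0$ for some $(\lambda:\mu) \in \PP^1$; by Euler's identity this also forces $p$ to lie on that pencil member, so $\sing(Q \cap Q') \subseteq \bigcup_{(\lambda:\mu) \in \PP^1} \sing(\lambda Q + \mu Q')$. By Remark \ref{rem:smooth_quadric_cremona}, the singular members of $\mathscr{S}$ form the cubic $\{p=0\}$ in $\PP(\mathscr{S}) \cong \PP^5_k$ (after the $\beta$-change of coordinates), and at a smooth point of this cubic the corresponding quadric has rank exactly $4$: all $5 \times 5$ minors of $J_a$ are divisible by $p(a)$ and hence vanish where $p=0$, while the three $\psi$-orbits on $\PP^5_k$ force the rank to be constant on the smooth locus. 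The pencil spanned by $Q$ and $Q'$ is a line in $\PP(\mathscr{S})$ meeting the cubic transversely in three smooth points for a general $Q'$, producing three rank-$4$ singular quadrics $q_{c_1}, q_{c_2}, q_{c_3}$ with respective singular lines $L_1, L_2, L_3$. This establishes the containment $\sing(Q \cap Q') \subseteq L_1 \cup L_2 \cup L_3$.

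The next step is to identify each $L_i$ as a chord of $S$. Via $\alpha$ and $\beta$ the smooth locus of the cubic $\{p=0\}$ is a single $\PGL(3,k)$-orbit (the rank-$2$ symmetric $3 \times 3$ matrices modulo scalars), and the variety of chords of $S$ is also a single $\PGL(3,k)$-orbit in $\Gr(2, 6)$; the assignment $c \mapsto L_c$ is $\PGL(3,k)$-equivariant with respect to the $\psi$ and $\varphi$ actions, so it is enough to check that the image lies in the chord variety at one orbit representative. Take $c = (1, 0, 0, 0, 1, 0)$: then $q_c = A_0 + A_4$ and a direct computation shows its singular locus is the line $\{(s : 0 : t : 0 : t : 0) : (s:t) \in \PP^1\}$, meeting $S$ exactly at the two points $v(1:0:0)$ and $v(1:0:1)$. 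Since the Veronese contains no lines, each $L_i$ meets $S$ in exactly two points; these two points lie in $S \subseteq Q \cap Q'$ and on the singular line of $q_{c_i}$, hence in $\sing(Q \cap Q') \cap S$. So $\sing(Q \cap Q') \cap S$ is nonempty, and since it is contained in the finite set $\bigcup_i (L_i \cap S)$ of at most six points, it is $0$-dimensional.

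For the final claim, I would argue that these finitely many points move generically in $S$ as $Q'$ varies. Given any distinct $p_1, p_2 \in S$, the chord $\overline{p_1 p_2}$ equals $L_c$ for some rank-$2$ quadric $q_c \in \mathscr{S}$ by orbit transitivity, and any $Q'$ in the pencil through $Q$ and $q_c$ with $Q' \neq Q$ realises $\{p_1, p_2\} \subseteq \sing(Q \cap Q') \cap S$. Hence the total family of configurations $\sing(Q \cap Q') \cap S$ dominates $S$ as $Q'$ ranges over $\mathscr{S}$, and for a general $Q'$ the finitely many points of $\sing(Q \cap Q') \cap S$ avoid the codimension-$1$ subscheme $S \cap H$. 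The main obstacle is Step 2: identifying the singular line of a rank-$4$ quadric in $\mathscr{S}$ with a chord of $S$. Using $\PGL(3,k)$-equivariance to reduce to a single orbit representative, rather than attempting a brute gradient computation with $A_0, \ldots, A_5$ in full generality, is what makes the argument transparent.
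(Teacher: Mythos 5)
Your treatment of the first two assertions is correct and takes a genuinely different, more conceptual route than the paper. The paper fixes $q=A_0+A_3$, writes down the matrix $J(t,u)$ of the pencil, and shows by an explicit $4\times4$ minor computation that for general $b$ each nonempty $L_{c,b}$ is a line, then computes $L_{c,b}\cap S$ as an explicit binary quadratic form; you instead use the rank stratification of $\mathscr{S}$ into $\PGL(3,k)$-orbits, constancy of rank on orbits, and a single orbit representative $c=(1,0,0,0,1,0)$ (whose vertex line and its two intersection points with $S$ you compute correctly). Both routes legitimately yield $\sing(Q\cap Q')\subseteq L_1\cup L_2\cup L_3$ and that $\sing(Q\cap Q')\cap S$ is nonempty and finite.

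The gap is in the third assertion. From ``the family of configurations $\sing(Q\cap Q')\cap S$ dominates $S$'' you conclude that for general $Q'$ these finitely many points avoid the curve $S\cap H$. That inference is not valid: the incidence variety $I=\{(Q',p): p\in\sing(Q\cap Q')\cap S\}\subseteq\PP(\mathscr{S})\times S$ could a priori have a component that dominates $\PP(\mathscr{S})$ (with finite fibers) yet maps into a fixed curve of $S$ contained in $S\cap H$ --- dominance of the \emph{total} family over $S$ says nothing about each of the six branches individually. To close this you need either (a) irreducibility of $I$, or (b) a fiber-dimension count for $I\to S$: for a fixed $p\in S$ the quadrics in $\mathscr{S}$ singular at $p$ form a linear $\PP^2\subseteq\PP(\mathscr{S})$ (three independent linear conditions on the coefficients, as one checks at $p=v(1{:}0{:}0)$ and transports by $\PGL(3,k)$), so the set of $Q'$ with $p\in\sing(Q\cap Q')$ is the join of this $\PP^2$ with the point $[Q]$, a $\PP^3$; hence the preimage of the curve $S\cap H$ in $I$ has dimension at most $4<5=\dim\PP(\mathscr{S})$ and cannot dominate. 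Either fix is short, but as written the step is a non sequitur. The paper avoids the issue entirely by exhibiting explicit polynomials $H_3$, $H_5$, $C(t,u)$ and verifying via the elimination Lemma \ref{lem:2_3_polynomialintersection} that the relevant resultant-type expression is a nonzero polynomial in $u$, which is a genuine certificate of genericity.
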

\begin{proof}
By Remark \ref{rem:smooth_quadric_cremona}, the open orbit under the action of $\PGL(3,k)$ on $\PP^5(k)$ via $\psi$ is the locus of $(a_0:\dots:a_5)\in\PP^5(k)$ such that  the quadric  $\sum_{i=0}^5a_iA_i=0$ is smooth.
Hence, by \eqref{eq:sumAvarphiM}, up to an automorphism of $\PP^5_k$ that fixes $S$, we can assume that $Q$ is defined by the quadratic form $q:=A_0+A_3$.
Let $a:=(1,0,0,1,0,0)$. 
Let $q'(u):=\sum_{i=0}^5u_iA_i\in k[u_0,\dots,u_5, y_0,\dots,y_5]$, where $u:=(u_0,\dots,u_5)$.
For every $b\in k^6$, let $Q'_b\subseteq\PP^5_{k}$ be the quadric defined by the quadratic form $q'_b:=q'(b_0,\dots,b_5)\in k[y_0,\dots,y_5]$.
 
Let $L\subseteq\GG_{m,k}\times\PP^5_k\times\PP^5_k$ be the subvariety defined by  
 \begin{equation}\label{eq:lem:quadricintersection_cremona_partial}
 \frac{\partial (q-tq'(u))}{\partial y_0}=\dots=\frac{\partial (q-tq'(u))}{\partial y_5}=0,
 \end{equation}
 where $(t,(u_0:\dots:u_5),(y_0:\dots:y_5))$ denotes the coordinates on $\GG_{m,k}\times\PP^5_k\times\PP^5_k$.
 Let $J(t,u)\in Mat_{6\times 6}(k[t,u_0,\dots,u_5])$ be the matrix that defines  \eqref{eq:lem:quadricintersection_cremona_partial} as a linear system in the variables $y_0,\dots,y_5$ with coefficients in $k[t,u_0,\dots,u_5]$.
 Then $\det(J(t,u))=-p(a-tu)^2$ and every $5\times 5$ minor of $J_{t,u}$ is divisible by $p(a-tu)$.
 
  For every $b\in k^6$,
 the singular locus of $Q\cap Q'_b$ is contained in the subvariety of $\PP^5_k$ where the Jacobian matrix of $(q,q'_b)$ has rank 1; that is, 
 the union of the linear subspaces $L_{c,b}$, for  $c\in k^\times$, where $L_{c,b}\subseteq\PP^5_k$ is the subvariety defined by the linear system $J(c,b)(y_0,\dots,y_5)=0$.
We observe that $L_{c,b}\neq\emptyset$ if and only if $p(a-cb)=0$ and in that case $\dim L_{c,b}\geq1$.
Moreover, $\dim L_{c,b}\geq2$ if and only if all the $4\times4$ minors of $J(c,b)$ vanish. 

The $4\times4$ minor $m(t,u)$ of $J(t,u)$ obtained by deleting the 4th and 6th rows and the  4th and 6th columns is
$-4u_1^2t^3(A_1(\beta^{-1}(u))t-u_2)$.
 If $b\in k^6$ satisfies
 \begin{equation}\label{eq:B1(w)not0}
 b_1A_1(\beta^{-1}(b))p(A_1(\beta^{-1}(b))a+b_2b)\neq0,\end{equation}
then $m(t,b)$ and $p(a-tb)$ have no common zeros in $k^\times$.
Hence, for general $b\in k^6$, the singular locus of $Q\cap Q'_b$ is contained in a union of three lines of $\PP^5_k$, as $p(a-tb)\in k[t]$ has degree $\leq3$.

The variety $L$ is defined by  equations 
\begin{equation}\label{eq:equationsL(t,b)}
y_i=l_{t,u,i}(y_3,y_5), \qquad i\in\{0,1,2,4\},\end{equation} 
where $l_{t,u,i}$ are linear forms in $k(t,u_0,\dots,u_5)[y_3,y_5]$ for $i\in\{0,1,2,4\}$. 
If $b\in k^6$ satisfies $b_2p(b_0a-b)\neq0$ and \eqref{eq:B1(w)not0}, then for every $c\in k^\times$ such that $L_{c,b}\neq\emptyset$, the line  $L_{c,b}$ is defined by the linear equations $y_i=l_{c,b,i}(y_3,y_5)$, for $i\in\{0,1,2,4\}$, obtained by evaluating \eqref{eq:equationsL(t,b)} in $t=c$ and $u=b$. 
 By substituting these equations  into the equations that define $S$, 
we see that $L_{c,b}\cap S$ is the subvariety of $L_{c,b}$ defined by the quadratic equation
\begin{equation}\label{eq:lem:quadricintersection_cremona_S}
(1-cb_0)y_3^2+cb_4y_3y_5-cb_2y_5^2=0.
\end{equation} 
In particular, it is nonempty and of dimension 0.

Let $h_0,\dots,h_5\in k$ such that $h:=\sum_{i=0}^5h_iy_i$ is a linear form that defines $H$.
Let $H_3(t,u), H_5(t,u)\in k[t,u_0,\dots,u_5]$, such that 
 $H_3(t,u)y_3+H_5(t,u)y_5\in k(t,u_0,\dots,u_5)[y_3,y_5]$ is the  polynomial obtained by substituting \eqref{eq:equationsL(t,b)} into $m(t,u)h$.
If $b\in k^6$ satisfies \eqref{eq:B1(w)not0}, then  $m(c,b)\neq0$ for all $c\in k^\times$ such that $L_{c,b}\neq\emptyset$, so
the intersection $L_{c,b}\cap H$ is the subvariety of $L_{c,b}$ defined by the equation $H_3(c,b)y_3+H_5(c,b)y_5=0$.

We use Lemma \ref{lem:2_3_polynomialintersection} to show that for general $b\in k^6$ the polynomials $H_3(t,b)$, $H_5(t,b)$ and $p(a-tb)$ have no common solutions, that is, there is no $c\in k^\times$ such that $L_{c,b}\neq\emptyset$ and $L_{c,b}\subseteq H$. Indeed, we observe that $H_5(t,u)/t\in k[t,u_0,\dots,u_5]$ is a polynomial of degree 2 in $t$, and that $\delta(p(a-tu),H_5(t,u)/t)$ is a nonzero element in $k[u_0,\dots,u_5]$ if $h_0\neq0$ or $h_1\neq0$ or $h_2\neq0$ or $h_4\neq0$ or $h_5\neq0$.
 If $h_0=h_1=h_2=h_4=h_5=0$, we observe that $H_3(t,u)/t\in k[t,u_0,\dots,u_5]$ is a polynomial of degree 2 in $t$ and $\delta(p(a-tu),H_3(t,u)/t)$ is a nonzero element in $k[u_0,\dots,u_5]$,  as $h_3\neq0$.

Then, for general $b\in k^6$ and for every $c\in k^\times$ such that $L_{c,b}\neq\emptyset$,  the intersection $L_{c,b}\cap H$ consists of a point $z$, and
 $L_{c,b}\cap H\cap S$  is defined by evaluating \eqref{eq:lem:quadricintersection_cremona_S} in $z$. 
Computations show that since $p(a-cb)=0$, if $m(c,b)\neq0$ then $L_{c,b}\cap H\cap S$ is nonempty if and only if $C(c,b)=0$, where
\begin{multline}
C(t,u):=w_0^2h_0^2+w_1^2h_1^2+w_2^2h_2^2+w_1w_2h_3^2+w_0w_2h_4^2+w_0w_1h_5^2
+(w_5^2-2w_0w_1)h_0h_1\\+(w_4^2-2w_0w_2)h_0h_2
-(w_0w_3-w_4w_5)h_0h_3+w_0w_4h_0h_4+w_0w_5h_0h_5\\
+(w_3^2-2w_1w_2)h_1h_2+w_1w_3h_1h_3
-(w_1w_4-w_3w_5)h_1h_4+w_1w_5h_1h_5
+w_2w_3h_2h_3\\+w_2w_4h_2h_4-(w_2w_5-w_3w_4)h_2h_5
+w_2w_5h_3h_4+w_1w_4h_3h_5+w_0w_3h_4h_5
\end{multline}
with $w_i:=a_i-tu_i\in k[t,u_0,\dots,u_5]$ for all $i\in\{0,\dots,5\}$.
Since $\delta(p(a-tu),C(t,u))$ is a nonzero element in $k[u_0,\dots,u_5]$, we conclude by Lemma \ref{lem:2_3_polynomialintersection} that $L_{c,b}\cap H\cap S=\emptyset$ for general $b\in k^6$ and for every $c\in k^\times$ such that $L_{c,b}\neq\emptyset$.
\end{proof}

\begin{prop}\label{prop:(1.2.3)}
Let $X$ be a variety \cite[(1.2.3)]{MR3100918} defined over a field $k$ of characteristic $0$ and cohomological dimension $\leq 1$. Then $X(k)\neq\emptyset$.
\end{prop}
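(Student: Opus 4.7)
The plan is to realize $X$, following \cite[(1.2.3)]{MR3100918}, as a variety $k$-birational via the blow-down $f\colon Y \to \PP^5_k$ to an intersection $V \subseteq \PP^5_k$ of hypersurfaces of low degree---namely a hyperplane $L$ and/or quadrics in $\mathscr{S}$---with $V$ defined over $k$ thanks to the Galois-equivariance provided by Lemma \ref{lem:galoisaction_cremona}. The morphism $f|_X \colon X \to V$ is a $k$-birational projective morphism that is an isomorphism over the complement $V \setminus S$ of the Veronese surface.

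First I would produce a $k$-rational point on $V$. Since $V$ is cut out in $\PP^5_k$ by forms whose degrees sum to at most $4$, which is less than $6 = \dim \PP^5_k + 1$, the $C_1$ hypothesis on $k$---in the strong form of \cite[Theorem IV.6.7]{MR1440180}---yields $V(k)\neq\emptyset$. Any $k$-point of $V \setminus S$ then lifts uniquely through $f|_X$ to a $k$-point of $X$, so it remains only to deal with $k$-points of $V$ that are forced to lie on the contracted locus $V \cap S$.

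Second, if the $k$-point produced in the first step happens to lie on $V \cap S$, I would exploit rationality: in the smooth case considered in this subsection, $V$ is a Fano threefold (a smooth quadric threefold in $\PP^4_k$ or a smooth complete intersection of two quadrics in $\PP^5_k$) and already carries a smooth $k$-point by step one. Such a threefold with a smooth $k$-point is $k$-rational---for a quadric by stereographic projection, and for a $V_4$ by the classical construction via lines through a general point. Consequently $V(k)$ is Zariski dense and necessarily meets the complement of the proper subscheme $V \cap S$; the resulting $k$-point lifts through $f|_X$ to the desired $k$-point of $X$.

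The main obstacle will be the possibly singular case, which I would handle by invoking Lemma \ref{lem:quadricintersection_cremona} together with Remark \ref{rem:smooth_quadric_cremona}: for generic data in $\mathscr{S}$, the singular locus of the relevant intersection of quadrics in $\mathscr{S}$ is confined to an explicit union of three lines of $\PP^5_k$ that is disjoint from $S \cap L$. This disjointness guarantees the existence of a smooth $k$-point of $V$ outside $V \cap S$ in all configurations covered by the proposition, so that the rationality argument of the previous paragraph applies and the $C_1$ property forces $X(k)\neq\emptyset$.
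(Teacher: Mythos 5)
There is a genuine gap, and it is at the very first step. For a variety of type \cite[(1.2.3)]{MR3100918} the blow-down $\pi\colon X_{k'}\to Q'\subseteq\PP^4_{k'}$ (equivalently, the realization of $X_{k'}$ as the strict transform of $Q_1\cap L_1$ under $f\colon Y\to\PP^5$) exists only over a quadratic extension $k'/k$: the generator $\sigma$ of $\gal(k'/k)$ sends $H'$ to $2H'-E'$, i.e.\ it interchanges the two extremal contractions, which is precisely why $X$ has Picard rank $1$ over $k$ but geometric Picard rank $2$. Lemma \ref{lem:galoisaction_cremona}, which you cite for ``Galois-equivariance,'' says exactly that the descent datum on $X_{k'}\subseteq Y_{k'}$ is a $\sigma$-linear \emph{Cremona} transformation $\sigma\circ\varphi(M)\circ A$ of $\PP^5$, not a linear automorphism; in particular $f(\sigma(f^{-1}(L_1)))=Q_1\neq L_1$, so $V=Q_1\cap L_1$ is not cut out by forms with coefficients in $k$, the morphism $f|_X$ is not a $k$-morphism, and a point of $V(k')\smallsetminus S$ does not lift to a point of $X(k)$. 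The $C_1$/rationality argument you run on $V$ therefore produces only a $k'$-point of $X$, which is no progress. (Your appeal to Lemma \ref{lem:quadricintersection_cremona} for a ``singular case'' is also misplaced: the singular varieties of \cite[Theorem 6.6(iii)]{MR3100918} are treated separately in Proposition \ref{prop:(1.2.3)sing}, while Lemma \ref{lem:quadricintersection_cremona} is used in the smooth case for a different purpose, described below.)

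What the paper actually does is construct a $\gal(k'/k)$-invariant point of $X_{k'}$ directly. Since $v^{-1}(\Gamma)$ is a smooth conic and $\Br(k')=0$, the twisted quartic $\Gamma$ has a dense set of $k'$-points; pick $x\in\Gamma(k')$ and a general hyperplane $L_2\ni x$, and set $Q_2=f(\sigma(f^{-1}(L_2)))$, $Z=$ the strict transform of $Q_1\cap Q_2$. Lemma \ref{lem:quadricintersection_cremona} (together with Bertini) guarantees that $Q_1\cap Q_2$ is smooth at $x$, so that $f|_Z$ is an isomorphism near $x$ and
\[
f|_Z^{-1}(x)=Z\cap\sigma(Z)\cap\bigcap_{i=3}^{5}\bigl(f^{-1}(L_i)\cap\sigma(f^{-1}(L_i))\bigr)
\]
is a single, manifestly $\sigma$-invariant, point of $X_{k'}$, hence a $k$-point of $X$. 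Any correct proof has to confront the nontrivial twist of the descent datum in this way; an argument that finds a rational point on ``the'' quadric image and lifts it cannot work, because over $k$ there is no such image.
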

\begin{proof}
By \cite[pp.426--427]{MR3100918}, there exists a quadratic extension $k'$ of $k$ such that $X_{k'}$ is a blowing up of a smooth quadric $Q'\subseteq\PP^4_{k'}$ along a twisted quartic curve $\Gamma$. 
 Let $\pi:X_{k'}\to\PP^4_{k'}$ be the induced morphism. 
We identify $\PP^4_{k'}$ with a hyperplane $L_1$ of $\PP^5_{k'}$ such that $L_1\cap S=\Gamma$. Let $Q_1\subseteq\PP^5_{k'}$ be the unique quadric containing $S$ such that $Q_1\cap L_1=Q'$. Then $X_{k'}$ embeds into $Y_{k'}$ as the strict transform of $Q_1\cap L_1$ under $f$, and $\pi=f|_{X_{k'}}$.
 By Lemma \ref{lem:galoisaction_cremona} the action of $\gal(k'/k)$ on $X_{k'}$ is induced by an action of $\gal(k'/k)$ on $Y_{k'}$.
Let $\sigma$ be a generator of $\gal(k'/k)$. Then $f(\sigma(f^{-1}(L_1)))=Q_1$. 

By construction, $v^{-1}(\Gamma)$ is a smooth conic, so $\Gamma(k')$ is nonempty and hence dense in $\Gamma$ by Remark \ref{rem:brauer}. 
 Then the set of hyperplanes of $\PP^5_{k'}$ that intersect $\Gamma(k')$ is dense in $\PP(H^0(\PP^5_{k'},\OO_{\PP^5_{k'}}(1)))$.
  So we can choose a general hyperplane $L_2\subseteq\PP^5_{k'}$ that intersects $\Gamma(k')$,  such that $Q_2:=f(\sigma(f^{-1}(L_2))$ is a general member of $\mathscr{S}$, and hence $Q_1\cap Q_2$ is smooth outside the base locus $S$ of $\mathscr{S}$ by Bertini \cite[Theorem (4.1)]{MR1661859} and smooth at all $k'$-points in  $L_1\cap S$ by Lemma \ref{lem:quadricintersection_cremona}.
 
Let $x\in L_2\cap \Gamma(k')$.
Let $Z$ be the strict transform of $Q_1\cap Q_2$ under $f$. Then the morphism $f|_{Z}:Z\to Q_1\cap Q_2$, which is the blowing up of $Q_1\cap Q_2$ at $S$, is an isomorphism around $x$ because $S$ is a smooth divisor in $Q_1\cap Q_2$  and $x$ is a smooth point of $Q_1\cap Q_2$. We observe that  $f(\sigma(Z))=L_1\cap L_2$. 
 Let $L_3,L_4,L_5\subseteq\PP^5_{k'}$ be hyperplanes such that $L_1\cap L_2\cap L_3\cap L_4\cap L_5=\{x\}$. 
 Then 
\[f|_Z^{-1}(x)=Z\cap \sigma(Z)\cap \bigcap_{i=3}^5(f^{-1}(L_i)\cap \sigma(f^{-1}(L_i)))
\] consists of a $\gal(k'/k)$-invariant point of $X_{k'}$. 
\end{proof}

\subsubsection{Singular case}
\begin{prop}\label{prop:(1.2.3)sing}
Let $X$ be a variety \cite[Theorem 6.6(iii)]{MR3100918} defined over a field $k$ of characteristic $0$ with $\Br(k)=0$. Then $X$ has a $k$-rational point. 
\end{prop}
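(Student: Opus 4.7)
The plan is to imitate the proof of Proposition \ref{prop:(1.2.3)} while addressing the singularities now present on $X$. According to \cite[Theorem 6.6(iii)]{MR3100918}, after at most a quadratic extension $k'/k$ the variety $X_{k'}$ should again be realisable as the strict transform under $f:Y\to\PP^5_{k'}$ of an intersection $Q_1\cap L_1$, where $Q_1\in\mathscr{S}$ and $L_1$ is a hyperplane with $L_1\cap S=\Gamma$ a (possibly degenerate) twisted quartic. The weakening of the hypothesis from cohomological dimension $\leq 1$ to $\Br(k)=0$ is tailored to this setting: with $\Br(k')=0$ every relevant conic over $k'$ with a rational divisor class becomes a form of $\PP^1_{k'}$, so the plane conic $v^{-1}(\Gamma)\subseteq\PP^2_{k'}$ continues to possess a dense set of $k'$-points (its unique singular point descending to $k'$ when the conic is reducible). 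In particular $\Gamma(k')\neq\emptyset$.

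I would then apply Lemma \ref{lem:galoisaction_cremona} to lift the $\gal(k'/k)$-action on $X_{k'}$ to a $\sigma$-linear involution of $Y_{k'}$, so that once again $f(\sigma(f^{-1}(L_1)))=Q_1$. Fix $x\in\Gamma(k')$, choose a general hyperplane $L_2\subseteq\PP^5_{k'}$ through $x$, and set $Q_2:=f(\sigma(f^{-1}(L_2)))$. The main obstacle is to verify that $Q_1\cap Q_2$ is smooth at $x$: Lemma \ref{lem:quadricintersection_cremona} guarantees this when the fixed member $Q_1$ is allowed to be a \emph{general} element of $\mathscr{S}$, whereas here the specific $Q_1$ prescribed by the classification may be singular and is in any case constrained by the singular structure of $X$. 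I would circumvent this either by a direct calculation with the explicit equations of $Q_1$ and $\Gamma$ supplied in \cite[\S 6]{MR3100918}, or by observing that since $\sigma\circ f\circ \sigma\circ f^{-1}$ carries a general hyperplane through $x$ to a general element of $\mathscr{S}$, the smoothness conclusion of Lemma \ref{lem:quadricintersection_cremona} still applies at the $k'$-point $x$ in $L_1\cap S$ provided $x$ avoids the (finite, hence $k'$-rational, by $\Br(k')=0$) singular locus of $Q_1$, which can be arranged by a further generic choice of $x\in\Gamma(k')$.

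Once smoothness of $Q_1\cap Q_2$ at $x$ is in hand, the remainder of the argument transfers verbatim from Proposition \ref{prop:(1.2.3)}. Picking additional hyperplanes $L_3,L_4,L_5\subseteq\PP^5_{k'}$ with $L_1\cap L_2\cap L_3\cap L_4\cap L_5=\{x\}$ and letting $Z$ be the strict transform of $Q_1\cap Q_2$ under $f$, the intersection
\[
Z\cap\sigma(Z)\cap\bigcap_{i=3}^{5}\bigl(f^{-1}(L_i)\cap\sigma(f^{-1}(L_i))\bigr)
\]
is a $\gal(k'/k)$-invariant zero-dimensional subscheme of $X_{k'}$ supported at a single geometric point above $x$, and hence descends to a $k$-point of $X$. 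As an auxiliary shortcut, should the intrinsic singular locus $\sing(X)$ contain an isolated geometric point whose Galois orbit is trivial (which in many cases of \cite[Theorem 6.6(iii)]{MR3100918} can be read directly off the equations), one obtains a rational point on $X$ immediately, bypassing the Cremona construction entirely.
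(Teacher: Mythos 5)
Your plan to transplant the proof of Proposition \ref{prop:(1.2.3)} does not survive contact with the actual geometry of \cite[Theorem 6.6(iii)]{MR3100918}, and the paper's proof is in fact structured around exactly the features your proposal glosses over. In this case $X_{k'}$ is the blow-up of a quadric \emph{cone} $Q'\subseteq\PP^4_{k'}$ with vertex $\nu$ along a curve $\Gamma$ whose geometric components are two conics $\Gamma^{(1)},\Gamma^{(2)}$ meeting transversally at one point $\gamma$; in the paper's coordinates $v^{-1}(\Gamma)$ is the reducible conic $x_1^2-u^2x_2^2=0$, whose two lines are conjugate over $k'$. Consequently a $\gal(\overline k/k')$-fixed point of $\Gamma$ must lie on both components, so $\Gamma(k')=\{\gamma\}$ in the nonsplit case: your claim that $\Gamma(k')$ is dense is false, and with it goes all the room for ``a further generic choice of $x\in\Gamma(k')$''. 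You are forced to take $x=\gamma$, the node of the blow-up center, which is precisely where the smooth-case argument (smoothness of $Q_1\cap Q_2$ along $S\cap L_1$, plus $f|_Z$ being an isomorphism near $x$) is most delicate. Your fallback that Lemma \ref{lem:quadricintersection_cremona} ``still applies'' away from $\sing(Q_1)$ is not available either: that lemma's proof normalizes $Q$ to $A_0+A_3$ using that the \emph{smooth} members of $\mathscr{S}$ form a single $\PGL_3$-orbit, and here $Q_1$ is singular (it restricts to the cone $Q'$ on $L_1$), so the entire computation would have to be redone and is not a routine perturbation.

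Your ``auxiliary shortcut'' also fails for a structural reason: the singular points of $X_{k'}$ sit over $\nu$ and over $\gamma$, and the paper computes that the $\sigma$-linear involution $\sigma_X$ sends the point over $\nu$ to a point over $\gamma$ (because $\sigma_X$ exchanges $|H'|$ and $|2H'-E'|$), so the singular points form a nontrivial Galois orbit rather than fixed points. The paper turns this swap into the proof: it takes $\ell$ to be the strict transform of the line joining $\nu$ and $\gamma$, shows $\ell\cong\PP^1_{k'}$, proves via Lemma \ref{lem:galoisaction_cremona} and an explicit computation with the symmetric Cremona involution $\sigma\circ\varphi(M)\circ A$ that $\ell$ is $\sigma_X$-invariant, and then uses $\Br(k)=0$ to produce a $\sigma_X$-fixed $k'$-point on $\ell$, i.e.\ a $k$-point of $X$. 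That identification of a canonical invariant line is the missing idea in your proposal.
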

\begin{proof}
By \cite[p.432]{MR3100918} there exists a quadratic extension $k'$ of $k$, contained in an algebraic closure $\overline k$ of $k$, such that $X_{k'}$ is a blowing up of a quadric cone $Q'\subseteq\PP^4_{k'}$ with center a curve $\Gamma$ such that $\Gamma$ does not contain the vertex $\nu$ of $Q'$ and $\Gamma_{\overline k}$ is the union of two conics $\Gamma^{(1)}$ and $\Gamma^{(2)}$ that intersect each other transversally. 
Let $\sigma$ be the generator of $\gal(k'/k)$, and let $\sigma_X$ be the induced $\sigma$-linear automorphism of $X_{k'}$. 
By \cite[p.432]{MR3100918} $X$ contains at least 2 singular points.
 Since $f(\sing(X_{\overline k}))\smallsetminus\Gamma=\{\nu\}$, the $k'$-point $\gamma:=f(\sigma_X(f_1^{-1}(\nu)))$ belongs to $\sing(\Gamma)=\Gamma^{(1)}\cap\Gamma^{(2)}$.
Let $\ell$ be the strict transform of the line between $\nu$ and $\gamma$. 
Since $\ell\cong\PP^1_{k'}$, 
 it suffices to show that $\ell$ is invariant under $\sigma_X$, because in that case $\ell$ contains a $\sigma_X$-invariant $k'$-point, as $\Br(k)=0$.

Let $\pi: X_{k'}\to\PP^4_{k'}$ be the morphism induced by the blowing up.
Let $H'$ be the pullback of a hyperplane of $\PP^4_{k'}$ under $\pi$. Let $E'$ be the exceptional divisor of the blowing up. 
Then $\sigma_X(H')=2H'-E'$ in $\pic(X_{k'})$ by \cite[(5.2.1), Theorem 6.1, and p.432]{MR3100918}.
Let $\mathcal{S}$ be the linear system of quadrics in $\PP^4_{k'}$ that contain $\Gamma$.
Since $\pi$ is defined by the complete linear system $|H'|$, 
 $|2H'-E'|$ is the linear system of strict transforms of elements of $\mathcal{S}|_{Q'}$. Thus $\dim\mathcal{S}-1=\dim\mathcal{S}|_{Q'}=\dim |2H'-E'|=\dim H'=5$ as $k'$-vector spaces. 

Let $P_i\subseteq\PP^4_{\overline k}$ be the plane spanned by $\Gamma^{(i)}$ for $i\in\{1,2\}$. 
We claim that $P_1\cap P_2=\{\gamma\}$. Clearly $P_1\cap P_2$ contains $\gamma$ because it contains the intersection $\Gamma^{(1)}\cap \Gamma^{(2)}$.
If $P_1=P_2$ then $\dim \mathcal{S}\geq 9$ as $k'$-vector space, which is a contradiction.
 If $P_1\cap P_2$ has dimension 1, the linear system $\mathcal{S}'$ of quadrics containing $P_1\cup P_2$ has dimension 6, hence $\mathcal{S}=\mathcal{S}'$. So the base locus of $\mathcal{S}$ contains $(P_1\cup P_2)\smallsetminus\Gamma$, contradicting the fact that $\sigma_X(|H'|)=|2H'-E'|$ is base point free.

Now we show that $X$ is a degeneration of a Fano threefold as in Proposition \ref{prop:(1.2.3)}.
Let $k''/k'$ be a quadratic extension such that $\Gamma^{(1)}$ and $\Gamma^{(2)}$ are defined on $k''$. 
Let $u\in k''$ such that $k''=k'(u)$ and $u^2\in k'$.
 Without loss of generality, we can choose coordinates $z_0,\dots,z_4$ on $\PP^4_{k'}$ such that $\Gamma^{(1)}=\{z_3-uz_2=z_1-uz_4=z_0z_2-z_4^2\}$ and $\Gamma^{(2)}=\{z_3+uz_2=z_1+uz_4=z_0z_2-z_4^2\}$.
 Let $w:\PP^4_{k'}\subseteq\PP^5_{k'}$ be the embedding given by $(z_0:\dots:z_4)\mapsto(z_0:u^2z_2:z_2:z_3:z_4:z_1)$. Then the image of $w$ is the hyperplane $L=\{y_1-u^2y_2=0\}$, also $w(\Gamma)=L\cap S$ and $v^{-1}(L\cap S)$ is the degenerate conic  $\{x_1^2-u^2x_2^2=0\}\subseteq\PP^2_{k'}$.
Since $\dim\mathcal{S}=6=\dim\mathscr{S}$ as $k'$-vector spaces and  $\mathscr{S}|_L=\mathcal{S}$, 
there exists a unique quadric $Q$ in $\mathscr{S}$ such that $Q\cap L=w(Q')$. 
Then $X_{k'}$ embeds into $Y_{k'}$ as the strict transform of $Q\cap L$ under $f$, and $\pi=f|_{X_{k'}}$. 

By Lemma \ref{lem:galoisaction_cremona} there is $M\in \GL(3,k')$ such that the $\sigma$-linear Cremona transformation
$\sigma\circ\varphi(M)\circ A$ defines a $\sigma$-linear involution of $Y$ that restricts to $\sigma_X$ on $X_{k'}$. 
The quadric $Q$ is the image of $L$ under  $\sigma\circ\varphi(M)\circ A$ and has equation $q=(0,1,-\sigma(u)^2,0,0,0)\cdot \varphi(M)\cdot (A_0,\dots,A_5))$.
 Let $U:=(Q\cap L)\smallsetminus\pi(E'\cup\sigma_X(E'))$. Computations show that $\pi(\ell)\cap\pi(E'\cup\sigma_X(E'))=\{\gamma,\nu\}$, and $(\sigma\circ\varphi(M)\circ A)(\pi(\ell)\cap U)=\pi(\ell)\cap U$. Thus, $\ell$ is invariant under $\sigma_X$.
  \end{proof}

\section{Terminal Fano varieties of large index}
\label{sec:fano_large_index}

 \begin{proof}[Proof of Theorem \ref{thm:largeindex_Fano}]
We denote by $\overline k$ an algebraic closure of $k$.
We recall that  $r(X)\leq n+1$ by a result of Shokurov (cf.~\cite[Corollary 2.1.13]{MR1668579}). 

  Gorenstein terminal Fano varieties of index $r(X)\geq n$ are classified in \cite[Theorem 0]{MR664549} (cf.~\cite[Theorem 3.1.14]{MR1668579}).
 If $r(X)=n+1$, then $X_{\overline k}\cong\PP^n_{\overline k}$, hence, $X\cong\PP^n_k$  by Remark \ref{rem:brauer}.
If $r(X)=n$, then the linear system $|-\frac 1nK_X|$ embeds $X$ as a quadric hypersurface in $\PP^{n+1}_k$. Hence $X(k)\neq\emptyset$ by definition of $C_1$ field.
 
Gorenstein terminal Fano varieties of index $r(X)=n-1$ are del Pezzo varieties (see \cite[Remarks 3.2.2]{MR1668579}). 
Let $X$ be a del Pezzo $k$-variety of dimension $n$. We prove that $X(k)\neq\emptyset$ by induction on $n$.
If $n=2$, $X_{\overline k}$ is a smooth rational surface, hence $X(k)\neq\emptyset$ by \cite[Proposition 2]{MR934267}.
Assume that $n>2$. By \cite[Proposition 3.2.3]{MR1668579}
the general element of the linear system $-\frac1{n-1}K_X$ is a del Pezzo $k$-variety of dimension $n-1$, hence it has a $k$-rational point by inductive hypothesis. Thus $X(k)\neq\emptyset$.
 
Non-Gorenstein terminal Fano varieties of index  $r(X)> n-2$ are classified in \cite{MR1399470}  and have a rational point by Proposition \ref{prop:toric} and \cite[Theorem IV.6.7]{MR1440180}.
 \end{proof}

\section{Terminal Fano threefolds}\label{sec:fano_3fold}

\subsection{Gorenstein case}\label{sec:fano_3fold_gor}
We prove Theorem \ref{thm:3dim_Fano_gor}.

\begin{proof}[Proof of Theorem \ref{thm:3dim_Fano_gor}]
We recall that the index $r(X)$ of a Gorenstein terminal Fano variety $X$ is a positive integer.
Terminal Fano threefolds of index $\geq 2$ are covered by Theorem \ref{thm:largeindex_Fano}. Hence, we need to consider only the cases of index $1$.

$\QQ$-factorial Gorenstein terminal Fano threefolds of Picard rank 1 and geometric Picard rank $>1$ are classified in \cite[Theorem 1.2, \S2, Theorem 6.6]{MR3100918} (cf.~Section \ref{sec:rkpic>1}).

The varieties \cite[(1.2.4), (1.2.7)]{MR3100918} have index 2 by \cite[Remarks (vi) p. 217]{MR1668579}.

The varieties \cite[Case 1.2.1 a), p.~421]{MR3100918} and 
\cite[ (1.2.2), (1.2.8)]{MR3100918} are $k$-models of complete intersections of divisors in products of projective spaces that have a $k$-rational point by \cite[Theorem 1.1]{forms_ci} and Proposition \ref{prop:i_prod_wproj} (cf.~\cite[Remark 2.1]{MR3100918}).

The variety \cite[Case 1.2.1 b), p.~421]{MR3100918} is a  double cover $\pi:X\to V$ of a variety $V$ that belongs to \cite[(1.2.4)]{MR3100918} with branch locus a member of $|-K_V|$. 
The morphism $\pi:X_{\overline k}\to \PP^2_{\overline k}\times\PP^2_{\overline k}$ is defined over $k$ (see \cite[proof of Theorem 6.6 (ii)]{MR3100918}) and the $\gal(\overline k/k)$-action on $V_{\overline k}$ is induced by the one on $\PP^2_{\overline k}\times\PP^2_{\overline k}$. 
We observe that $V$ is smooth by \cite[Theorem 6.6 (i)]{MR3100918}, and $\pic(V_{\overline k})\cong\pic(\PP^2_{\overline k}\times\PP^2_{\overline k})$ by  \cite[Corollary IV.3.3]{MR0282977}. Then $\rk\pic(\PP^2_{\overline k}\times\PP^2_{\overline k})^{\gal(\overline k/k)}=\rk\pic(V)=1$.
The variety $V_{\overline k}$ is a hypersurface of bidegree $(1,1)$ in $\PP^2_{\overline k}\times\PP^2_{\overline k}$. Moreover, $-K_V=(2,2)$ in $\pic(V_{\overline k})$. Hence, the branch locus of $\pi$ is defined by the restriction to $V$ of a hypersurface of degree $(2,2)$ in $\PP^2_{\overline k}\times\PP^2_{\overline k}$. Then $X(k)\neq\emptyset$ by Proposition \ref{prop:cyclic}.

The varieties \cite[(1.2.3)]{MR3100918} have a $k$-rational point by Propositions \ref{prop:(1.2.3)} and \ref{prop:(1.2.3)sing}. 

The variety \cite[(1.2.5)]{MR3100918} is a $k$-model $X$ of a double cover of $\PP^1_{\overline k}\times\PP^1_{\overline k}\times\PP^1_{\overline k}$ with branch locus a member of $|-K_{\PP^1_{\overline k}\times\PP^1_{\overline k}\times\PP^1_{\overline k}}|$ (which is a divisor of tridegree $(2,2,2)$). 
By \cite[Lemma 4.4]{MR3100918}, the composition with the first projection $f_1:X_{\overline k}\to\PP^1_{\overline k}$ is a del Pezzo bundle with general fiber $F_1$, let $\{F_1,F_2,F_3\}$ be the $\gal(\overline k/k)$-orbit of $F_1$ in $\pic(X_{\overline k})$. Then there are other two del Pezzo bundles $f_1,f_2$ conjugate to $f_1$. Then product $f=f_1\times f_2\times f_3:X_{\overline k}\to \PP^1_{\overline k}\times \PP^1_{\overline k}\times \PP^1_{\overline k}$ is a finite map and it is defined over $k$.
Thus $X$ is a double cover of a $k$-model $Y$ of $\PP^1_{\overline k}\times\PP^1_{\overline k}\times\PP^1_{\overline k}$. Since $\rk\pic(X)=1$, also $\rk\pic(Y)=1$ 
 and hence $\pic(Y)$ is the subgroup of $\pic(\PP^1_{\overline k}\times\PP^1_{\overline k}\times\PP^1_{\overline k})$ generated by $(1,1,1)$. Then $X$ has a $k$-point by Proposition \ref{prop:cyclic}.

The variety \cite[(1.2.6)]{MR3100918} is a $k$-model $X$ of an intersection of divisors of tridegrees $(0,1,1)$, $(1,0,1)$, $(1,1,0)$ in $\PP^2_{\overline k}\times\PP^2_{\overline k}\times\PP^2_{\overline k}$. 
Let $\pi_1,\pi_2,\pi_3:\PP^2_{\overline k}\times\PP^2_{\overline k}\times\PP^2_{\overline k}\to \PP^2_{\overline k}\times\PP^2_{\overline k}$ be the three projections.
The $\gal(\overline k/k)$-action on $\PP^2_{\overline k}\times\PP^2_{\overline k}\times\PP^2_{\overline k}$ is by permutation of the factors by Lemma \ref{lem:forms_product_proj_space}, and $\rk\pic(\PP^2_{\overline k}\times\PP^2_{\overline k}\times\PP^2_{\overline k})^{\gal(\overline k/k)}=1$. So, for every $i,j\in\{1,2,3\}$, there exists $g_{i,j}\in\gal(\overline k/k)$ such that $\pi_j=\pi_i\circ g_{i,j}$. 
Then $\pi_i(X)=\pi_i\circ g_{i,j}(X)=\pi_j(X)$ for all $i,j\in\{1,2,3\}$.
By \cite[Case 1.2.6, p.~422]{MR3100918}, $\pi_1(X_{\overline k})\subseteq\PP^2_{\overline k}\times\PP^2_{\overline k}$ is a hypersurface of degree $(1,1)$. So 
 $X_{\overline k}=\pi_1^{-1}(\pi_1(X))\cap\pi_2^{-1}(\pi_1(X))\cap\pi_3^{-1}(\pi_1(X))$.
Since $\pi_j^{-1}(\pi_1(X))=g_{i,j}^{-1}(\pi_i^{-1}(\pi_1(X))$ for every $i,j\in\{1,2,3\}$, $X_{\overline k}$ is a complete intersections of hypersurfaces $H_1,H_2,H_3$  in $\PP^2_{\overline k}\times\PP^2_{\overline k}\times\PP^2_{\overline k}$
of degrees $(0,1,1)$, $(1,0,1)$, $(1,1,0)$ respectively, such that $\{H_1,H_2,H_3\}$ is a $\gal(\overline k/k)$-invariant set under the action of $\gal(\overline k/k)$ over $X_{\overline k}$. 
Hence, $X(k)\neq\emptyset$ by Proposition \ref{prop:i_prod_wproj}.

Indecomposable Gorenstein terminal Fano threefolds of index 1 and geometric Picard rank 1 are classified in \cite[Theorems 1.10,  6.5]{MR1944132} (cf.~Sections \ref{sec:smooth_mukai} and \ref{sec:sing_gor_rkpic_1}).
 Let $X$ be an indecomposable Gorenstein terminal Fano threefold of index 1 and geometric Picard rank 1. We use the notation introduced in Section \ref{sec:smooth_mukai}.

If $g=2$, then 
$X\subseteq\PP(1,1,1,1,3)$ is a hypersurface of degree 6. Thus $X(k)\neq\emptyset$ by  \cite[Theorem IV.6.7]{MR1440180}.

If $g=3$ and $X$ is a quartic hypersurface in $\PP^4_k$, then $X$ has a $k$-rational point by definition of $C_1$ field. If $g=3$ and  $X$ is a double cover of a quadric hypersurface $Q$ in $\PP^4_k$, then it is ramified along the restriction to $Q$ of a hypersurface of degree 4 (see \cite[(4.3.2)]{MR2112596}).
Thus  $X\subseteq\PP(1,1,1,1,1,2)$ is a complete intersection of two hypersurfaces of degrees 2 and 4, and $X(k)\neq\emptyset$ by \cite[Theorem IV.6.7]{MR1440180}.

If $g\in\{4,5\}$ the variety $X$ has a $k$-rational point by  \cite[Theorem 4]{MR0046388}.
 
If $g=12$ and $X$ is singular, then its singular locus consists of a $k$-point by \cite[Theorem 1.3]{MR3535377}.  If $g=12$ and $X$ is smooth, then the Fano variety of conics of $X$ is isomorphic to $\PP^2_k$ by \cite[Proposition B.4.1]{MR3776469} and 
Remark \ref{rem:brauer}. 
Hence, $X$ contains smooth conics defined over $k$,  which have $k$-points again by  Remark \ref{rem:brauer}.
\end{proof}

As recalled in Section \ref{sec:sing_gor_rkpic_1}, 
decomposable terminal Gorenstein Fano threefolds of index 1 and geometric Picard rank 1 are not completely classified. However, the cases with $g\in\{2,3,4,5\}$ 
 are completely classified in \cite[Theorem 6.5]{MR1944132}. Moreover, if $g=12$ the only decomposable cases we are interested in are classified in \cite[Theorem 1.3]{MR3535377}. These cases are covered by Theorem \ref{thm:3dim_Fano_gor}.

\subsection{Non-Gorenstein case}\label{subsection:non-gorenstein}
We start by considering 
non-Gorenstein terminal Fano threefolds of index 1 with only cyclic quotient singularities, which are classified in \cite{MR1317287}.

\begin{theorem}\label{thm:non_gor_cyclic}
Let $X$ be a non-Gorenstein terminal Fano threefold of index 1 with only cyclic quotient singularities over a  $C_1$ field $k$ of characteristic 0 that admits normic forms of arbitrary degree. Then $X(k)\neq\emptyset$.
\end{theorem}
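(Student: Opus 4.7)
The plan is to reduce the theorem to a case-by-case verification using Sano's classification \cite{MR1317287} of non-Gorenstein terminal Fano threefolds of index 1 with only cyclic quotient singularities, and then apply the rational-point existence results developed earlier in Section \ref{sec:varieties_with_points}. According to \cite{MR1317287}, after base change to $\overline k$ each such variety $X_{\overline k}$ falls into one of finitely many geometric families, each of which can be described very explicitly: as a weighted complete intersection in a (product of) weighted projective space(s), as a toric variety, or as a cyclic covering of such a variety branched along a divisor in a known linear system.

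For each family I would proceed as follows. For the geometrically toric cases, Proposition \ref{prop:toric} applies directly and produces a $k$-point (the $C_1$ hypothesis yields $\mathrm{cd}(k)\leq 1$ by Remark \ref{rem:brauer}). For the cases where $X_{\overline k}$ is a weighted complete intersection in a (product of) weighted projective space(s), I would verify that the numerical hypothesis $\sum_t d_{i,j,t}<\sum_l a_{i,l}$ of Proposition \ref{prop:i_prod_wproj} holds, checking the degree/weight inequality case by case from Sano's tables; because $-K_X$ is ample and the varieties are Fano, this inequality is automatic, just as in the Gorenstein case handled in Theorem \ref{thm:3dim_Fano_gor}. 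In the cyclic cover cases, I would check the numerical hypothesis $\sum_{i=0}^r d_i\le m+d_0/d$ of Proposition \ref{prop:cyclic} and that $\rk\pic(Y)=1$ for the base $Y$; the Picard rank condition is inherited from the hypothesis of geometric Picard rank $1$ combined with the fact that the cover is cyclic, and the degree inequality is again a Fano-type bound that can be read off from the list.

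A preliminary step, required to invoke the Galois-theoretic versions of Propositions \ref{prop:i_prod_wproj} and \ref{prop:cyclic}, is to identify the $\gal(\overline k/k)$-action on the ambient variety. Here I would argue as in the proof of Theorem \ref{thm:3dim_Fano_gor}: since $k$ has trivial Brauer group (Remark \ref{rem:brauer}), $k$-forms of products of (weighted) projective spaces are classified by the permutation cohomology of Lemma \ref{lem:forms_product_proj_space}, so the Galois action permutes factors in orbits as in Notation \ref{notation:permutation_action}, and the defining equations of $X$ inside the ambient space form $\gal(\overline k/k)$-stable collections. This reduces each case to a direct application of the relevant proposition.

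The main obstacle I anticipate is bookkeeping: Sano's classification contains many entries, and for each one I must (i) specify a $k$-form of the ambient space in the sense of Notation \ref{notation:permutation_action}, (ii) identify the right proposition, and (iii) verify the numerical inequality. There is no single conceptual difficulty, but some entries in \cite{MR1317287} may require a small trick — for instance, when the anticanonical embedding places $X$ in a weighted projective space where the inequality of Proposition \ref{prop:i_prod_wproj} is tight, one may need to use the existence of normic forms of arbitrary degree (rather than the bare $C_1$ property) exactly as in \cite[Theorem IV.6.7]{MR1440180}, explaining why the normic-form hypothesis appears in the statement. If any entry resists a direct application of these propositions, I would fall back on the strategy of taking a general hyperplane section (using base-point freeness of a suitable multiple of $-K_X$, as in the large-index proof of Theorem \ref{thm:largeindex_Fano}) to reduce to a lower-dimensional $C_1$ problem, which is guaranteed to have a solution since algebraic extensions of $C_1$ fields are $C_1$ (Remark \ref{rem:brauer}).
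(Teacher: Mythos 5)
Your plan rests on a mischaracterization of what the classification in \cite{MR1317287} actually provides. Sano's Theorem~1.1 does not present these threefolds $X$ as weighted complete intersections, toric varieties, or cyclic covers branched along divisors; it presents each $X$ as the quotient of a \emph{smooth} Fano threefold $Y$ by an involution with isolated fixed points, i.e.\ it supplies a double covering $Y\to X$ that is \'etale in codimension one, with $Y$ running through $14$ explicit smooth families. So the direct applications of Propositions \ref{prop:toric}, \ref{prop:i_prod_wproj} and \ref{prop:cyclic} to $X$ that you envisage have nothing to attach to (note in particular that Proposition \ref{prop:cyclic} requires a reduced ramification \emph{divisor}, which is the opposite situation from a quotient ramified only at points), and your fallback of cutting by general hyperplane sections does not engage with the non-Gorenstein singularities at all. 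The argument the paper actually needs is entirely absent from your proposal: one first shows that the covering $Y\to X$ is defined over $k$ (this uses the Galois-invariance of the relevant divisor class, via Lemma \ref{lem:index}), and then finds a $k$-point on the smooth threefold $Y$ upstairs, which maps to a $k$-point of $X$.

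Finding that point on $Y$ is itself a nontrivial case analysis that your proposal does not anticipate. When $Y$ has index $2$ one invokes Theorem \ref{thm:largeindex_Fano}; when $Y$ has index $1$ and Picard rank $1$ over $k$ one invokes Theorem \ref{thm:3dim_Fano_gor}; and in the remaining cases, where $Y$ has Picard rank $\geq 2$ over $k$, one argues family by family using blow-up structures onto index-$2$ Fanos together with \cite{MR0095851}, product decompositions $C\times_k S_d$ with $C$ a conic and $S_d$ a del Pezzo surface, del Pezzo fibrations over conics, conic bundles over $\PP^2_k$ coming from extremal contractions defined over $k$, and only in two of the fourteen cases (No.~9 and No.~11) the complete-intersection result Proposition \ref{prop:i_prod_wproj}. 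Your numerical-inequality bookkeeping and the normic-forms discussion are sensible in themselves, but they address the wrong objects; without the reduction to the smooth double cover the proof does not get off the ground.
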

\begin{proof}
Let $\overline k$ be an algebraic closure of $k$.
 By Lemma \ref{lem:index} 
the double covering in \cite[Theorem 1.1]{MR1317287} is 
defined over $k$, we denote it by $Y\to X$. Moreover, $Y$ is a smooth Fano threefold, and there are $14$ possible cases for $Y$.

In the cases \cite[Theorem 1.1, No. 8, 12, 14]{MR1317287} the variety $Y$ has index 2, then $Y(k)\neq\emptyset$ by Theorem \ref{thm:largeindex_Fano}.
In the remaining cases the variety $Y$ has index $1$. If $Y$ has Picard rank $1$ over $k$, then $Y(k)\neq\emptyset$ by Theorem \ref{thm:3dim_Fano_gor}. 
Therefore we can assume that $Y$ has Picard rank $\geq2$ over $k$. 

In the case \cite[Theorem 1.1, No. 3, 7]{MR1317287} (see also \cite[\S12.3, No. 3, 10]{MR1668579}), the variety $Y$ is a blow-up of a smooth Fano threefold of index 2 that has a $k$-point by Theorem \ref{thm:largeindex_Fano}. Then $Y(k)\neq\emptyset$ by \cite{MR0095851}. 

In the cases \cite[Theorem 1.1, No. 4, 10, 13]{MR1317287}, by studying the automorphisms of $Y_{\overline k}$ as in  \cite[\S\S 6.5.1, 6.6.1, 6.6.2]{MR1268578} we conclude that $Y$ is isomorphic to $C\times_k S_d$, where $C$ is a smooth conic and $S_d$ is a smooth del Pezzo surface of degree $d\in\{2, 4, 6\}$. Since $C(k)\neq\emptyset$ by Remark \ref{rem:brauer} and $S_d(k)\neq\emptyset$ by \cite[Proposition 2]{MR934267}, we conclude that  $Y(k)\neq\emptyset$.

In the case \cite[Theorem 1.1, No. 5]{MR1317287} (see also \cite[\S12.4, No. 1]{MR1668579}),
the variety $Y$ is a double covering of a $k$-form $Z$ of $\PP^1_k\times\PP^1_k\times\PP^1_k$ ramified along a divisor of tridegree $(2,2,2)$. 
Since $\rk\pic(Y_{\overline k})=3$ we deduce that $\rk\pic(Z)=\rk\pic(Y)\geq 2$. By Lemma \ref{lem:forms_product_proj_space} this can only happen if $Z$ is a product of a smooth conic $C$ with a $k$-form of $\PP^1_k\times\PP^1_k$. 
By \cite[Lemma 4.4]{MR3100918}, the composition with the projection $Z\to C$ is a del Pezzo bundle structure on $Y$. By Remark \ref{rem:brauer} $C(k)$ is nonempty, and hence Zariski dense in $C$. Thus $Y$ contains a smooth del Pezzo surface defined over $k$ that has a $k$-point by \cite[Proposition 2]{MR934267}.

In the case \cite[Theorem 1.1, No. 6]{MR1317287},
 since $\rk\pic(Y)=2$, the two extremal contractions from \cite[Case ($C_1$,$C_1$), p.140]{MR1668579} are defined over $k$. Hence 
 there is a dominant morphism $Y\to \PP^2_k$ whose general fiber is a smooth conic, and hence contains $k$-points by Remark \ref{rem:brauer}.

In the case \cite[Theorem 1.1, No. 9]{MR1317287}, $Y$ is a $k$-model of a complete intersection of three divisors of tridegree $(1,1)$ in $\PP^3_{\overline k}\times\PP^3_{\overline k}$, hence $Y$ is a complete intersection of three divisors of bidegree $(1,1)$ in a $k$-form of $\PP^3_{k}\times\PP^3_{k}$ by \cite[Theorem 1.1]{forms_ci} and $Y(k)\neq\emptyset$ by Proposition \ref{prop:i_prod_wproj}.

In the case \cite[Theorem 1.1, No. 11]{MR1317287}, $Y$ is a $k$-model of a divisor of multidegree $(1,1,1,1)$ in $\PP^1_{\overline k}\times\PP^1_{\overline k}\times\PP^1_{\overline k}\times \PP^1_{\overline k}$, hence $Y(k)\neq\emptyset$ by Proposition \ref{prop:i_prod_wproj}.
\end{proof}

The Graded Ring Database \cite{GRD} contains the list of possible baskets \cite[\S2]{MR2374989} of non-Gorenstein singularities for geometrically $\QQ$-factorial non-Gorenstein terminal  Fano threefolds of geometric Picard rank 1 over an algebraically closed field of characteristic 0. Such varieties $X$ are studied in \cite{MR2374989} using the Fano index $f(X)$ defined in the introduction.
The Fano index $f(X)$ divides $q(X)$ and they coincide if there is no torsion in the Weil divisor class group (see \cite[Corollary 2.3]{MR2374388} and  \cite[Lemma 3.2]{MR2745685}). In particular, $f(X)\in\{1,\dots,9,11,13,17,19\}$ by \cite{MR2067795}.

\begin{remark}\label{rem:basket}
The basket of singularities of a terminal Fano variety  was introduced in \cite[(8.2), (10.2)]{MR927963}. It  is a collection of quotient singularity germs. To each non-Gorenstein singular point of the variety there is an associated collection of quotient singularity germs.
The disjoint union of such collections forms the basket of the variety. 
The collection of quotient singularity germs associated to a singular point is invariant under the automorphisms of the variety. Therefore, if a given quotient singularity germ appears only once in a basket (we say it has multiplicity one in the basket), then the corresponding singular point is  a fixed point for all the automorphisms of the variety.
\end{remark}

\begin{theorem}\label{thm:non-gor_large}
Let $k$ be a field of characteristic $0$ and $\overline k$ an algebraic closure.
Let $X$ be a 
 non-Gorenstein terminal Fano threefold over $k$ such that $X_{\overline k}$ is $\QQ$-factorial and $\rk\pic(X_{\overline k})=1$.
\begin{enumerate}
\item If  $f(X_{\overline k})\geq 6$, then $X(k)\neq\emptyset$.
\item If $f(X_{\overline k})=5$ and $k$ is $C_1$ and admits normic forms of arbitrary degree, then $X(k)\neq\emptyset$.
\end{enumerate}
\end{theorem}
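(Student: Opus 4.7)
My plan is to go through the finite list of admissible baskets of non-Gorenstein quotient singularities for geometrically $\QQ$-factorial terminal Fano threefolds of geometric Picard rank one with Fano index $f(X_{\overline k})\geq 5$, as recorded in the Graded Ring Database \cite{GRD}, and verify the conclusion basket by basket. The central tool is Remark \ref{rem:basket} applied Galois-equivariantly: since the $\gal(\overline k/k)$-action on $\sing(X_{\overline k})$ permutes singular points within each fixed quotient-singularity germ type, any germ appearing with multiplicity one in the basket corresponds to a $\gal(\overline k/k)$-invariant $\overline k$-point, i.e., a $k$-point of $X$.

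For part (1), I would read the database and check that every admissible basket with $f(X_{\overline k})\geq 6$ contains at least one germ of multiplicity one; this immediately yields a $k$-point with no hypothesis on $k$ beyond characteristic zero. For part (2), the $f=5$ baskets split into those that still contain a multiplicity-one germ, handled as above, and a short list of ``bad'' baskets in which every germ has multiplicity $\geq 2$. For the bad baskets, I would use the Weil divisor $A$ with $-K_{X_{\overline k}}\sim 5A$: since $X_{\overline k}$ is $\QQ$-factorial of Picard rank one, $\Cl(X_{\overline k})\otimes\QQ$ is one-dimensional, and by an argument analogous to Lemma \ref{lem:index} the class of $A$ is $\gal(\overline k/k)$-invariant modulo torsion, so a small multiple $mA$ is genuinely Galois-invariant and Cartier. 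Combined with $\Br(k)=0$ (Remark \ref{rem:brauer}), Lemma \ref{lem:split_line_bundle}, and Remark \ref{rem:forms_product_proj_space}, this lets me descend an embedding of $X_{\overline k}$ as an orbit complete intersection (in the sense of Proposition \ref{prop:i_prod_wproj}) inside a $k$-form of a product of weighted projective spaces, and conclude via Proposition \ref{prop:i_prod_wproj} under the normic-forms hypothesis.

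The main obstacle is precisely this last step: each $f=5$ bad basket has to be matched to an explicit anticanonical presentation, and the degree inequality of Proposition \ref{prop:i_prod_wproj} --- that in each Galois orbit the weighted degrees of the defining equations sum to strictly less than the total weight of the orbit --- has to be verified by hand. Since $f=5$ is a borderline Fano index, the inequality can be tight or even fail; in the failing cases one likely has to imitate the ad hoc constructions of Section \ref{sec:fano_3fold_gor} (cyclic covers, specific weighted hypersurfaces, or direct Galois-descent arguments) to still exploit the $C_1$ hypothesis.
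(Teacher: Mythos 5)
Your proposal follows essentially the same route as the paper: part (1) and most of part (2) are exactly the multiplicity-one-germ argument via Remark \ref{rem:basket}, and the residual $f=5$ cases are handled by a weighted-complete-intersection/$C_1$ argument. The only difference is that the database inspection leaves exactly one exceptional basket (ID 41439), realized as a degree-$6$ hypersurface in $\PP(1,2,2,3,3)$ where $6<1+2+2+3+3$, so Lang's theorem \cite[Theorem IV.6.7]{MR1440180} applies directly and none of the heavier descent machinery (Lemma \ref{lem:split_line_bundle}, orbit complete intersections in products) that you anticipate is actually needed.
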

\begin{proof}
By inspection in the Graded Ring Database \cite[Fano 3-folds]{GRD},
if $f(X_{\overline k})\geq 6$ each possible basket of singularities for $X$ contains a quotient singularity germ that appears with multiplicity one. Hence, the corresponding singular point on $X(\overline k)$ is invariant under the action of $\gal(\overline k/k)$ by Remark \ref{rem:basket}.
If $f(X_{\overline k})=5$ the same argument works for all possible baskets of singularities except one \cite[Fano 3-folds, ID 41439]{GRD}, which is realized by a hypersurface of degree $6$ in  $\PP(1, 2, 2, 3, 3)$, and hence has a $k$-point by \cite[Theorem IV.6.7]{MR1440180}.
\end{proof}

The Graded Ring Database \cite{GRD} contains $1847$ cases with Fano index $f\in\{2,3,4\}$ and $52646$ cases with Fano index $f=1$.
The arguments used in Theorem \ref{thm:non-gor_large} work for all except $9$ cases if $f=4$, for all except $13$ cases if $f=3$, for all except $109$ cases if $f=2$, and for at least $1581$ cases if $f=1$ (the basket has cardinality $1$ in $1415$ cases,  the variety is a Fano complete intersection of codimension at most $2$ in a weighted projective space in $166$ further cases).

\bibliographystyle{alpha}
\bibliography{ratconn_preprints,../../../../../bibliography}

\newcommand{\etalchar}[1]{$^{#1}$}
\def\cprime{$'$}
\begin{thebibliography}{MGSdSSdS18}

\bibitem[Apr13]{MR3051369}
M.~Aprodu.
\newblock Lazarsfeld-{M}ukai bundles and applications.
\newblock In {\em Commutative algebra}, pages 1--23. Springer, New York, 2013.

\bibitem[B{\etalchar{+}}]{GRD}
G.~Brown et~al.
\newblock Graded {R}ing {D}atabase.
\newblock \url{http://www.grdb.co.uk/}.

\bibitem[Bay94]{MR1268578}
L.~Bayle.
\newblock Classification des vari\'{e}t\'{e}s complexes projectives de
  dimension trois dont une section hyperplane g\'{e}n\'{e}rale est une surface
  d'{E}nriques.
\newblock {\em J. Reine Angew. Math.}, 449:9--63, 1994.

\bibitem[BCHM10]{MR2601039}
C.~Birkar, P.~Cascini, C.~D. Hacon, and J.~McKernan.
\newblock Existence of minimal models for varieties of log general type.
\newblock {\em J. Amer. Math. Soc.}, 23(2):405--468, 2010.

\bibitem[Bir21]{MR4224714}
C.~Birkar.
\newblock Singularities of linear systems and boundedness of {F}ano varieties.
\newblock {\em Ann. of Math. (2)}, 193(2):347--405, 2021.

\bibitem[Blu10]{MR2564828}
M.~Blunk.
\newblock Del {P}ezzo surfaces of degree 6 over an arbitrary field.
\newblock {\em J. Algebra}, 323(1):42--58, 2010.

\bibitem[Bor91]{MR1102012}
A.~Borel.
\newblock {\em Linear algebraic groups}, volume 126 of {\em Graduate Texts in
  Mathematics}.
\newblock Springer-Verlag, New York, second edition, 1991.

\bibitem[BS07]{MR2374989}
G.~Brown and K.~Suzuki.
\newblock Computing certain {F}ano 3-folds.
\newblock {\em Japan J. Indust. Appl. Math.}, 24(3):241--250, 2007.

\bibitem[Cam92]{MR1191735}
F.~Campana.
\newblock Connexit\'e rationnelle des vari\'et\'es de {F}ano.
\newblock {\em Ann. Sci. \'Ecole Norm. Sup. (4)}, 25(5):539--545, 1992.

\bibitem[Cox95]{MR1299003}
D.~A. Cox.
\newblock The homogeneous coordinate ring of a toric variety.
\newblock {\em J. Algebraic Geom.}, 4(1):17--50, 1995.

\bibitem[CPP02]{MR1937041}
F.~Campana, T.~Peternell, and A.~V. Pukhlikov.
\newblock The generalized {T}sen theorem and rationally connected {F}ano
  fibrations.
\newblock {\em Mat. Sb.}, 193(10):49--74, 2002.

\bibitem[CT87]{MR934267}
J.-L. Colliot-Th{\'e}l{\`e}ne.
\newblock Arithm\'etique des vari\'et\'es rationnelles et probl\`emes
  birationnels.
\newblock In {\em Proceedings of the {I}nternational {C}ongress of
  {M}athematicians, {V}ol. 1, 2 ({B}erkeley, {C}alif., 1986)}, pages 641--653.
  Amer. Math. Soc., Providence, RI, 1987.

\bibitem[CT11]{MR2757627}
J.-L. Colliot-Th\'el\`ene.
\newblock Vari\'et\'es presque rationnelles, leurs points rationnels et leurs
  d\'eg\'en\'erescences.
\newblock In {\em Arithmetic geometry}, volume 2009 of {\em Lecture Notes in
  Math.}, pages 1--44. Springer, Berlin, 2011.

\bibitem[CTS87]{MR899402}
J.-L. Colliot-Th{\'e}l{\`e}ne and J.-J. Sansuc.
\newblock La descente sur les vari\'et\'es rationnelles. {II}.
\newblock {\em Duke Math. J.}, 54(2):375--492, 1987.

\bibitem[dFF13]{MR3031573}
T.~de~Fernex and D.~Fusi.
\newblock Rationality in families of threefolds.
\newblock {\em Rend. Circ. Mat. Palermo (2)}, 62(1):127--135, 2013.

\bibitem[dJS03]{MR1981034}
A.~J. de~Jong and J.~Starr.
\newblock Every rationally connected variety over the function field of a curve
  has a rational point.
\newblock {\em Amer. J. Math.}, 125(3):567--580, 2003.

\bibitem[DK17]{MR3760308}
B.~Duesler and A.~Knecht.
\newblock Rationally connected varieties over the maximally unramified
  extension of {$p$}-adic fields.
\newblock {\em Rocky Mountain J. Math.}, 47(8):2605--2617, 2017.

\bibitem[DKT99]{MR1743370}
S.~Ding, M.-C. Kang, and E.-T. Tan.
\newblock Chiungtze {C}. {T}sen (1898--1940) and {T}sen's theorems.
\newblock {\em Rocky Mountain J. Math.}, 29(4):1237--1269, 1999.

\bibitem[EKM08]{MR2427530}
R.~Elman, N.~Karpenko, and A.~Merkurjev.
\newblock {\em The algebraic and geometric theory of quadratic forms},
  volume~56 of {\em American Mathematical Society Colloquium Publications}.
\newblock American Mathematical Society, Providence, RI, 2008.

\bibitem[ELFST14]{MR3163576}
E.~J. Elizondo, P.~Lima-Filho, F.~Sottile, and Z.~Teitler.
\newblock Arithmetic toric varieties.
\newblock {\em Math. Nachr.}, 287(2-3):216--241, 2014.

\bibitem[ESB89]{MR1020829}
L.~Ein and N.~Shepherd-Barron.
\newblock Some special {C}remona transformations.
\newblock {\em Amer. J. Math.}, 111(5):783--800, 1989.

\bibitem[Esn03]{MR1943746}
H.~Esnault.
\newblock Varieties over a finite field with trivial {C}how group of 0-cycles
  have a rational point.
\newblock {\em Invent. Math.}, 151(1):187--191, 2003.

\bibitem[Fuj82]{MR664549}
T.~Fujita.
\newblock Classification of projective varieties of {$\Delta $}-genus one.
\newblock {\em Proc. Japan Acad. Ser. A Math. Sci.}, 58(3):113--116, 1982.

\bibitem[GHS03]{MR1937199}
T.~Graber, J.~Harris, and J.~Starr.
\newblock Families of rationally connected varieties.
\newblock {\em J. Amer. Math. Soc.}, 16(1):57--67 (electronic), 2003.

\bibitem[GNT19]{arXiv:1512.05003}
Y.~Gongyo, Y.~Nakamura, and H.~Tanaka.
\newblock Rational points on log {F}ano threefolds over a finite field.
\newblock {\em J. Eur. Math. Soc.}, electronically published on August 13,
  2019.

\bibitem[GS06]{MR2266528}
P.~Gille and T.~Szamuely.
\newblock {\em Central simple algebras and {G}alois cohomology}, volume 101 of
  {\em Cambridge Studies in Advanced Mathematics}.
\newblock Cambridge University Press, Cambridge, 2006.

\bibitem[Gui14]{arXiv:1407.6945}
R.~Guilbot.
\newblock Low degree hypersurfaces of projective toric varieties defined over
  {$C_1$} fields have a rational point.
\newblock arXiv:1407.6945, 2014.

\bibitem[Har70]{MR0282977}
R.~Hartshorne.
\newblock {\em Ample subvarieties of algebraic varieties}.
\newblock Lecture Notes in Mathematics, Vol. 156. Springer-Verlag, Berlin-New
  York, 1970.
\newblock Notes written in collaboration with C. Musili.

\bibitem[Har77]{MR0463157}
R.~Hartshorne.
\newblock {\em Algebraic geometry}.
\newblock Springer-Verlag, New York, 1977.
\newblock Graduate Texts in Mathematics, No. 52.

\bibitem[Hir64]{MR0199184}
H.~Hironaka.
\newblock Resolution of singularities of an algebraic variety over a field of
  characteristic zero. {I}, {II}.
\newblock {\em Ann. of Math. (2) {\bf 79} (1964), 109--203; ibid. (2)},
  79:205--326, 1964.

\bibitem[HX09]{MR2491906}
A.~Hogadi and C.~Xu.
\newblock Degenerations of rationally connected varieties.
\newblock {\em Trans. Amer. Math. Soc.}, 361(7):3931--3949, 2009.

\bibitem[IP99]{MR1668579}
V.~A. Iskovskikh and Yu.~G. Prokhorov.
\newblock Fano varieties.
\newblock In {\em Algebraic geometry, {V}}, volume~47 of {\em Encyclopaedia
  Math. Sci.}, pages 1--247. Springer, Berlin, 1999.

\bibitem[Isk79]{MR525940}
V.~A. Iskovskih.
\newblock Minimal models of rational surfaces over arbitrary fields.
\newblock {\em Izv. Akad. Nauk SSSR Ser. Mat.}, 43(1):19--43, 237, 1979.

\bibitem[JR11]{MR2860279}
P.~Jahnke and I.~Radloff.
\newblock Terminal {F}ano threefolds and their smoothings.
\newblock {\em Math. Z.}, 269(3-4):1129--1136, 2011.

\bibitem[Kle98]{MR1661859}
S.~L. Kleiman.
\newblock Bertini and his two fundamental theorems.
\newblock {\em Rend. Circ. Mat. Palermo (2) Suppl.}, (55):9--37, 1998.
\newblock Studies in the history of modern mathematics, III.

\bibitem[KM98]{MR1658959}
J.~Koll{\'a}r and S.~Mori.
\newblock {\em Birational geometry of algebraic varieties}, volume 134 of {\em
  Cambridge Tracts in Mathematics}.
\newblock Cambridge University Press, Cambridge, 1998.
\newblock With the collaboration of C. H. Clemens and A. Corti, Translated from
  the 1998 Japanese original.

\bibitem[KMM92]{MR1189503}
J.~Koll{\'a}r, Y.~Miyaoka, and S.~Mori.
\newblock Rational connectedness and boundedness of {F}ano manifolds.
\newblock {\em J. Differential Geom.}, 36(3):765--779, 1992.

\bibitem[Kol96]{MR1440180}
J.~Koll{\'a}r.
\newblock {\em Rational curves on algebraic varieties}, volume~32 of {\em
  Ergebnisse der Mathematik und ihrer Grenzgebiete. 3. Folge. A Series of
  Modern Surveys in Mathematics [Results in Mathematics and Related Areas. 3rd
  Series. A Series of Modern Surveys in Mathematics]}.
\newblock Springer-Verlag, Berlin, 1996.

\bibitem[Kol97]{MR1492525}
J.~Koll{\'a}r.
\newblock Singularities of pairs.
\newblock In {\em Algebraic geometry---{S}anta {C}ruz 1995}, volume~62 of {\em
  Proc. Sympos. Pure Math.}, pages 221--287. Amer. Math. Soc., Providence, RI,
  1997.

\bibitem[Kol13]{MR3057950}
J.~Koll{\'a}r.
\newblock {\em Singularities of the minimal model program}, volume 200 of {\em
  Cambridge Tracts in Mathematics}.
\newblock Cambridge University Press, Cambridge, 2013.
\newblock With a collaboration of S{\'a}ndor Kov{\'a}cs.

\bibitem[Kol16]{arXiv:1606.04368v1}
J.~Koll\'ar.
\newblock {S}everi-{B}rauer varieties; a geometric treatment.
\newblock arXiv:1606.04368v1, 2016.

\bibitem[KPS18]{MR3776469}
A.~G. Kuznetsov, Y.~G. Prokhorov, and C.~A. Shramov.
\newblock Hilbert schemes of lines and conics and automorphism groups of {F}ano
  threefolds.
\newblock {\em Jpn. J. Math.}, 13(1):109--185, 2018.

\bibitem[Kuz18]{MR3833474}
A.~G. Kuznetsov.
\newblock On linear sections of the spinor tenfold. {I}.
\newblock {\em Izv. Ross. Akad. Nauk Ser. Mat.}, 82(4):53--114, 2018.

\bibitem[Lam05]{MR2104929}
T.~Y. Lam.
\newblock {\em Introduction to quadratic forms over fields}, volume~67 of {\em
  Graduate Studies in Mathematics}.
\newblock American Mathematical Society, Providence, RI, 2005.

\bibitem[Lan52]{MR0046388}
S.~Lang.
\newblock On quasi algebraic closure.
\newblock {\em Ann. of Math. (2)}, 55:373--390, 1952.

\bibitem[Lie17]{MR3644253}
C.~Liedtke.
\newblock Morphisms to {B}rauer-{S}everi varieties, with applications to del
  {P}ezzo surfaces.
\newblock In {\em Geometry over nonclosed fields}, Simons Symp., pages
  157--196. Springer, Cham, 2017.

\bibitem[Man66]{MR225780}
Y.~I. Manin.
\newblock Rational surfaces over perfect fields.
\newblock {\em Inst. Hautes \'{E}tudes Sci. Publ. Math.}, (30):55--113, 1966.

\bibitem[Mel99]{MR1675146}
M.~Mella.
\newblock Existence of good divisors on {M}ukai varieties.
\newblock {\em J. Algebraic Geom.}, 8(2):197--206, 1999.

\bibitem[MGSdSSdS18]{arXiv:1809.09070v1}
J.~P. Moreno~Gonzalez, C.~Sancho~de Salas, and M.~T. Sancho~de Salas.
\newblock Automorphism group of a toric variety.
\newblock arXiv:1809.09070v1, 2018.

\bibitem[Muk95]{MR1363081}
S.~Mukai.
\newblock Curves and symmetric spaces. {I}.
\newblock {\em Amer. J. Math.}, 117(6):1627--1644, 1995.

\bibitem[Muk02]{MR1944132}
S.~Mukai.
\newblock New developments in the theory of {F}ano threefolds: vector bundle
  method and moduli problems [translation of {S}\=ugaku {\bf 47} (1995), no.\
  2, 125--144; {MR}1364825 (96m:14059)].
\newblock {\em Sugaku Expositions}, 15(2):125--150, 2002.
\newblock Sugaku expositions.

\bibitem[Nam97]{MR1489117}
Y.~Namikawa.
\newblock Smoothing {F}ano {$3$}-folds.
\newblock {\em J. Algebraic Geom.}, 6(2):307--324, 1997.

\bibitem[Nis55]{MR0095851}
H.~Nishimura.
\newblock Some remarks on rational points.
\newblock {\em Mem. Coll. Sci. Univ. Kyoto. Ser. A. Math.}, 29:189--192, 1955.

\bibitem[PCS05]{MR2136260}
V.~V. Przyjalkowski, I.~A. Cheltsov, and K.~A. Shramov.
\newblock Hyperelliptic and trigonal {F}ano threefolds.
\newblock {\em Izv. Ross. Akad. Nauk Ser. Mat.}, 69(2):145--204, 2005.

\bibitem[Pie19]{forms_ci}
M.~Pieropan.
\newblock On {G}alois descent of complete intersections.
\newblock {\em Math. Res. Lett., to appear}, arXiv:1905.00227v2, 2019.

\bibitem[Pop96]{MR1405941}
F.~Pop.
\newblock Embedding problems over large fields.
\newblock {\em Ann. of Math. (2)}, 144(1):1--34, 1996.

\bibitem[Pro04]{MR2112596}
Y.~G. Prokhorov.
\newblock A remark on {F}ano threefolds with canonical {G}orenstein
  singularities.
\newblock In {\em The {F}ano {C}onference}, pages 647--657. Univ. Torino,
  Turin, 2004.

\bibitem[Pro07]{MR2374388}
Y.~G. Prokhorov.
\newblock The degree of {$\Bbb Q$}-{F}ano threefolds.
\newblock {\em Mat. Sb.}, 198(11):153--174, 2007.

\bibitem[Pro10]{MR2745685}
Y.~Prokhorov.
\newblock {$\Bbb Q$}-{F}ano threefolds of large {F}ano index, {I}.
\newblock {\em Doc. Math.}, 15:843--872, 2010.

\bibitem[Pro13a]{MR3100917}
Y.~Prokhorov.
\newblock {$G$}-{F}ano threefolds, {I}.
\newblock {\em Adv. Geom.}, 13(3):389--418, 2013.

\bibitem[Pro13b]{MR3100918}
Y.~Prokhorov.
\newblock {$G$}-{F}ano threefolds, {II}.
\newblock {\em Adv. Geom.}, 13(3):419--434, 2013.

\bibitem[Pro13c]{MR3098794}
Yu.~G. Prokhorov.
\newblock On birational involutions of {$\Bbb P^3$}.
\newblock {\em Izv. Ross. Akad. Nauk Ser. Mat.}, 77(3):199--222, 2013.

\bibitem[Pro16]{MR3535377}
Y.~G. Prokhorov.
\newblock Singular {F}ano manifolds of genus 12.
\newblock {\em Mat. Sb.}, 207(7):101--130, 2016.

\bibitem[PS16]{MR3483470}
Y.~Prokhorov and C.~Shramov.
\newblock Jordan property for {C}remona groups.
\newblock {\em Amer. J. Math.}, 138(2):403--418, 2016.

\bibitem[PS17]{MR3711004}
Y.~Prokhorov and C.~Shramov.
\newblock Jordan constant for {C}remona group of rank 3.
\newblock {\em Mosc. Math. J.}, 17(3):457--509, 2017.

\bibitem[Rei87]{MR927963}
M.~Reid.
\newblock Young person's guide to canonical singularities.
\newblock In {\em Algebraic geometry, {B}owdoin, 1985 ({B}runswick, {M}aine,
  1985)}, volume~46 of {\em Proc. Sympos. Pure Math.}, pages 345--414. Amer.
  Math. Soc., Providence, RI, 1987.

\bibitem[RS00]{MR1780430}
K.~Ranestad and F.-O. Schreyer.
\newblock Varieties of sums of powers.
\newblock {\em J. Reine Angew. Math.}, 525:147--181, 2000.

\bibitem[San95]{MR1317287}
T.~Sano.
\newblock On classifications of non-{G}orenstein {${\bf Q}$}-{F}ano {$3$}-folds
  of {F}ano index {$1$}.
\newblock {\em J. Math. Soc. Japan}, 47(2):369--380, 1995.

\bibitem[San96]{MR1399470}
T.~Sano.
\newblock Classification of non-{G}orenstein {${\bf Q}$}-{F}ano {$d$}-folds of
  {F}ano index greater than {$d-2$}.
\newblock {\em Nagoya Math. J.}, 142:133--143, 1996.

\bibitem[SD72]{MR0376684}
H.~P.~F. Swinnerton-Dyer.
\newblock Rational points on del {P}ezzo surfaces of degree {$5$}.
\newblock In {\em Algebraic geometry, {O}slo 1970 ({P}roc. {F}ifth {N}ordic
  {S}ummer {S}chool in {M}ath.)}, pages 287--290, 1972.

\bibitem[Ser02]{MR1867431}
J.-P. Serre.
\newblock {\em Galois cohomology}.
\newblock Springer Monographs in Mathematics. Springer-Verlag, Berlin, english
  edition, 2002.

\bibitem[Ser06]{MR2179691}
J.-P. Serre.
\newblock {\em Lie algebras and {L}ie groups}, volume 1500 of {\em Lecture
  Notes in Mathematics}.
\newblock Springer-Verlag, Berlin, 2006.
\newblock 1964 lectures given at Harvard University, Corrected fifth printing
  of the second (1992) edition.

\bibitem[Shi89]{MR1030501}
K.-H. Shin.
\newblock {$3$}-dimensional {F}ano varieties with canonical singularities.
\newblock {\em Tokyo J. Math.}, 12(2):375--385, 1989.

\bibitem[Sko93]{MR1260765}
A.~N. Skorobogatov.
\newblock On a theorem of {E}nriques-{S}winnerton-{D}yer.
\newblock {\em Ann. Fac. Sci. Toulouse Math. (6)}, 2(3):429--440, 1993.

\bibitem[Suz04]{MR2067795}
K.~Suzuki.
\newblock On {F}ano indices of {$\Bbb Q$}-{F}ano 3-folds.
\newblock {\em Manuscripta Math.}, 114(2):229--246, 2004.

\bibitem[{The}19]{sage}
{The Sage Developers}.
\newblock {\em {S}ageMath, the {S}age {M}athematics {S}oftware {S}ystem
  ({V}ersion 8.6)}, 2019.
\newblock \url{https://www.sagemath.org}.

\bibitem[Ure18]{MR3795470}
C.~Urech.
\newblock On homomorphisms between {C}remona groups.
\newblock {\em Ann. Inst. Fourier (Grenoble)}, 68(1):53--100, 2018.

\bibitem[VA13]{MR3114932}
A.~V\'{a}rilly-Alvarado.
\newblock Arithmetic of del {P}ezzo surfaces.
\newblock In {\em Birational geometry, rational curves, and arithmetic}, Simons
  Symp., pages 293--319. Springer, Cham, 2013.

\bibitem[Vos82]{MR651645}
V.~E. Voskresenski{\u\i}.
\newblock Projective invariant {D}emazure models.
\newblock {\em Izv. Akad. Nauk SSSR Ser. Mat.}, 46(2):195--210, 431, 1982.

\bibitem[Zha06]{MR2208131}
Qi~Zhang.
\newblock Rational connectedness of log {${\bf Q}$}-{F}ano varieties.
\newblock {\em J. Reine Angew. Math.}, 590:131--142, 2006.

\end{thebibliography}

\end{document}